\documentclass[11pt]{amsart}

\usepackage[utf8]{inputenc}
\usepackage{lmodern}
\usepackage{microtype}
\usepackage{amsmath,amssymb,amsthm,mathtools}
\usepackage{enumitem}
\usepackage{booktabs}
\usepackage{graphicx}
\usepackage{placeins}
\usepackage{tikz}
\usetikzlibrary{
  arrows.meta,
  backgrounds,
  calc,
  fit,
  matrix,
  positioning,
  shapes.geometric
}
\usepackage[margin=1.1in]{geometry}
\usepackage[colorlinks=true,linkcolor=blue,citecolor=blue,urlcolor=blue]{hyperref}

\definecolor{oqrwblue}{RGB}{44,95,145}
\definecolor{oqrwbluefill}{RGB}{226,238,249}
\definecolor{oqrwgreen}{RGB}{45,120,83}
\definecolor{oqrwgreenfill}{RGB}{227,244,235}
\definecolor{oqrwred}{RGB}{166,70,60}
\definecolor{oqrwredfill}{RGB}{250,233,230}
\definecolor{oqrwgold}{RGB}{167,119,31}
\definecolor{oqrwgoldfill}{RGB}{252,243,220}
\definecolor{oqrwgray}{RGB}{80,87,94}
\definecolor{oqrwgrayfill}{RGB}{240,242,244}

\tikzset{
  oqrw arrow/.style={-{Latex[length=2.2mm]},thick,draw=oqrwgray},
  oqrw dashed arrow/.style={-{Latex[length=2.2mm]},thick,dashed,draw=oqrwgray},
  oqrw base/.style={
    draw=oqrwgray,
    rounded corners=2pt,
    thick,
    align=center,
    inner sep=5pt,
    font=\small
  },
  oqrw blue/.style={oqrw base,draw=oqrwblue,fill=oqrwbluefill},
  oqrw green/.style={oqrw base,draw=oqrwgreen,fill=oqrwgreenfill},
  oqrw red/.style={oqrw base,draw=oqrwred,fill=oqrwredfill},
  oqrw gold/.style={oqrw base,draw=oqrwgold,fill=oqrwgoldfill},
  oqrw gray/.style={oqrw base,fill=oqrwgrayfill}
}

\allowdisplaybreaks
\numberwithin{equation}{section}

\theoremstyle{plain}
\newtheorem{theorem}{Theorem}[section]
\newtheorem{proposition}[theorem]{Proposition}
\newtheorem{lemma}[theorem]{Lemma}
\newtheorem{corollary}[theorem]{Corollary}
\newtheorem{assumption}[theorem]{Assumption}
\theoremstyle{definition}
\newtheorem{definition}[theorem]{Definition}
\newtheorem{example}[theorem]{Example}
\theoremstyle{remark}
\newtheorem{remark}[theorem]{Remark}

\newcommand{\C}{\mathbb C}
\newcommand{\R}{\mathbb R}
\newcommand{\Z}{\mathbb Z}
\newcommand{\E}{\mathbb E}
\newcommand{\PP}{\mathbb P}
\newcommand{\cH}{\mathcal H}
\newcommand{\cK}{\mathcal K}
\newcommand{\cL}{\mathcal L}
\newcommand{\cR}{\mathcal R}

\newcommand{\algA}{\mathfrak A}
\newcommand{\Tr}{\operatorname{Tr}}
\newcommand{\Ran}{\operatorname{Ran}}
\newcommand{\supp}{\operatorname{supp}}
\newcommand{\rank}{\operatorname{rank}}
\newcommand{\spec}{\operatorname{spec}}
\newcommand{\diag}{\operatorname{diag}}
\newcommand{\norm}[1]{\left\lVert #1\right\rVert}
\newcommand{\abs}[1]{\left\lvert #1\right\rvert}
\newcommand{\ip}[2]{\left\langle #1,#2\right\rangle}

\title[Ballistic speed and recurrence for OQRWs]
{Ballistic Speed and Potential-Theoretic Recurrence\\
for Homogeneous Open Quantum Random Walks}
\author{Ameur Dhahri}
\address{Dipartimento di Matematica, Politecnico di Milano,
Piazza Leonardo da Vinci 32, I-20133 Milan, Italy}
\email{ameur.dhahri@polimi.it}

\author{Chul Ki Ko}
\address{University College, Yonsei University, 85 Songdogwahak-ro,
Yeonsu-gu, Incheon 21983, Korea}
\email{kochulki@yonsei.ac.kr}

\author{Farrukh Mukhamedov}
\address{Department of Mathematical Sciences, United Arab Emirates
University, P.O. Box No.~15551, Al Ain, Abu Dhabi, United Arab Emirates}
\email{far75m@yandex.ru; farrukh.m@uaeu.ac.ae}

\author{Hyun Jae Yoo}
\address{Department of Applied Mathematics and Institute for Integrated
Mathematical Sciences, Hankyong National University, 327 Jungang-ro,
Anseong-si, Gyeonggi-do 17579, Korea}
\email{yoohj@hknu.ac.kr}
\thanks{Corresponding author: Farrukh Mukhamedov}
\date{}

\subjclass[2020]{81P45, 60F05, 60J10, 47D07}
\keywords{open quantum random walk, transition operation matrix, periodic local limit theorem,
Green function, potential kernel, recurrence, transience, ballistic speed}

\hypersetup{
  pdftitle={Ballistic Speed and Potential-Theoretic Recurrence for Homogeneous Open Quantum Random Walks},
  pdfauthor={Ameur Dhahri, Chul Ki Ko, Farrukh Mukhamedov, Hyun Jae Yoo},
  pdfsubject={Open quantum random walks},
  pdfkeywords={OQRW, TOM, periodic local limit theorem, Green function, potential kernel,
  recurrence, transience, ballistic speed}
}

\begin{document}

\begin{abstract}
Our study connects root-mean-square ballistic transport to potential-theoretic recurrence for finite-range open quantum random walks.  In homogeneous walks with primitive local channels, finitely many simple periodic Fourier peripheral eigenvalues, no nonzero stationary Fourier mode, and a nondegenerate quadratic spectral term, we prove a periodic uniform local limit theorem and recover exponential finite-set return bounds for nonzero drift.  The centered case yields strong Green-function asymptotics in at least three dimensions, as well as potential-kernel asymptotics in one and two dimensions.  According to these assumptions, the RMS ballistic speed equals the drift norm, nonzero drift indicates transience, and centered walks are recurring in effective dimensions one and two but transitory in higher dimensions.  The low-dimensional finding shows a recurrence of the origin projection in TOM.  For reducible walks, an explicit harmonic \(h\)-transform converts each absorption component to an OQRW, providing a detailed breakdown of Green occupation potentials.  The squared RMS speed is the absorption-weighted mean of squared component drifts, but the reduced drift-dimension classification also needs specific component return estimations.  A centered noncommuting family validates the fundamental spectral assumptions in all dimensions and provides explicit potential constants.  Exact finite traps and sparse reflecting barriers provide a complementary nonhomogeneous method for zero speed and TOM recurrence.
\end{abstract}

\maketitle

\section{Introduction}

Quantum random walks are noncommutative models of transport that link spatial motion to an internal quantum degree of freedom.  According to the unitary theory, this linkage occurs by reversible coin-position development.  Open quantum random walks (OQRWs), developed by Attal--Petruccione--Sabot--Sinayskiy \cite{APSS12}, replace the unitary mechanism with entirely positive operations, making them suitable for dissipative dynamics, repetitive interactions, and quantum systems with external noise.  The position variable remains lattice-valued, but the probability of the next displacement is determined by the current internal density operator.  This trait explains both their versatility and technological challenges.

An OQRW is not a traditional Markov chain on the position space.
The quantum trajectory \((X_n,\rho_n)\) combines the position and normalized internal state to construct a Markov process. The marginal process \((X_n)\) does not necessarily need to be Markovian.  Basic questions for a random walk must consider the development of the internal state.  Limit theorems show that drift and covariance are dictated by the spectral and ergodic features of an auxiliary quantum channel, not a fixed scalar transition kernel.  Early limit results were obtained by Konno--Yoo \cite{KonnoYoo13}. Laws of large numbers and central limit theorems were later established on integer lattices by Attal--Guillotin-Plantard--Sabot \cite{AGPS15}, on crystal lattices by Ko--Konno--Segawa--Yoo \cite{KKSY19}, and, without irreducibility of the local channel, by Carbone--Girotti--Hernandez \cite{CGH22}.

Carbone--Pautrat \cite{CarbonePautrat15} proved a central limit theorem and a large-deviation principle for finite-dimensional homogeneous OQRWs with irreducible local channels, utilizing real channel deformations.  Exponential concentration around the drift implies finite-set transience when the drift is not zero.  We provide a uniform spectral argument to support our local theorem.  This paper focuses on periodic local probabilities, quantitative Green-function and potential-kernel asymptotics, and their impact on the transition operation matrix (TOM) recurrence.  These need knowledge beyond macroscopic boundaries and significant variations.

Ballistic transport is a well-known concept in the unitary quantum-walk literature.  Konno \cite{Konno05} found non-Gaussian limits for position divided by time in one-dimensional defined walks, whereas Grimmett-Janson-Scudo \cite{GrimmettJansonScudo04} verified ballistic weak-limit theorems using Fourier techniques in one and higher dimensions.  Ahlbrecht--Vogts--Werner--Werner \cite{AhlbrechtEtAl11} identified the limiting velocity distribution for translation-invariant finite-range unitary walks using dispersion band group velocities.
The position distribution is collected following unitary development, without any intermediate observations.  The ballistic limit may be non-degenerate, resulting in spatial dispersion on the scale \(n\) even when the mean velocity is zero.  The symmetric Hadamard walk in \cite{Konno05} is a simple example.  Positive RMS ballistic speed does not always indicate nonzero deterministic drift in the unitary situation.

Noise may impact the behavior of the model and its mixing assumptions, resulting in different conclusions.  Brun-Carteret-Ambainis \cite{BrunCarteretAmbainis03} computed position moments for a coined walk with coin decoherence and found an asymptotically linear variance for the decohering model.  Ahlbrecht et al. (2011) propose deterministic ballistic drift and momentum-dependent Gaussian fluctuation laws for Markov-controlled random unitary coins. However, averaging across momentum does not always result in a single Gaussian.  These models differ from OQRWs in that OQRW evolution retains position-diagonal states and follows a measured trajectory, whereas random-unitary or coin-only noise may maintain spatial coherences.

The TOM formalism provides an operator-theoretic framework for comparing these dynamics to traditional Markov chains.  A TOM, as defined by Gudder \cite{Gudder08}, substitutes the elements of a stochastic matrix with entirely positive mappings, and conserves total trace instead of total probability.  OQRWs are a natural Kraus-type subclass.  Using the dual Heisenberg evolution results in a normal unital quantum Markov operator for the diagonal observable algebra.
This perspective has been used to link OQRWs with quantum Markov chains \cite{DhahriKoYoo19,DhahriMukhamedov19,DM19} and to create a potential theory for QRWs with general TOMs \cite{DhahriEtAl26}. Related potential-theoretic constructions for quantum Markov chains can be found in \cite{DhahriFagnola23}.

The choice of quantum formalism has a particularly strong impact on recurring events.
Measurement-based techniques monitor a site or subspace and analyze the earliest returns of halted evolution.  Occupation-based techniques calculate the likelihood of locating a walker in a target set at certain times.
Operator-theoretic techniques investigate the potential of a projection under unmonitored Heisenberg evolution.  Although these concepts are connected in particular models, they do not always align due to the impact of monitoring on dynamics.  In \cite{CarvalhoGuidiLardizabal17} it was explored site recurrence for open and unitary walks on the line, whereas \cite{JacqLardizabal21} created stricter criteria for homogenous OQRWs on the line.  Recurrence has been studied using quantum Markov chains \cite{DhahriKoYoo19,DhahriMukhamedov19} and Schur-function and splitting approaches \cite{GrunbaumLardizabalVelazquez20}.  The TOM potential in this study belongs to the unmonitored operator-theoretic side of the picture, whereas our spatial Green series is directly linked to the measured position law.

The second problem is reducibility.  The local channel may have many recurrent enclosures, resulting in distinct drifts and covariances based on the initial internal state.  In \cite{CarbonePautrat16} it has been methodically explored the reducibility, period, and ergodic features of OQRWs. the generalized limit theorem of \cite{CGH22} shows
that the asymptotic position law may be a mixture of component laws rather
than a single Gaussian regime.  Related OQRW--quantum-Markov-chain models on
trees exhibit further phase-transition and recurrence phenomena
\cite{MukhamedovSouissiHamdi22,MukhamedovSouissiHamdiAndolsi23}.  These results
make clear that a single averaged drift need not capture the transport of a
reducible walk.

Existing theories leave a distinct gap.  General TOM potential theory establishes operator conditions for recurrence, transience, and reducibility \cite{DhahriEtAl26}, but does not include Fourier drift, covariance, or effective lattice in a homogeneous OQRW.
While laws of large numbers and central limit theorems may characterize macroscopic scaling, they cannot regulate the local probabilities that make up the Green potential.  Weak convergence is insufficient to determine the convergence of \(\sum_n\PP(X_n\in K)\).
The current approach connects two levels: Fourier spectrum analysis provides uniform local estimates, TOM duality translates these into potential-theoretic assertions, and absorption theory provides accurate potential identities for reducible dynamics.  To achieve a reducible classification based on component drifts and lattice rankings, component return estimates must be supplied in Assumption~\ref{ass:component-return}.

Against this background, the present paper asks how macroscopic ballistic
transport is related to potential-theoretic recurrence for homogeneous
finite-range OQRWs.  The transport quantity considered here is the
root-mean-square (RMS) ballistic speed
\[
  v_{\mathrm{bal}}
  =\lim_{n\to\infty}\frac{1}{n}
    \bigl(\E|X_n|^2\bigr)^{1/2},
\]
when the limit exists.  In a one-component law-of-large-numbers regime this
quantity contains no information beyond the drift: bounded jumps imply
\(v_{\mathrm{bal}}=|m|\).  The recurrence question is different.
For a specified initial state it is governed by the Green occupation series
\[
  \sum_{n\geq0}\PP(X_n\in K)
  =\Tr\!\left(\rho^{(0)}\sum_{n\geq0}T^n(P_K)\right),
\]
where \(T\) is the Heisenberg-picture transition operation matrix (TOM)
operator and \(P_K\) is the position projection of a finite set \(K\).
This identity links measured spatial occupation to a TOM potential, but it
does not reduce recurrence to a second-moment growth rate.  Drift, covariance,
effective lattice dimension, internal mixing, and access to recurrent
components all play different roles.

Recent exponential-tail findings provide a further important comparison.
Cedzich--Joye--Werner--Werner \cite{CedzichEtAl26} use analytic deformations to limit propagation tails in translation-invariant unitary lattice dynamics with compactly supported starting states.  The positive large-deviation rate applies outside the closed convex hull of the group-velocity zone, but does not cover nonconvex gaps inside the hull.  Our real deformation of the OQRW channel provides exponential probability for accessing a specified finite set when the unique law-of-large-numbers velocity is nonzero.  Our nondegenerate spectral assumptions lead to polynomial centered return estimations, necessitating a local Fourier analysis (see below).  A positive unitary RMS speed alone cannot provide the same conclusion since zero may still be inside the velocity range.

The difference between these transport findings lies in both the dynamics and the quantities being assessed.  The ballistic weak limits in \cite{Konno05,GrimmettJansonScudo04,AhlbrechtEtAl11} characterize macroscopic position distributions, while the noise analysis in \cite{BrunCarteretAmbainis03,AhlbrechtEtAl11} addresses drift and spreading, and \cite{CedzichEtAl26} manages propagation tails.  This study presents uniform local return estimates, summability, and spatial Green-function and potential-kernel asymptotics for OQRWs with explicit spectral assumptions.  It also links these estimates to TOM recurrence and precise absorption-component decompositions.  Fourier perturbation and exponential deformation are not new methodologies, but rather common instruments that contribute to local and potential-theoretical findings.  Remark ~\ref{rem:ballistic-velocity-laws} clarifies the distinction between RMS transport and centered ballistic spreading.

More precisely, the contributions are as follows.
\begin{enumerate}[label=\textup{(N\arabic*)},leftmargin=2.8em]
\item We prove a \emph{periodic uniform} Fourier local limit theorem under
explicit primitive, finite-peripheral-set, and nondegeneracy assumptions.
Every unit-modulus Fourier point contributes an explicit oscillatory term,
so no strong-aperiodicity hypothesis is imposed.  The resulting periodic
mask has mean one and yields sharp cumulative return asymptotics in effective
dimensions one and two.  A real Laplace deformation, treated in the same
spectral framework, yields exponential return estimates for every finite set
when the drift is nonzero.

\item We identify the correct spatial parameter as the effective lattice
generated by the active jumps, rather than the ambient lattice.  Combining
the local theorem with a law of large numbers proves both
\(v_{\mathrm{bal}}=|m|\) and the sharp dichotomy
\[
\begin{array}{c|c}
  \text{regime} & \text{spatial behavior}\\
  \hline
  m\neq0 & \text{transient in every effective dimension},\\
  m=0,\ r\leq2 & \text{recurrent at the origin},\\
  m=0,\ r\geq3 & \text{transient}.
\end{array}
\]
Here \(r\) is the rank of the effective lattice.  This separates covariance
degeneracy caused by a lower-dimensional support from genuine
higher-dimensional transience.  In addition, the local theorem upgrades
state-dependent recurrence to TOM recurrence of \(P_0\): for every nonzero
internal vector, the potential of \(P_0\) diverges on the corresponding
vector state.  In the centered case, Fourier resolvent analysis gives the quantitative refinements
\[
  G_{\rho_0}(x)\asymp
  (x^T\Sigma^{-1}x)^{1-r/2}\quad(r\geq3),
\]
with the exact leading constant, and the corresponding linear and logarithmic
potential-kernel asymptotics for \(r=1\) and \(r=2\).

\item For reducible OQRWs, we pass from the absorption measures of
Carbone--Girotti--Hernandez to exact potential identities.  An explicit
harmonic \(h\)-transform realizes every accessible absorption component as a
normalized OQRW, so its Fourier family and effective lattice are defined
without ambiguity.  Green occupation potentials decompose componentwise,
whereas the generalized law of large numbers gives the distinct quadratic
identity
\[
  v_{\mathrm{bal}}^2
  =\sum_{\alpha}a_\alpha|m_\alpha|^2.
\]
For a fixed finite target, its Green series diverges precisely when at least
one accessible component Green series diverges.  The drift--dimension
criterion for these components is conditional on
Assumption~\ref{ass:component-return}; the exact potential decomposition
does not need that assumption.  A sufficient condition is that every
accessible transformed Fourier family satisfy
Assumption~\ref{ass:fourier}.  Component velocity limits alone do not imply
the required local return estimates.  The speed identity also explains why
the RMS speed need not equal the norm of the averaged drift.

\item We show by explicit models that the separation is two-sided: zero RMS
speed occurs in both recurrent and transient primitive walks, and positive RMS
speed can coexist with recurrence in a reducible walk.  In particular,
Example~\ref{ex:centered-noncommuting} verifies all the spectral hypotheses
for a centered family with noncommuting Kraus operators and
internal-state-dependent jump probabilities in every effective dimension;
the Green and potential-kernel constants are explicit.  Finally, the TOM
compression identity for finite invariant sets yields a nonhomogeneous
mechanism for zero speed and projection recurrence, including partitions by
sparse reflecting barriers of unbounded spacing.
\end{enumerate}

The identities \(v_{\mathrm{bal}}=|m|\) and
\(v_{\mathrm{bal}}^2=\sum_\alpha a_\alpha|m_\alpha|^2\) are moment
consequences of the corresponding velocity limits and bounded increments;
they are not asserted as new weak-limit theorems.  Their role is to make
the comparison with Green potentials explicit within the local-to-global
classification above.

We use two notions of recurrence and keep them distinct.  Most of the paper
concerns state-dependent spatial recurrence, expressed by divergence of an
occupation series.  We also use recurrence of a projection in the TOM sense,
which requires its potential to diverge on every nonzero vector in the
projection range.  For \(P_0\) in centered effective dimensions one and two,
the periodic local theorem establishes this stronger property; exact traps
provide a second mechanism.  The distinction matters: a finite invariant
region implies recurrence of its set projection and zero RMS speed, but not
recurrence of every site from every internal state.

The paper is organized as follows.  Section~\ref{sec:setup} introduces TOM
potentials and RMS speed.  Section~\ref{sec:llt} proves the periodic local
limit theorem, TOM recurrence of \(P_0\), and the Green-function and
potential-kernel asymptotics.  Section~\ref{sec:dichotomy} derives the
one-component drift--dimension dichotomy.  Section~\ref{sec:components}
treats reducible walks.  Section~\ref{sec:traps} studies exact finite traps
and sparse barriers, and Section~\ref{sec:examples} gives examples separating
speed, drift, effective dimension, and recurrence.

\section{OQRWs, TOM potentials, and RMS ballistic speed}
\label{sec:setup}

\subsection{General OQRWs and their dual TOM operators}

Throughout, \(|x|\) denotes the Euclidean norm of a spatial vector in
\(\R^d\) or \(\R^r\), and the modulus of a scalar.  We use \(\norm{\cdot}\)
for Hilbert-space and operator norms, with subscripts when needed.

Let \(\cH\) be a separable Hilbert space and let
\(\{|j\rangle:j\in\Z^d\}\) be the canonical basis of
\(\ell^2(\Z^d)\).  Consider transition operators
\[
  \{B_j^i:i,j\in\Z^d\}\subset B(\cH)
\]
satisfying
\begin{equation}\label{eq:column-normalization}
  \sum_{i\in\Z^d}B_j^{i*}B_j^i=I_{\cH}
  \qquad(j\in\Z^d),
\end{equation}
with strong convergence when the sum is infinite.  On block-diagonal states
\[
  \rho=\sum_{j\in\Z^d}\rho_j\otimes|j\rangle\langle j|,
  \qquad \rho_j\geq0,
  \qquad \sum_j\Tr(\rho_j)=1,
\]
the Schr\"odinger evolution is
\begin{equation}\label{eq:general-oqrw}
  M(\rho)
  =\sum_{i\in\Z^d}
   \left(\sum_{j\in\Z^d}B_j^i\rho_jB_j^{i*}\right)
   \otimes|i\rangle\langle i|.
\end{equation}

The diagonal Heisenberg algebra is
\[
  \algA
  =\ell^\infty\!\left(\Z^d;B(\cH)\right)
  =\left\{\sum_i x_i\otimes|i\rangle\langle i|:
      \sup_i\norm{x_i}<\infty\right\}.
\]
The dual unital completely positive map \(T\colon\algA\to\algA\) is
\begin{equation}\label{eq:dual-TOM}
  T(x)
  =\sum_{j\in\Z^d}
   \left(\sum_{i\in\Z^d}B_j^{i*}x_iB_j^i\right)
   \otimes|j\rangle\langle j|,
  \qquad
  x=\sum_i x_i\otimes|i\rangle\langle i|.
\end{equation}
For a positive \(x\), each inner sum is understood as the monotone
ultraweak limit of its finite partial sums; the formula for general \(x\)
then follows by linearity.  In particular, \(T\) is a normal map.
The duality relation is
\begin{equation}\label{eq:duality}
  \Tr\bigl(M^n(\rho)x\bigr)=\Tr\bigl(\rho T^n(x)\bigr).
\end{equation}

Let
\[
  P_x:=I_{\cH}\otimes|x\rangle\langle x|,
  \qquad
  P_K:=\sum_{x\in K}P_x
\]
for \(x\in\Z^d\) and finite \(K\subset\Z^d\).  If
\(\rho^{(n)}=M^n(\rho^{(0)})\), the measured position law is
\[
  \PP_{\rho^{(0)}}(X_n=x)=\Tr(\rho_x^{(n)}).
\]
For completeness, the associated quantum trajectory starts from
\((j,\rho_j^{(0)}/\Tr(\rho_j^{(0)}))\) with probability
\(\Tr(\rho_j^{(0)})\).  From a position--internal-state pair \((j,\sigma)\),
it moves to
\[
 \left(i,\frac{B_j^i\sigma B_j^{i*}}
                 {\Tr(B_j^i\sigma B_j^{i*})}\right)
 \quad\text{with probability }\Tr(B_j^i\sigma B_j^{i*}).
\]
Zero-probability outcomes are omitted.  The normalization
\eqref{eq:column-normalization} makes this a Markov transition, and its
position marginals agree with the measured law above.

By \eqref{eq:duality},
\begin{equation}\label{eq:occupation-duality}
  \PP_{\rho^{(0)}}(X_n\in K)
  =\Tr\bigl(\rho^{(0)}T^n(P_K)\bigr).
\end{equation}

For \(a\in\algA_+\), define the potential in the extended positive cone by
\[
  U(a):=\sum_{n=0}^\infty T^n(a).
\]
For a normal state, the pairing is understood monotonically:
\[
  \Tr\bigl(\rho U(a)\bigr)
  :=\sum_{n=0}^\infty\Tr\bigl(\rho T^n(a)\bigr)
  \in[0,\infty].
\]
In particular, the Green occupation potential of a finite set is
\begin{equation}\label{eq:green-potential}
  G_{\rho^{(0)}}(K)
  :=\sum_{n=0}^\infty\PP_{\rho^{(0)}}(X_n\in K)
  =\Tr\bigl(\rho^{(0)}U(P_K)\bigr).
\end{equation}

\begin{definition}\label{def:spatial-recurrence}
The walk is \emph{spatially transient from \(\rho^{(0)}\)} if
\[
  G_{\rho^{(0)}}(K)<\infty
\]
for every finite \(K\subset\Z^d\).  It is \emph{recurrent at a site \(x\)
from \(\rho^{(0)}\)} if
\[
  G_{\rho^{(0)}}(\{x\})=\infty.
\]
A nonzero projection \(p\in\algA\) is \emph{recurrent in the TOM sense} if
\[
  \sum_{n=0}^\infty\ip{u}{T^n(p)u}=\infty
  \qquad
  \text{for every }0\neq u\in\Ran(p).
\]
\end{definition}

\begin{remark}\label{rem:recurrence-distinction}
Recurrence at a site from a specified state is a state-dependent occupation
property.  Recurrence of a projection is an operator-theoretic property that
requires divergence on every vector in its range.  Neither notion should be
identified with irreducibility, primitivity, or a spectral gap of the internal
channel.
\end{remark}

\subsection{RMS ballistic speed}

Assume that the initial state has finite position second moment:
\begin{equation}\label{eq:finite-second-moment}
  \E_{\rho^{(0)}}|X_0|^2
  =\sum_{x\in\Z^d}|x|^2\Tr(\rho_x^{(0)})<\infty.
\end{equation}
Define
\begin{align*}
  \overline v_{\mathrm{bal}}(\rho^{(0)})
  &:=\limsup_{n\to\infty}\frac1n
      \bigl(\E_{\rho^{(0)}}|X_n|^2\bigr)^{1/2},\\
  \underline v_{\mathrm{bal}}(\rho^{(0)})
  &:=\liminf_{n\to\infty}\frac1n
      \bigl(\E_{\rho^{(0)}}|X_n|^2\bigr)^{1/2}.
\end{align*}
When these quantities are equal, their common value is denoted by
\(v_{\mathrm{bal}}(\rho^{(0)})\).

For \(N\geq1\), let
\[
  Q_N^2:=\sum_{|x|\leq N}|x|^2P_x\in\algA.
\]

\begin{proposition}\label{prop:elementary-speed}
For every initial state satisfying \eqref{eq:finite-second-moment}, the
following statements hold.

\begin{enumerate}[label=\textup{(\roman*)},leftmargin=2.4em]
\item For every \(n\geq0\),
\begin{equation}\label{eq:truncated-position}
  \E_{\rho^{(0)}}|X_n|^2
  =\lim_{N\to\infty}
    \Tr\bigl(\rho^{(0)}T^n(Q_N^2)\bigr),
\end{equation}
where the limit is monotone.

\item Whenever the first moment is finite,
\begin{equation}\label{eq:drift-speed-bound}
  \limsup_{n\to\infty}\frac{|\E X_n|}{n}
  \leq \overline v_{\mathrm{bal}}(\rho^{(0)}).
\end{equation}

\item If \(\overline v_{\mathrm{bal}}(\rho^{(0)})=0\), then for every
\(v>0\),
\begin{equation}\label{eq:linear-tail-vanishing}
  \PP_{\rho^{(0)}}(|X_n|\geq vn)\longrightarrow0.
\end{equation}

\item If \(\rho^{(0)}=a\rho_1^{(0)}+(1-a)\rho_2^{(0)}\), where
\(0\leq a\leq1\), then
\[
  \E_{\rho^{(0)}}|X_n|^2
  =a\E_{\rho_1^{(0)}}|X_n|^2
   +(1-a)\E_{\rho_2^{(0)}}|X_n|^2,
\]
and consequently
\begin{equation}\label{eq:mixture-speed-upper}
  \overline v_{\mathrm{bal}}(\rho^{(0)})^2
  \leq a\overline v_{\mathrm{bal}}(\rho_1^{(0)})^2
     +(1-a)\overline v_{\mathrm{bal}}(\rho_2^{(0)})^2.
\end{equation}
Here and below, a term with zero weight is omitted if the corresponding
quantity is infinite.
\end{enumerate}
\end{proposition}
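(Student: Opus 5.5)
Each of the four parts reduces to an elementary fact once the second moment is written as a monotone limit of TOM pairings. The real work is only in (i); the other parts are Jensen, Markov, and convexity arguments.

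For (i), I would start from the occupation duality \eqref{eq:occupation-duality} applied to single sites. Since \(Q_N^2=\sum_{|x|\le N}|x|^2P_x\) is a finite positive combination of the projections \(P_x\), linearity of \(T^n\) and of the trace pairing gives
\[
  \Tr\bigl(\rho^{(0)}T^n(Q_N^2)\bigr)=\sum_{|x|\le N}|x|^2\,\PP_{\rho^{(0)}}(X_n=x).
\]
The right-hand side is nondecreasing in \(N\). By monotone convergence for series with nonnegative terms it tends to \(\sum_x|x|^2\Tr(\rho^{(n)}_x)=\E_{\rho^{(0)}}|X_n|^2\), with the value \(+\infty\) allowed. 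Finiteness can be recorded separately when the increments are bounded, but the identity itself holds in \([0,\infty]\).

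The only subtle point is that \(\PP_{\rho^{(0)}}(X_n=x)=\Tr(\rho^{(0)}T^n(P_x))\) requires the duality \eqref{eq:duality} for the infinite-range normal map. I would take this as given from \eqref{eq:duality}. The reason it holds is that \(T\) is defined through monotone ultraweak limits on positive elements, and \(P_x\ge0\).

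For (ii), I would use Jensen, or Cauchy--Schwarz, componentwise: \(|\E X_n|\le \E|X_n|\le(\E|X_n|^2)^{1/2}\). Dividing by \(n\) and taking \(\limsup\) gives \eqref{eq:drift-speed-bound}.

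For (iii), Markov's inequality gives \(\PP(|X_n|\ge vn)\le \E|X_n|^2/(v^2n^2)\). This bound tends to \(0\) because \(\overline v_{\mathrm{bal}}=0\) means \(\E|X_n|^2/n^2\to0\).

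For (iv), linearity of \(M^n\) gives \(\rho^{(n)}_x=a\rho^{(n)}_{1,x}+(1-a)\rho^{(n)}_{2,x}\). Taking traces, multiplying by \(|x|^2\), and summing nonnegative terms gives the mixture identity in \([0,\infty]\). The convention of omitting zero-weight infinite terms handles \(a\in\{0,1\}\). Dividing by \(n^2\) and using \(\limsup(f+g)\le\limsup f+\limsup g\) for nonnegative sequences, together with \(\limsup (c f)=c\limsup f\), yields
\[
  \overline v^2\le a\,\overline v_1^{\,2}+(1-a)\,\overline v_2^{\,2}.
\]
Here I use that \(\limsup\) of the square equals the square of \(\limsup\) for nonnegative sequences, by continuity and monotonicity of \(t\mapsto t^2\).

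The main, if mild, obstacle is bookkeeping with possibly infinite moments. In each of (i)–(iv) I would keep all identities in \([0,\infty]\), rely only on nonnegativity so that no cancellation issues arise, and invoke \eqref{eq:finite-second-moment} only to make sure the quantities at \(n=0\) are finite.
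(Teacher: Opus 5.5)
Your proposal is correct and follows the paper's proof essentially step for step: duality with monotone convergence of the truncated sums for (i), Cauchy--Schwarz for (ii), Chebyshev for (iii), and for (iv) the fact that the second moment is affine in the initial state, combined with subadditivity of the $\limsup$. Your extra bookkeeping in $[0,\infty]$ (zero-weight infinite terms, and squaring a $\limsup$ of nonnegative quantities) only makes explicit what the paper leaves implicit.
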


\begin{proof}
The block decomposition of \(\rho^{(n)}\) gives
\[
  \Tr(\rho^{(n)}Q_N^2)
  =\sum_{|x|\leq N}|x|^2\Tr(\rho_x^{(n)}).
\]
This increases to \(\E|X_n|^2\), and duality gives
\eqref{eq:truncated-position}.  The bound \eqref{eq:drift-speed-bound} is the
Cauchy--Schwarz inequality
\[
  |\E X_n|\leq \E|X_n|\leq(\E|X_n|^2)^{1/2}.
\]
Equation \eqref{eq:linear-tail-vanishing} follows from Chebyshev's inequality.
Finally, linearity of \(M^n\) makes the second moment affine in the initial
state.  Taking a limsup of the resulting convex combination proves
\eqref{eq:mixture-speed-upper}.
\end{proof}

We say that the OQRW has \textit{range at most \(R\)} if
\begin{equation}\label{eq:finite-range-general}
  B_j^i=0\qquad\text{whenever }|i-j|>R.
\end{equation}
The associated quantum trajectory then has increments of length at most
\(R\), and hence
\begin{equation}\label{eq:pathwise-range-bound}
   |X_n-X_0|\leq Rn
\end{equation}
for every trajectory, including a spatially distributed initial state.
Equivalently, the same bound follows directly by iterating the support relation in \eqref{eq:general-oqrw}.

\begin{proposition}
\label{prop:speed-from-lln}
Assume the walk has finite range and \(\E|X_0|^2<\infty\).  If
\[
  \frac{X_n}{n}\longrightarrow m\in\R^d
\]
in probability, then the RMS ballistic speed exists and
\begin{equation}\label{eq:speed-equals-drift}
  v_{\mathrm{bal}}(\rho^{(0)})=|m|.
\end{equation}
\end{proposition}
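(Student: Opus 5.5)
The plan is to show that \(Y_n:=|X_n|^2/n^2\) converges to \(|m|^2\) in probability and is uniformly integrable. Then \(\E|X_n|^2/n^2\to|m|^2\), and taking square roots gives \eqref{eq:speed-equals-drift}, so that \(\overline v_{\mathrm{bal}}=\underline v_{\mathrm{bal}}=|m|\). Convergence in probability is immediate: \(X_n/n\to m\) in probability and \(y\mapsto|y|^2\) is continuous, so the continuous mapping theorem gives \(|X_n/n|^2\to|m|^2\) in probability.

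For uniform integrability, use the pathwise bound \eqref{eq:pathwise-range-bound}. It gives \(|X_n|\le |X_0|+Rn\) on every trajectory, hence for \(n\ge1\)
\[
  \frac{|X_n|^2}{n^2}\le \frac{2|X_0|^2}{n^2}+2R^2\le 2|X_0|^2+2R^2 .
\]
The right-hand side is a single random variable, independent of \(n\) and integrable because \(\E|X_0|^2<\infty\). So the family \(\{Y_n\}\) is dominated.

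One must justify placing all \(X_n\) on a common probability space. The quantum trajectory construction described before Definition~\ref{def:spatial-recurrence} does this: it is a Markov chain whose position marginals agree with the measured law. The convergence hypothesis concerns only the marginal laws, but since the trajectory realizes those marginals, the statement transfers.

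With domination in hand, apply the Vitali convergence theorem, or equivalently dominated convergence along subsequences that converge almost surely. This yields \(\E Y_n\to|m|^2\).

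An alternative route avoids a common space. Bound \(\E Y_n\,\mathbf 1\{|X_0|>M\}\le \E(2|X_0|^2+2R^2)\mathbf 1\{|X_0|>M\}\), which is small uniformly in \(n\). On \(\{|X_0|\le M\}\), \(Y_n\) is bounded by \(2M^2+2R^2\) and converges in probability, so bounded convergence applies.

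The main (mild) obstacle is exactly this transfer of the pathwise range bound to a joint statement about \((X_0,X_n)\). The trajectory representation resolves it. If one prefers to stay purely at the level of marginals, the bound also follows from the support relation in \eqref{eq:general-oqrw}: a component starting at \(j\) is supported in the ball of radius \(Rn\) around \(j\). Decomposing \(\rho^{(0)}\) by initial site and using linearity, Proposition~\ref{prop:elementary-speed}(iv), then gives the same truncation estimate.
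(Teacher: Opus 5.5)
Your proposal is correct and is essentially the paper's argument: the pathwise range bound gives \(|X_n/n|^2\le 2|X_0|^2+2R^2\) for \(n\geq1\), hence uniform integrability, and together with convergence in probability this yields \(\E|X_n/n|^2\to|m|^2\) and so \(v_{\mathrm{bal}}(\rho^{(0)})=|m|\). Your extra discussion of a common probability space is harmless but not needed: the paper simply works on the quantum-trajectory space, and convergence in probability to a constant depends only on the marginal laws of \(X_n\).
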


\begin{proof}
By \eqref{eq:pathwise-range-bound}, for \(n\geq1\),
\[
  \left|\frac{X_n}{n}\right|^2
  \leq \frac{2|X_0|^2}{n^2}+2R^2
  \leq 2|X_0|^2+2R^2.
\]
Thus \(\{|X_n/n|^2:n\geq1\}\) is uniformly integrable.  Convergence in
probability of \(X_n/n\) to \(m\) therefore implies
\[
  \E\left|\frac{X_n}{n}\right|^2\longrightarrow|m|^2.
\]
By taking square roots we arrive at \eqref{eq:speed-equals-drift}.
\end{proof}

\begin{remark}\label{rem:diffusive-not-ballistic}
The quantity
\[
  \frac1n\E|X_n|^2
\]
is a diffusive second-moment scale, not a ballistic speed.  A centered CLT
together with uniform integrability of \(\{|X_n|^2/n:n\geq1\}\)
implies convergence of this quantity to the trace of the limiting covariance;
it does not imply a positive ballistic speed.  Weak convergence in a CLT
alone does not ensure this second-moment convergence.  Under the hypotheses of Proposition~\ref{prop:speed-from-lln} with
nonzero drift, \(\E|X_n|^2\) has order \(n^2\), so the same quotient diverges linearly.
\end{remark}

\begin{remark}\label{rem:ballistic-velocity-laws}
The distinction between RMS transport and centered ballistic spreading can
be expressed directly in terms of a limiting velocity law.  Suppose that
the position laws satisfy \(X_n/n\Rightarrow\nu\), where \(\nu\) is a
probability measure on \(\R^d\), and that
\(\{|X_n/n|^2:n\geq1\}\) is uniformly integrable.  Write
\(\bar v=\int v\,\nu(dv)\).  Then
\begin{align}
  v_{\mathrm{bal}}^2
    &=\int |v|^2\,\nu(dv),
    \label{eq:rms-velocity-law}\\
  \lim_{n\to\infty}\frac{1}{n^2}
    \E|X_n-\E X_n|^2
    &=\int |v-\bar v|^2\,\nu(dv)
      =\int |v|^2\,\nu(dv)-|\bar v|^2.
    \label{eq:centered-ballistic-coefficient}
\end{align}
Indeed, weak convergence and uniform integrability give convergence of the
first and second moments, and the second identity follows from the variance
decomposition.  For finite-range OQRWs with \(\E|X_0|^2<\infty\), the
required uniform integrability follows from
\eqref{eq:pathwise-range-bound}.  The same argument applies to the
single-time position laws of a finite-range unitary walk started in a
finitely supported state: finite propagation bounds \(|X_n|/n\) uniformly
for \(n\geq1\), without requiring a trajectory with intermediate
measurements.

In the one-component OQRW regime, \(\nu=\delta_m\), so the centered
coefficient in \eqref{eq:centered-ballistic-coefficient} vanishes even when
\(v_{\mathrm{bal}}=|m|>0\).  By contrast, a nondegenerate unitary
velocity law, as in \cite{Konno05,GrimmettJansonScudo04}, has a strictly
positive centered coefficient, including when \(\bar v=0\).
For the finite absorption decomposition of Section~\ref{sec:components},
\(\nu=\sum_\alpha a_\alpha\delta_{m_\alpha}\), and the centered
coefficient equals
\[
  \sum_\alpha a_\alpha|m_\alpha-\bar m|^2,
  \qquad \bar m=\sum_\alpha a_\alpha m_\alpha.
\]
Here a positive coefficient records the mixture of distinct component
drifts.  It does not require coherent interference between dispersion
bands.  None of these moment identities alone determines the sum of local
return probabilities.
\end{remark}

\section{A periodic Fourier local limit theorem and quantitative potentials}
\label{sec:llt}

We now specialize to a homogeneous finite-range OQRW with a
finite-dimensional internal space.  Let \(S\subset\Z^r\) be finite and let
\((L_s)_{s\in S}\subset B(\cH)\), where \(\dim\cH<\infty\), satisfy
\begin{equation}\label{eq:homogeneous-normalization}
  \sum_{s\in S}L_s^*L_s=I_{\cH}.
\end{equation}
The block evolution is
\begin{equation}\label{eq:homogeneous-evolution}
  \rho_x^{(n+1)}
  =\sum_{s\in S}L_s\rho_{x-s}^{(n)}L_s^*,
  \qquad x\in\Z^r.
\end{equation}
Its local channel and Fourier deformations are
\begin{align}
  \cL(\sigma)&:=\sum_{s\in S}L_s\sigma L_s^*,
  \label{eq:local-channel}\\
  \Phi_\theta(\sigma)&:=\sum_{s\in S}e^{i\theta\cdot s}
      L_s\sigma L_s^*,
  \qquad \theta\in\mathbb T^r:=[-\pi,\pi]^r.
  \label{eq:fourier-channel}
\end{align}
We identify opposite faces of \(\mathbb T^r\), so that \(0\) is its distinguished
point.

\begin{lemma}
\label{lem:fourier-contraction}
For every \(\theta\in\mathbb T^r\), the map \(\Phi_\theta\) is a contraction
on the trace class:
\begin{equation}\label{eq:fourier-trace-contraction}
  \norm{\Phi_\theta}_{1\to1}\leq1.
\end{equation}
Moreover,
\begin{equation}\label{eq:fourier-spectral-radius-bound}
  r(\Phi_\theta)\leq1.
\end{equation}
\end{lemma}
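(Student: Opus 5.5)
The plan is to bound the trace norm of \(\Phi_\theta(\sigma)\) term by term, using the triangle inequality together with the fact that each summand \(L_s\sigma L_s^*\) is controlled by a positive map. Since \(\dim\cH<\infty\), every operator is trace class and the trace norm satisfies \(\norm{X}_1=\sup_{\norm{Y}\leq1}\abs{\Tr(XY)}\).

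The key steps, in order, are as follows.

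\begin{enumerate}[label=\textup{(\arabic*)},leftmargin=2.4em]
\item First treat a positive \(\sigma\geq0\). Each summand \(L_s\sigma L_s^*\) is then positive. Since \(\abs{e^{i\theta\cdot s}}=1\), the triangle inequality and \eqref{eq:homogeneous-normalization} give
\[
  \norm{\Phi_\theta(\sigma)}_1
  \leq\sum_{s\in S}\Tr(L_s\sigma L_s^*)
  =\Tr\Bigl(\sigma\sum_{s\in S}L_s^*L_s\Bigr)
  =\Tr(\sigma)=\norm{\sigma}_1 .
\]

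\item For a general \(\sigma\), decompose it as \(\sigma=\sigma_1-\sigma_2+i(\sigma_3-\sigma_4)\) with \(\sigma_k\geq0\). This only yields a constant up to \(2\), so it is not sharp. I would instead use the polar/singular value decomposition \(\sigma=\sum_k \lambda_k\,|u_k\rangle\langle w_k|\), where \(\lambda_k\geq0\), \(\sum_k\lambda_k=\norm{\sigma}_1\), and the \(u_k,w_k\) are unit vectors. The problem then reduces, by the triangle inequality, to rank-one operators \(|u\rangle\langle w|\).

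\item For a rank-one \(\sigma=|u\rangle\langle w|\), write
\[
  \Phi_\theta(|u\rangle\langle w|)=\sum_{s\in S}|a_s\rangle\langle b_s|,
  \qquad a_s=e^{i\theta\cdot s}L_su,\quad b_s=L_sw .
\]
By the triangle inequality and Cauchy--Schwarz over \(s\),
\[
  \norm{\Phi_\theta(|u\rangle\langle w|)}_1
  \leq\sum_{s\in S}\norm{L_su}\,\norm{L_sw}
  \leq\Bigl(\sum_{s}\norm{L_su}^2\Bigr)^{1/2}\Bigl(\sum_{s}\norm{L_sw}^2\Bigr)^{1/2}
  =\norm{u}\,\norm{w}=1,
\]
where the last equality again uses \eqref{eq:homogeneous-normalization}. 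Summing over \(k\) then gives \eqref{eq:fourier-trace-contraction}.
\end{enumerate}

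This rank-one Cauchy--Schwarz step is the only point requiring care. It is where the general (non-Hermitian) case is handled without losing a constant, so it is the step I expect to be the main obstacle. An alternative is to use the dual map \(\Phi_\theta^*(Y)=\sum_s e^{-i\theta\cdot s}L_s^*YL_s\) and show \(\norm{\Phi_\theta^*(Y)}\leq\norm{Y}\) by writing it as \(V^*(Y\otimes D_\theta)V\), where \(V=\sum_s L_s\otimes|s\rangle\) is an isometry and \(D_\theta\) is a diagonal unitary. Either route works.

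Finally, for \eqref{eq:fourier-spectral-radius-bound}, apply Gelfand's formula in the finite-dimensional space \(B(\cH)\) equipped with the trace norm:
\[
  r(\Phi_\theta)=\lim_{n\to\infty}\norm{\Phi_\theta^n}_{1\to1}^{1/n}\leq\norm{\Phi_\theta}_{1\to1}\leq1,
\]
since the spectral radius does not depend on the choice of norm.
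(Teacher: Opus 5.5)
Your proof is correct, but its main line differs from the paper's. The paper works only on the Heisenberg side: it writes the trace-dual map as \(\Phi_\theta^*(A)=V^*(A\otimes D_\theta)V\) with the isometry \(Vv=\sum_s L_sv\otimes e_s\), reads off \(\norm{\Phi_\theta^*}_{\infty\to\infty}\leq1\), and transfers this to \(\norm{\Phi_\theta}_{1\to1}\leq1\) by trace/operator-norm duality. That is the alternative you sketch at the end. You instead stay in the Schr\"odinger picture, reduce by the singular value decomposition to rank-one inputs, and use \(\norm{|a\rangle\langle b|}_1=\norm{a}\norm{b}\) together with Cauchy--Schwarz over \(s\).

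Both routes rest on the same fact, \(\norm{Vu}^2=\sum_s\norm{L_su}^2=\norm{u}^2\). You use it through Cauchy--Schwarz on \(\sum_s\norm{L_su}\norm{L_sw}\), and the paper uses it through \(\norm{V}=1\). Your route is fully elementary and needs no duality; once step (3) is in place, the positive case in step (1) is superfluous. The paper's route is a one-line structural statement, a compression of a contraction by an isometry, and it directly gives the contraction property of the dual map, which is the natural object in the paper's TOM/Heisenberg framework.

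For the spectral radius, both arguments amount to \(r(\Phi_\theta)\leq\norm{\Phi_\theta}_{1\to1}\). Invoking Gelfand's formula is correct but more than needed.
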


\begin{proof}
Let \((e_s)_{s\in S}\) be the canonical basis of \(\ell^2(S)\), and define
\[
  Vv:=\sum_{s\in S}L_sv\otimes e_s,
  \qquad v\in\cH.
\]
The normalization \eqref{eq:homogeneous-normalization} gives
\(V^*V=I_{\cH}\), so \(V\) is an isometry.  Let \(D_\theta\) be the
diagonal unitary on \(\ell^2(S)\) defined by
\[
  D_\theta e_s=e^{i\theta\cdot s}e_s.
\]
With respect to the bilinear trace pairing \(\Tr(\sigma A)\) between the
trace class and \(B(\cH)\), the trace-dual map of \(\Phi_\theta\) is
\[
  \Phi_\theta^*(A)
  =\sum_{s\in S}e^{i\theta\cdot s}L_s^*AL_s
  =V^*(A\otimes D_\theta)V.
\]
Hence
\[
  \norm{\Phi_\theta^*(A)}
  \leq\norm{V}^2\norm{A\otimes D_\theta}
  =\norm{A},
\]
and therefore \(\norm{\Phi_\theta^*}_{\infty\to\infty}\leq1\).
Trace/operator-norm duality now yields
\(\norm{\Phi_\theta}_{1\to1}\leq1\).  The spectral-radius estimate follows
from \(r(\Phi_\theta)\leq\norm{\Phi_\theta}_{1\to1}\).
\end{proof}

\begin{assumption}
\label{ass:fourier}
The following conditions hold.
\begin{enumerate}[label=\textup{(A\arabic*)},leftmargin=2.6em]
\item The local channel \(\cL\) is primitive.  Thus it has a unique faithful
invariant state \(\rho_\infty\), the eigenvalue \(1\) is simple, and no other
eigenvalue lies on the unit circle.

\item The Fourier peripheral set
\begin{equation}\label{eq:peripheral-set}
  \Theta:=\{\xi\in\mathbb T^r:r(\Phi_\xi)=1\}
\end{equation}
is finite.  For every \(\xi\in\Theta\), the map \(\Phi_\xi\) has a unique
eigenvalue \(\zeta_\xi\) on the unit circle, and this eigenvalue is
algebraically simple.  Moreover,
\[
  \zeta_0=1,\qquad
  \zeta_\xi\neq1\quad(\xi\neq0),
\]
and every \(\zeta_\xi\) is a root of unity.

\item Let \(\lambda_\xi(\xi+\eta)\) be the analytic eigenvalue branch issuing
from \(\zeta_\xi\).  For real \(\eta\) near \(0\), these branches have the
common expansion
\begin{equation}\label{eq:lambda-expansion}
  \log\!\left(\zeta_\xi^{-1}\lambda_\xi(\xi+\eta)\right)
  =i\,m\cdot\eta-\frac12\eta^T\Sigma\eta+O(|\eta|^3),
  \qquad \xi\in\Theta,
\end{equation}
where \(m\in\R^r\) and \(\Sigma\) is a symmetric positive-definite real
\(r\times r\) matrix.
\end{enumerate}
\end{assumption}

Condition~\textup{(A1)} permits analytic perturbation of the principal
eigenvalue \cite{Kato95}; the required spectral facts for positive maps are
standard \cite{EvansHoeghKrohn78}.  Conditions~\textup{(A2)}--\textup{(A3)}
retain finitely many periodic Fourier modes.  Strong aperiodicity is the
special case \(\Theta=\{0\}\).  For scalar walks whose active jumps generate the lattice,
\(\Phi_\xi=1\) forces \(\xi=0\).  For an OQRW, however, the absence of an
eigenvalue \(1\) at a nonzero Fourier point is an additional assumption;
passing to the lattice generated by the active jumps does not ensure it.
Example~\ref{ex:primitive-zero-speed-transient} exhibits this distinction.
This part of \textup{(A2)} is used both for the mean-one return mask and for
the absence of further resolvent singularities.  The positive-definite quadratic term in
\textup{(A3)} is the standard nondegeneracy condition for a
Markov-additive local limit theorem.

Figure~\ref{fig:oqrw-proof-architecture} summarizes the two analytic routes
from the peripheral Fourier expansion to the recurrence and potential
results proved below.

\FloatBarrier
\begin{figure}[t]
\centering
\resizebox{\textwidth}{!}{%
\begin{tikzpicture}
  \node[oqrw blue,text width=7.3cm] (fourier) at (0,0) {
    \textbf{Fourier deformations}\\[-1pt]
    $\displaystyle \Phi_\theta(\sigma)
       =\sum_{s\in S}e^{i\theta\cdot s}L_s\sigma L_s^*$,
    \qquad $r(\Phi_\theta)\leq1$
  };

  \node[oqrw gray,text width=8.8cm] (spectral) at (0,-1.9) {
    \textbf{Finite peripheral spectrum and quadratic expansion}\\[-1pt]
    $\displaystyle
      \log\!\bigl(\zeta_\xi^{-1}\lambda_\xi(\xi+\eta)\bigr)
      =i\,m\cdot\eta-\tfrac12\eta^T\Sigma\eta+O(|\eta|^3)$,
    \quad $\xi\in\Theta$
  };

  \node[oqrw blue,text width=9.4cm] (llt) at (0,-4.05) {
    \textbf{Periodic uniform local limit theorem}\\[-1pt]
    $\displaystyle n^{r/2}p_n(x)
      =D_{n,x}(\rho_0)
       g_\Sigma\!\left(\frac{x-nm}{\sqrt n}\right)+o(1)$,
    uniformly in $x$ and $\rho_0$
  };

  \node[oqrw green,text width=6.4cm] (returns) at (-4.0,-6.75) {
    \textbf{Sum the local probabilities}\\[1pt]
    $m=0$: polynomial return sums\\
    $m\neq0$: Laplace deformation gives\\
    $\mathbb P(X_n\in K)\leq C_Ke^{-c_Kn}$
  };

  \node[oqrw gold,text width=6.4cm] (resolvent) at (4.0,-6.75) {
    \textbf{Centered Fourier resolvent ($m=0$)}\\[1pt]
    $\displaystyle F_{1,\rho_0}(\theta)
      =\frac{2}{\theta^T\Sigma\theta}
       +O(|\theta|^{-1})$
  };

  \node[oqrw green,text width=6.4cm] (recurrence) at (-4.0,-9.25) {
    \textbf{Recurrence classification}\\[-1pt]
    TOM recurrence of $P_0$ for $m=0$, $r\leq2$;\\
    transience for $m\neq0$ or $m=0$, $r\geq3$
  };

  \node[oqrw gold,text width=6.4cm] (potentials) at (4.0,-9.25) {
    \textbf{Quantitative spatial potentials}\\[-1pt]
    Newtonian Green asymptotics $(r\geq3)$;\\
    linear/logarithmic\\potential kernels $(r=1,2)$
  };

  \draw[oqrw arrow] (fourier) -- node[right,font=\small] {analytic perturbation} (spectral);
  \draw[oqrw arrow] (spectral) -- node[right,font=\small] {Fourier inversion} (llt);
  \draw[oqrw arrow] (llt) -- node[above left,font=\small] {time summation} (returns);
  \draw[oqrw arrow] (llt) -- node[above right,font=\small] {Neumann series} (resolvent);
  \draw[oqrw arrow] (returns) -- (recurrence);
  \draw[oqrw arrow] (resolvent) -- node[right,font=\small] {Fourier singularity} (potentials);
\end{tikzpicture}%
}
\caption{Local-to-global architecture of the main argument.  The periodic
local theorem controls time returns, while the centered resolvent singularity
controls the spatial profile of Green functions and potential kernels.}
\label{fig:oqrw-proof-architecture}
\end{figure}
\FloatBarrier

For a localized initial state
\begin{equation}\label{eq:localized-initial-state}
  \rho^{(0)}=\rho_0\otimes|0\rangle\langle0|,
  \qquad \rho_0\geq0,
  \qquad \Tr(\rho_0)=1,
\end{equation}
write
\[
  p_n(x):=\PP_{\rho^{(0)}}(X_n=x).
\]
Let
\begin{equation}\label{eq:gaussian-density}
  g_\Sigma(y)
  :=\frac{1}{(2\pi)^{r/2}\sqrt{\det\Sigma}}
    \exp\!\left(-\frac12y^T\Sigma^{-1}y\right).
\end{equation}

Let \(q\) be a common order of the roots
\((\zeta_\xi)_{\xi\in\Theta}\), and let \(\Pi_\xi\) be the spectral
projection of \(\Phi_\xi\) at \(\zeta_\xi\).  For a density matrix \(\rho_0\)
define
\begin{align}
  a_\xi(\rho_0)&:=\Tr\bigl(\Pi_\xi(\rho_0)\bigr),
  \label{eq:periodic-amplitude}\\
  D_{n,x}(\rho_0)
  &:=\sum_{\xi\in\Theta}
     \zeta_\xi^n e^{-i\xi\cdot x}a_\xi(\rho_0).
  \label{eq:periodic-mask}
\end{align}
Because \(\Pi_0(\sigma)=\Tr(\sigma)\rho_\infty\), one has
\begin{equation}\label{eq:zero-amplitude}
  a_0(\rho_0)=1.
\end{equation}

\begin{theorem}
\label{thm:uniform-llt}
Under Assumption~\ref{ass:fourier}, the following statements hold:
\begin{enumerate}[label=\textup{(\roman*)},leftmargin=2.4em]
\item The position law satisfies the periodic uniform local limit theorem
\begin{equation}\label{eq:uniform-llt}
  \sup_{x\in\Z^r}
  \left|
    n^{r/2}p_n(x)
    -D_{n,x}(\rho_0)
     g_\Sigma\!\left(\frac{x-nm}{\sqrt n}\right)
  \right|
  \longrightarrow0.
\end{equation}

\item If \(m=0\), then for every finite \(K\subset\Z^r\) there is
\(C_K<\infty\) such that
\begin{equation}\label{eq:centered-upper-return}
  \PP_{\rho^{(0)}}(X_n\in K)\leq C_Kn^{-r/2}
  \qquad(n\geq1).
\end{equation}
For \(0\leq\ell<q\), put
\begin{equation}\label{eq:residue-mask}
  D_\ell(\rho_0)
  :=\sum_{\xi\in\Theta}\zeta_\xi^\ell a_\xi(\rho_0).
\end{equation}
Then
\begin{equation}\label{eq:residue-return-limit}
  n^{r/2}p_n(0)\longrightarrow
  g_\Sigma(0)D_\ell(\rho_0)
  \quad\text{as }n\to\infty,\quad n\equiv\ell\pmod q,
\end{equation}
where
\begin{equation}\label{eq:mask-mean}
  D_\ell(\rho_0)\geq0,
  \qquad
  \frac1q\sum_{\ell=0}^{q-1}D_\ell(\rho_0)=1.
\end{equation}
In particular,
\begin{align}
  \sum_{n=1}^N p_n(0)
  &=2g_\Sigma(0)N^{1/2}+o(N^{1/2}),
  &&r=1,
  \label{eq:cumulative-return-r1}\\
  \sum_{n=1}^N p_n(0)
  &=g_\Sigma(0)\log N+o(\log N),
  &&r=2.
  \label{eq:cumulative-return-r2}
\end{align}

\item If \(m\neq0\), then for every finite \(K\subset\Z^r\) there are
\(C_K,c_K>0\) such that
\begin{equation}\label{eq:drift-exponential-return}
  \PP_{\rho^{(0)}}(X_n\in K)\leq C_Ke^{-c_Kn}
  \qquad(n\geq0).
\end{equation}
\end{enumerate}
The convergence and the estimates may be chosen uniformly over all density
matrices \(\rho_0\).
\end{theorem}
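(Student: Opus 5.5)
The plan is to start from the Fourier representation of the local probabilities. Iterating \eqref{eq:homogeneous-evolution} and taking Fourier transforms in \(x\) gives
\[
  p_n(x)=\frac{1}{(2\pi)^r}\int_{\mathbb T^r}e^{-i\theta\cdot x}\Tr\bigl(\Phi_\theta^n(\rho_0)\bigr)\,d\theta .
\]
This holds because \(\hat\rho^{(n)}(\theta)=\sum_x e^{i\theta\cdot x}\rho^{(n)}_x=\Phi_\theta^n(\rho_0)\).

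Choose disjoint small balls \(B_\xi=\{\xi+\eta:|\eta|<\delta\}\) around the finitely many points \(\xi\in\Theta\). On each ball, analytic perturbation (Kato) of the algebraically simple peripheral eigenvalue gives a decomposition
\[
  \Phi_{\xi+\eta}^n=\lambda_\xi(\xi+\eta)^n\Pi_\xi(\eta)+R_\xi(\eta)^n,
  \qquad \|R_\xi(\eta)^n\|\leq C\kappa^n,\quad \kappa<1 .
\]
On the compact complement \(\mathbb T^r\setminus\bigcup B_\xi\) we have \(r(\Phi_\theta)<1\). By upper semicontinuity of the spectral radius, compactness, and the uniform bound of Lemma~\ref{lem:fourier-contraction}, this yields \(\sup_\theta\|\Phi_\theta^n\|\leq C\kappa^n\) there. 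The complement therefore contributes \(O(\kappa^n)\) to \(n^{r/2}p_n(x)\), uniformly in \(x\) and in \(\rho_0\), since \(\|\rho_0\|_1=1\).

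On \(B_\xi\), write \(e^{-i(\xi+\eta)\cdot x}=e^{-i\xi\cdot x}e^{-i\eta\cdot x}\) and factor out \(\zeta_\xi^n\). Substitute \(\eta=u/\sqrt n\) and use \eqref{eq:lambda-expansion} together with \(\Tr\Pi_\xi(\eta)\rho_0=a_\xi(\rho_0)+O(|\eta|)\). The standard local limit argument then applies: split into \(|u|\le A\), where the Taylor expansion gives convergence to the Gaussian integral, and \(A\le|u|\le\delta\sqrt n\), where \(|\lambda_\xi|\le e^{-c|\eta|^2}\) dominates. This gives
\[
  \frac{n^{r/2}}{(2\pi)^r}\int_{B_\xi}(\cdots)=\zeta_\xi^ne^{-i\xi\cdot x}a_\xi(\rho_0)\,g_\Sigma\!\left(\frac{x-nm}{\sqrt n}\right)+o(1)
\]
uniformly in \(x\) and \(\rho_0\). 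Here the phase \(e^{-iu\cdot(x-nm)/\sqrt n}\) is bounded by \(1\), and the Gaussian Fourier inversion is exact. Summing over \(\xi\) proves (i).

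For (ii), with \(m=0\) the bound \eqref{eq:centered-upper-return} follows from (i), since \(|D_{n,x}|\le\sum_\xi\|\Pi_\xi\|\) and \(g_\Sigma\) is bounded. The limit \eqref{eq:residue-return-limit} is (i) at \(x=0\), using \(\zeta_\xi^n=\zeta_\xi^\ell\) for \(n\equiv\ell\pmod q\).

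The mask properties \eqref{eq:mask-mean} are where (A2) is used:
\begin{itemize}
  \item Positivity: \(D_\ell\ge0\) because it is the limit of the nonnegative quantities \(n^{r/2}p_n(0)/g_\Sigma(0)\).
  \item Mean one: averaging over \(\ell\), the sum \(\frac1q\sum_\ell\zeta_\xi^\ell\) vanishes for \(\zeta_\xi\ne1\), which holds for all \(\xi\ne0\), so the average equals \(a_0=1\) by \eqref{eq:zero-amplitude}.
\end{itemize}
The cumulative asymptotics follow by grouping \(n\) into residue classes mod \(q\). For \(r=1\), \(\sum_{n\le N}n^{-1/2}\sim2N^{1/2}\); for \(r=2\), \(\sum_{n\le N} n^{-1}\sim\log N\). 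In each case the class averages produce the factor \(1\), and the \(o(1)\) errors sum to \(o(N^{1/2})\) and \(o(\log N)\) respectively.

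For (iii), use the real (Laplace) deformation
\[
  \Psi_t(\sigma)=\sum_s e^{t\cdot s}L_s\sigma L_s^*,
  \qquad \PP(X_n\in K)\le e^{\max_{x\in K}|t\cdot x|}\,\Tr\Psi_t^n(\rho_0),
\]
which is Chebyshev's inequality with weight \(e^{t\cdot X_n}\). By (A1), \(r(\Psi_t)=\lambda(t)\) is an analytic simple Perron eigenvalue near \(t=0\), with \(\lambda(0)=1\). Its gradient is \(\nabla\lambda(0)=m\); this follows by identifying \(\lambda(t)\) with the analytic continuation \(\lambda_0(-it)\) of the branch in (A3). Taking \(t=-\varepsilon m\) gives \(\lambda(t)=1-\varepsilon|m|^2+O(\varepsilon^2)<1\). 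Then \(\|\Psi_t^n\|\le Ce^{-cn}\), uniformly in \(\rho_0\), which proves \eqref{eq:drift-exponential-return}.

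I expect the main obstacle to be the uniform control in (i). One must bound \(\Phi_\theta^n\) away from \(\Theta\) and the remainder \(R_\xi^n\), while simultaneously making the Gaussian error \(o(1)\) uniformly in \(x\). The plan handles this by bounding all error terms in \(L^1(d\theta)\) independently of \(x\).
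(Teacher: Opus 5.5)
Your proposal is correct and follows essentially the same route as the paper: Fourier inversion, a Riesz-projection decomposition near each peripheral point with uniform exponential bounds on the remainder and on the complement, the scaling $u=\sqrt n\,\eta$ for (i), root-of-unity averaging for the mean-one mask, and for (iii) the real Laplace deformation $\Psi_t=\Phi_{-it}$ with $\nabla\log\mu(0)=m$ followed by a Chebyshev bound. The differences are only cosmetic: you group into residue classes where the paper uses Abel summation for the cumulative sums, and you truncate at $|u|\le A$ where the paper proves uniform $L^1$ convergence by dominated convergence.
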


\begin{proof}
We prove the assertions with constants uniform in the initial density
matrix.  Since \(B(\cH)\) is finite dimensional, all norms on it and on the
space of superoperators are equivalent.  We fix one such norm throughout.
For a density matrix \(\rho\), every linear functional of the form
\(\rho\mapsto\Tr(A\rho)\) is bounded uniformly in \(\rho\); this observation
will be used repeatedly.

\smallskip
\noindent\emph{Step 1: Fourier inversion.}
Introduce the matrix-valued Fourier transform
\[
  \widehat\rho_n(\theta)
  :=\sum_{x\in\Z^r}e^{i\theta\cdot x}\rho_x^{(n)}.
\]
The sum is absolutely convergent in trace norm because
\(\sum_x\Tr(\rho_x^{(n)})=1\).  Using
\eqref{eq:homogeneous-evolution}, the finiteness of \(S\), and the change of
variables \(y=x-s\), we obtain
\begin{align*}
  \widehat\rho_{n+1}(\theta)
  &=\sum_{x\in\Z^r}\sum_{s\in S}
    e^{i\theta\cdot x}L_s\rho_{x-s}^{(n)}L_s^*\\
  &=\sum_{s\in S}e^{i\theta\cdot s}
    L_s\widehat\rho_n(\theta)L_s^*
   =\Phi_\theta\bigl(\widehat\rho_n(\theta)\bigr).
\end{align*}
Since the walk starts at the origin,
\(\widehat\rho_0(\theta)=\rho_0\), and induction yields
\[
  \widehat\rho_n(\theta)=\Phi_\theta^n(\rho_0).
\]
Taking traces proves
\begin{equation}\label{eq:characteristic-fourier}
  \E_{\rho^{(0)}}e^{i\theta\cdot X_n}
  =\Tr\bigl(\Phi_\theta^n(\rho_0)\bigr).
\end{equation}
The function on the right is a trigonometric polynomial, and its Fourier
coefficient at \(x\) is \(p_n(x)\).  Fourier inversion on \(\mathbb T^r\)
therefore gives
\begin{equation}\label{eq:fourier-inversion}
  p_n(x)
  =\frac{1}{(2\pi)^r}\int_{\mathbb T^r}
    e^{-i\theta\cdot x}\Tr\bigl(\Phi_\theta^n(\rho_0)\bigr)\,d\theta.
\end{equation}

\smallskip
\noindent\emph{Step 2: spectral estimates.}
Because \(\Theta\) is finite, we may choose \(\delta>0\) so that the torus
neighborhoods
\[
  V_\xi:=\{\xi+\eta:|\eta|<\delta\},
  \qquad \xi\in\Theta,
\]
are pairwise disjoint.  Fix \(\xi\in\Theta\).  The eigenvalue
\(\zeta_\xi\) is algebraically simple and isolated from the remaining
spectrum of \(\Phi_\xi\).  Hence a positively oriented circle
\(\mathcal C_\xi\) can be chosen around \(\zeta_\xi\) containing no other
spectral point.  After decreasing \(\delta\), the Riesz projection
\[
  \Pi_{\xi,\eta}
  :=\frac{1}{2\pi i}\int_{\mathcal C_\xi}
    (zI-\Phi_{\xi+\eta})^{-1}\,dz
\]
is well defined and analytic for \(|\eta|<\delta\).  It has rank one and
commutes with \(\Phi_{\xi+\eta}\).  Define
\[
  N_{\xi,\eta}
  :=\Phi_{\xi+\eta}(I-\Pi_{\xi,\eta}).
\]
Then \(\Pi_{\xi,\eta}N_{\xi,\eta}
=N_{\xi,\eta}\Pi_{\xi,\eta}=0\), and, for every integer \(n\geq0\),
\begin{equation}\label{eq:spectral-decomposition}
  \Phi_{\xi+\eta}^n
  =\lambda_\xi(\xi+\eta)^n\Pi_{\xi,\eta}
   +N_{\xi,\eta}^n(I-\Pi_{\xi,\eta}).
\end{equation}
For \(n\geq1\), the last term equals \(N_{\xi,\eta}^n\).

At \(\eta=0\), the spectral radius of \(N_{\xi,0}\) is strictly smaller
than one.  Continuity of the spectrum, compactness of the closed
\(\eta\)-balls, and finiteness of \(\Theta\) give numbers
\(\kappa_0<\kappa<1\) such that
\[
  r(N_{\xi,\eta})\leq\kappa_0
  \qquad(\xi\in\Theta,\ |\eta|\leq\delta).
\]
The circle \(|z|=\kappa\) belongs to the common resolvent set.  Therefore
\[
  N_{\xi,\eta}^n
  =\frac{1}{2\pi i}\int_{|z|=\kappa}
    z^n(zI-N_{\xi,\eta})^{-1}\,dz.
\]
The resolvents are uniformly bounded on the compact parameter set, and hence
\begin{equation}\label{eq:local-remainder-bound}
  \sup_{\xi\in\Theta,\ |\eta|<\delta}
  \norm{N_{\xi,\eta}^n}\leq C\kappa^n
\end{equation}
with \(C\) independent of \(\xi,\eta,n\).

Put
\begin{equation}\label{eq:amplitude}
  a_\xi(\eta;\rho_0)
  :=\Tr\bigl(\Pi_{\xi,\eta}(\rho_0)\bigr).
\end{equation}
This is analytic in \(\eta\), linear in \(\rho_0\), and
\(a_\xi(0;\rho_0)=a_\xi(\rho_0)\).  Uniform bounds on the finitely many
analytic projections and their first derivatives imply
\begin{equation}\label{eq:amplitude-uniform-bounds}
  \sup_{\rho_0}\abs{a_\xi(\eta;\rho_0)}\leq C,
  \qquad
  \sup_{\rho_0}
  \abs{a_\xi(\eta;\rho_0)-a_\xi(\rho_0)}
  \leq C|\eta|,
\end{equation}
uniformly in \(\xi\in\Theta\).  The suprema here and below are taken over
all density matrices.

Let \(\sigma_*>0\) be the smallest eigenvalue of \(\Sigma\).  Since
\(\Theta\) is finite, the remainder in \eqref{eq:lambda-expansion} is
bounded by \(M|\eta|^3\), with \(M\) independent of \(\xi\).  Consequently,
after reducing \(\delta\),
\[
  \log\abs{\lambda_\xi(\xi+\eta)}
  =-\frac12\eta^T\Sigma\eta+O(|\eta|^3)
  \leq-\frac{\sigma_*}{4}|\eta|^2.
\]
Thus, for \(c=\sigma_*/4\),
\begin{equation}\label{eq:lambda-gaussian-majorant}
  |\lambda_\xi(\xi+\eta)|\leq e^{-c|\eta|^2}
  \qquad(\xi\in\Theta,\ |\eta|<\delta)
\end{equation}
for all the peripheral neighborhoods.

Finally, set
\[
  W:=\mathbb T^r\setminus\bigcup_{\xi\in\Theta}V_\xi.
\]
By \eqref{eq:peripheral-set}, \(r(\Phi_\theta)<1\) on \(W\).  Continuity of
the spectral radius and compactness give
\(\sup_{\theta\in W}r(\Phi_\theta)<1\).  Enlarge the previously chosen
\(\kappa<1\), if necessary, so that it also exceeds this supremum; the
bound \eqref{eq:local-remainder-bound} remains valid.  Using the resolvent
circle \(|z|=\kappa\) then yields
\begin{equation}\label{eq:away-origin-bound}
  \sup_{\theta\in W}
  \norm{\Phi_\theta^n}\leq C\kappa^n.
\end{equation}

\smallskip
\noindent\emph{Step 3: the periodic uniform local limit.}
Insert \eqref{eq:spectral-decomposition} into
\eqref{eq:fourier-inversion} and split the torus into \(W\) and the
\(V_\xi\).  By \eqref{eq:local-remainder-bound} and
\eqref{eq:away-origin-bound}, the sum of all complementary contributions
has modulus at most \(C\kappa^n\), uniformly in \(x\) and \(\rho_0\).
Indeed, the factors \(e^{-i\theta\cdot x}\) have modulus one and the trace
functional is bounded.  After multiplication by \(n^{r/2}\), this error is
\[
  O(n^{r/2}\kappa^n)=o(1).
\]

Consider the dominant contribution from \(V_\xi\).  Writing
\(\theta=\xi+\eta\), multiplying by \(n^{r/2}\), and setting
\(u=\sqrt n\,\eta\), it becomes
\begin{align}
 I_{n,\xi}(x;\rho_0)
 &:=
 \frac{\zeta_\xi^ne^{-i\xi\cdot x}}{(2\pi)^r}
 \int_{|u|<\delta\sqrt n}
 e^{-iu\cdot x/\sqrt n}
 \left(\zeta_\xi^{-1}
   \lambda_\xi(\xi+u/\sqrt n)\right)^n
 a_\xi(u/\sqrt n;\rho_0)\,du.
 \label{eq:scaled-peripheral-contribution}
\end{align}
Set
\[
  y_{n,x}:=\frac{x-nm}{\sqrt n}
\]
and define a function on \(\R^r\) by
\begin{align*}
  H_{n,\xi}(u;\rho_0)
  &:=
  \mathbf1_{\{|u|<\delta\sqrt n\}}
  e^{-i\sqrt n\,m\cdot u}
  \left(\zeta_\xi^{-1}
    \lambda_\xi(\xi+u/\sqrt n)\right)^n
  a_\xi(u/\sqrt n;\rho_0).
\end{align*}
The identity
\[
  e^{-iu\cdot x/\sqrt n}
  =e^{-iu\cdot y_{n,x}}e^{-i\sqrt n\,m\cdot u}
\]
shows that \eqref{eq:scaled-peripheral-contribution} equals
\begin{equation}\label{eq:scaled-contribution-H}
  \zeta_\xi^ne^{-i\xi\cdot x}
  \frac{1}{(2\pi)^r}\int_{\R^r}
    e^{-iu\cdot y_{n,x}}H_{n,\xi}(u;\rho_0)\,du.
\end{equation}

For fixed \(u\), \eqref{eq:lambda-expansion} gives
\begin{align*}
 &n\log\!\left(
   \zeta_\xi^{-1}\lambda_\xi(\xi+u/\sqrt n)\right)
   -i\sqrt n\,m\cdot u\\
 &\qquad
 =-\frac12u^T\Sigma u+O\!\left(\frac{|u|^3}{\sqrt n}\right).
\end{align*}
Together with \eqref{eq:amplitude-uniform-bounds}, this implies
\[
  H_{n,\xi}(u;\rho_0)
  \longrightarrow
  a_\xi(\rho_0)e^{-\frac12u^T\Sigma u}
\]
for each fixed \(u\), uniformly over density matrices \(\rho_0\).
Moreover, \eqref{eq:lambda-gaussian-majorant} yields
\[
  \abs{H_{n,\xi}(u;\rho_0)}
  \leq C e^{-c|u|^2}.
\]
The right-hand side is integrable and independent of \(n,\xi,\rho_0\).
The uniform amplitude estimate in
\eqref{eq:amplitude-uniform-bounds}, followed by dominated convergence,
therefore gives the stronger statement
\begin{equation}\label{eq:uniform-L1-convergence}
 \sup_{\rho_0}
 \norm{
 H_{n,\xi}(\,\cdot\,;\rho_0)
 -a_\xi(\rho_0)e^{-\frac12(\,\cdot\,)^T
   \Sigma(\,\cdot\,)}
 }_{L^1(\R^r)}
 \longrightarrow0.
\end{equation}
For completeness, the uniformity follows by splitting the difference into
the eigenvalue part and the amplitude part.  The first is independent of
\(\rho_0\) up to a uniformly bounded factor; the second is bounded by
\(C\min\{|u|/\sqrt n,1\}e^{-c|u|^2}\).  Both terms converge to zero in
\(L^1\).

For any \(F,G\in L^1(\R^r)\),
\[
  \sup_{y\in\R^r}
  \left|\frac{1}{(2\pi)^r}
  \int_{\R^r}e^{-iu\cdot y}(F(u)-G(u))\,du\right|
  \leq\frac{\norm{F-G}_{L^1}}{(2\pi)^r}.
\]
Applying this inequality to \eqref{eq:uniform-L1-convergence} and using
\[
  \frac{1}{(2\pi)^r}\int_{\R^r}
  e^{-iu\cdot y}e^{-\frac12u^T\Sigma u}\,du
  =g_\Sigma(y),
\]
we obtain, uniformly in \(x\) and \(\rho_0\),
\[
  I_{n,\xi}(x;\rho_0)
  =\zeta_\xi^ne^{-i\xi\cdot x}
   a_\xi(\rho_0)g_\Sigma(y_{n,x})+o(1).
\]
Summing this identity over the finite set \(\Theta\), and adding the
exponentially small complementary contribution, gives exactly
\eqref{eq:uniform-llt}.

\smallskip
\noindent\emph{Step 4: centered return estimates.}
Assume \(m=0\).  The mask is uniformly bounded:
\[
  \sup_{n,x,\rho_0}\abs{D_{n,x}(\rho_0)}
  \leq\sum_{\xi\in\Theta}
       \sup_{\rho_0}\abs{a_\xi(\rho_0)}
  <\infty.
\]
Since \(g_\Sigma\) is bounded, \eqref{eq:uniform-llt} implies, for all
sufficiently large \(n\),
\[
  \sup_{\rho_0}\sup_{x\in\Z^r}p_n(x)\leq Cn^{-r/2}.
\]
Increasing \(C\) covers the finitely many smaller values of \(n\).  Hence,
for finite \(K\),
\[
  \PP_{\rho^{(0)}}(X_n\in K)
  =\sum_{x\in K}p_n(x)
  \leq |K|Cn^{-r/2},
\]
which proves \eqref{eq:centered-upper-return}, uniformly in \(\rho_0\).

At \(x=0\), the periodic mask depends only on \(n\bmod q\):
\[
  D_{n,0}(\rho_0)=D_\ell(\rho_0)
  \qquad\text{when }n\equiv\ell\pmod q.
\]
Since \(m=0\), \(g_\Sigma((0-nm)/\sqrt n)=g_\Sigma(0)\).
Thus \eqref{eq:uniform-llt}, restricted to the residue class
\(n\equiv\ell\pmod q\), gives
\[
  n^{r/2}p_n(0)
  =g_\Sigma(0)D_\ell(\rho_0)+o(1),
\]
which is \eqref{eq:residue-return-limit}.  The left-hand side is
nonnegative and \(g_\Sigma(0)>0\); consequently
\(D_\ell(\rho_0)\geq0\).

Next, \(\zeta_0=1\), \(a_0(\rho_0)=1\), and, for
\(\xi\neq0\), \(\zeta_\xi\) is a nontrivial \(q\)-th root of unity.
Therefore
\begin{align*}
  \frac1q\sum_{\ell=0}^{q-1}D_\ell(\rho_0)
  &=\sum_{\xi\in\Theta}a_\xi(\rho_0)
    \left(\frac1q\sum_{\ell=0}^{q-1}\zeta_\xi^\ell\right)\\
  &=a_0(\rho_0)=1.
\end{align*}
This proves \eqref{eq:mask-mean}.

We now justify the two cumulative asymptotics.  Write
\[
  D_{n,0}(\rho_0)=1+b_n(\rho_0).
\]
The sequence \(b_n(\rho_0)\) is \(q\)-periodic and has mean zero over a
period.  Consequently, its partial sums
\[
  B_N(\rho_0):=\sum_{n=1}^Nb_n(\rho_0)
\]
are uniformly bounded in \(N\) and in \(\rho_0\).  Abel summation gives, for
each decreasing sequence \(w_n>0\),
\[
  \sum_{n=1}^Nb_n(\rho_0)w_n
  =B_N(\rho_0)w_N
   +\sum_{n=1}^{N-1}B_n(\rho_0)(w_n-w_{n+1}).
\]
Taking \(w_n=n^{-1/2}\) and \(w_n=n^{-1}\), respectively, shows
\begin{align}
  \sum_{n=1}^ND_{n,0}(\rho_0)n^{-1/2}
  &=\sum_{n=1}^Nn^{-1/2}+O(1)
   =2N^{1/2}+O(1),
  \label{eq:periodic-sum-r1}\\
  \sum_{n=1}^ND_{n,0}(\rho_0)n^{-1}
  &=\sum_{n=1}^Nn^{-1}+O(1)
   =\log N+O(1),
  \label{eq:periodic-sum-r2}
\end{align}
uniformly in \(\rho_0\).

Finally, let
\[
  \varepsilon_n(\rho_0)
  :=n^{r/2}p_n(0)-g_\Sigma(0)D_{n,0}(\rho_0).
\]
The uniform local theorem says
\(\sup_{\rho_0}|\varepsilon_n(\rho_0)|\to0\).  If
\(A_N^{(\alpha)}:=\sum_{n=1}^Nn^{-\alpha}\) with
\(\alpha\in\{1/2,1\}\), then
\[
  \sup_{\rho_0}
  \abs{\sum_{n=1}^N\varepsilon_n(\rho_0)n^{-\alpha}}
  =o\!\left(A_N^{(\alpha)}\right).
\]
Indeed, after choosing \(n_0\) so that
\(\sup_{\rho_0}|\varepsilon_n(\rho_0)|\leq\epsilon\) for \(n\geq n_0\),
the initial segment is \(O(1)\) and the remaining sum is at most
\(\epsilon A_N^{(\alpha)}\).  Combining this estimate with
\eqref{eq:periodic-sum-r1}--\eqref{eq:periodic-sum-r2} proves
\eqref{eq:cumulative-return-r1}--\eqref{eq:cumulative-return-r2}.

\smallskip
\noindent\emph{Step 5: a real Laplace deformation.}
In the setting of \cite{CarbonePautrat15}, the exponential-return conclusion
also follows from its concentration estimate.  The following argument gives constants
uniform over all initial density matrices.
Suppose \(m\neq0\), and put \(e=m/|m|\).  For
\(t\in\R^r\) near \(0\), set
\begin{equation}\label{eq:laplace-deformation}
  \Psi_t(\sigma):=\sum_{s\in S}e^{t\cdot s}L_s\sigma L_s^*.
\end{equation}
The same path expansion as in Step~1, now with positive
weights instead of characters, gives
\begin{equation}\label{eq:laplace-transform-identity}
  \E_{\rho^{(0)}}e^{t\cdot X_n}
  =\Tr\bigl(\Psi_t^n(\rho_0)\bigr).
\end{equation}
Indeed, each path with total displacement \(X_n\) acquires the product
\(\prod_{k=1}^ne^{t\cdot s_k}=e^{t\cdot X_n}\).

The map \(\Psi_t\) is completely positive, with Kraus operators
\((e^{t\cdot s/2}L_s)_{s\in S}\).  Since these scalar factors are strictly
positive, the Kraus words that witness primitivity are exactly the same as
for \(\cL\).  Hence \(\Psi_t\) is primitive for all real \(t\) sufficiently
close to \(0\).  Perron--Frobenius theory for primitive positive maps gives
a positive algebraically simple spectral radius \(\mu(t)\), analytic near
\(0\), with \(\mu(0)=1\); the remainder of the spectrum stays uniformly
separated from \(\mu(t)\).

Let \(\rho(t)\) be the corresponding analytic right eigenvector, normalized
by \(\Tr(\rho(t))=1\):
\[
  \Psi_t(\rho(t))=\mu(t)\rho(t),
  \qquad \rho(0)=\rho_\infty.
\]
Differentiating in the \(j\)-th coordinate at \(t=0\) gives
\[
  (\partial_j\Psi_0)(\rho_\infty)
  +\cL(\partial_j\rho(0))
  =\partial_j\mu(0)\rho_\infty+\partial_j\rho(0).
\]
Taking traces, using trace preservation of \(\cL\) and
\(\Tr(\partial_j\rho(0))=0\), yields
\[
  \partial_j\mu(0)
  =\Tr\bigl((\partial_j\Psi_0)(\rho_\infty)\bigr)
  =\Tr\!\left(\sum_{s\in S}s_j
    L_s\rho_\infty L_s^*\right).
\]
On the other hand, \(\Psi_t=\Phi_{-it}\); analytic continuation of
\eqref{eq:lambda-expansion} at \(\xi=0\) identifies the last expression
with \(m_j\).  Thus, for
\(\Lambda(t):=\log\mu(t)\),
\begin{equation}\label{eq:laplace-gradient}
  \nabla\Lambda(0)=m.
\end{equation}
In fact, the same continuation gives the second-order expansion
\begin{equation}\label{eq:laplace-expansion}
  \Lambda(t)
  =m\cdot t+\frac12t^T\Sigma t+O(|t|^3).
\end{equation}
Taking \(t=-\gamma e\), we obtain, for sufficiently small \(\gamma>0\),
\[
  \Lambda(-\gamma e)
  =-\gamma|m|+\frac{\gamma^2}{2}e^T\Sigma e+O(\gamma^3)<0.
\]
Thus \(\mu(-\gamma e)<1\).  By choosing \(\gamma\) still smaller if
necessary, the complementary spectrum also lies in a disk of radius
strictly smaller than one.  Fix \(\kappa\in(0,1)\) larger than the modulus
of every eigenvalue of \(\Psi_{-\gamma e}\).  The resolvent formula for
powers gives
\[
  \norm{\Psi_{-\gamma e}^n}\leq C\kappa^n.
\]
Using positivity, \eqref{eq:laplace-transform-identity}, and boundedness of
the trace functional, we conclude, uniformly over density matrices
\(\rho_0\), that
\begin{equation}\label{eq:mgf-decay}
  \E_{\rho^{(0)}}e^{-\gamma e\cdot X_n}
  =\Tr\bigl(\Psi_{-\gamma e}^n(\rho_0)\bigr)
  \leq C\kappa^n=Ce^{-cn},
  \qquad c:=-\log\kappa>0.
\end{equation}

The assertion is trivial for \(K=\varnothing\).  For a nonempty finite set
\(K\), let
\[
  R_K:=\max_{x\in K}e\cdot x.
\]
On the event \(\{e\cdot X_n\leq R_K\}\),
\(e^{-\gamma e\cdot X_n}\geq e^{-\gamma R_K}\).  Markov's inequality and
\eqref{eq:mgf-decay} therefore give
\[
  \PP_{\rho^{(0)}}(X_n\in K)
  \leq\PP_{\rho^{(0)}}(e\cdot X_n\leq R_K)
  \leq e^{\gamma R_K}\E_{\rho^{(0)}}e^{-\gamma e\cdot X_n}
  \leq Ce^{\gamma R_K}e^{-cn}.
\]
Taking \(C_K:=Ce^{\gamma R_K}\) and \(c_K:=c\), and increasing \(C_K\) if
needed to cover finitely many small \(n\), proves
\eqref{eq:drift-exponential-return}.  Every constant used in this step is
independent of \(\rho_0\), completing the proof of the asserted uniformity.
\end{proof}

\begin{corollary}
\label{cor:one-component-return}
Under Assumption~\ref{ass:fourier}, the walk is spatially transient when
\(m\neq0\).  If \(m=0\), it is spatially transient for \(r\geq3\) and
recurrent at the origin for \(r\in\{1,2\}\).
\end{corollary}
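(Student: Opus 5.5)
The corollary follows by summing the bounds of Theorem~\ref{thm:uniform-llt} over \(n\).  By \eqref{eq:green-potential}, spatial transience from \(\rho^{(0)}=\rho_0\otimes|0\rangle\langle0|\) means that the Green occupation series
\[
  G_{\rho^{(0)}}(K)=\sum_{n\geq0}\PP_{\rho^{(0)}}(X_n\in K)
\]
is finite for every finite \(K\subset\Z^r\).  Recurrence at the origin means that the same series diverges for \(K=\{0\}\).  Everything therefore reduces to comparing the series with the estimates in parts (ii) and (iii) of the theorem.

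\emph{Nonzero drift.}  Suppose \(m\neq0\) and fix a finite \(K\).  By \eqref{eq:drift-exponential-return},
\[
  G_{\rho^{(0)}}(K)\leq\sum_{n\geq0}C_Ke^{-c_Kn}=\frac{C_K}{1-e^{-c_K}}<\infty.
\]
This holds for every finite \(K\), and the constant is uniform in \(\rho_0\), so the walk is spatially transient in every effective dimension.

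\emph{Centered walk, \(r\geq3\).}  Suppose \(m=0\) and \(r\geq3\).  The \(n=0\) term of the series is at most \(1\).  For \(n\geq1\), \eqref{eq:centered-upper-return} gives
\[
  \sum_{n\geq1}\PP_{\rho^{(0)}}(X_n\in K)\leq C_K\sum_{n\geq1}n^{-r/2}<\infty,
\]
since \(r/2>1\).  Hence \(G_{\rho^{(0)}}(K)<\infty\) for every finite \(K\), and the walk is again spatially transient.

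\emph{Centered walk, \(r\in\{1,2\}\).}  Suppose \(m=0\) and \(r\leq2\).  Here I use the cumulative asymptotics \eqref{eq:cumulative-return-r1} and \eqref{eq:cumulative-return-r2}.  Since \(g_\Sigma(0)>0\), the partial sums \(\sum_{n=1}^N p_n(0)\) grow like \(2g_\Sigma(0)N^{1/2}\) when \(r=1\) and like \(g_\Sigma(0)\log N\) when \(r=2\).  Both tend to infinity, so \(G_{\rho^{(0)}}(\{0\})=\infty\) and the walk is recurrent at the origin.

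It is worth noting why the cumulative asymptotics are needed rather than the residue-class limits \eqref{eq:residue-return-limit}.  Some individual masks \(D_\ell(\rho_0)\) may vanish.  The mean-one property \eqref{eq:mask-mean} guarantees that at least one residue class \(\ell\) has \(D_\ell(\rho_0)>0\), and that class alone would already force divergence.  The cumulative statements encode this automatically, so no separate argument is required.

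There is no genuine obstacle: the analytic work, in particular the uniformity in \(\rho_0\) and the mean-one periodic mask, was done in Theorem~\ref{thm:uniform-llt}.  The only care needed is to apply the results as stated, that is, for a localized initial state \eqref{eq:localized-initial-state} and at the origin.
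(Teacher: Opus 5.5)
Your proposal is correct and follows the paper's proof: sum the exponential bound \eqref{eq:drift-exponential-return} when \(m\neq0\), sum the \(n^{-r/2}\) bound \eqref{eq:centered-upper-return} when \(m=0\) and \(r\geq3\), and use the divergent cumulative asymptotics \eqref{eq:cumulative-return-r1}--\eqref{eq:cumulative-return-r2} when \(r\in\{1,2\}\). Your side remark that a single residue class with \(D_\ell(\rho_0)>0\) would already force divergence is also valid, though not needed.
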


\begin{proof}
For nonzero drift, sum \eqref{eq:drift-exponential-return}.  In the centered
case, \eqref{eq:centered-upper-return} is summable exactly when \(r/2>1\),
while \eqref{eq:cumulative-return-r1}--\eqref{eq:cumulative-return-r2}
give divergence when \(r\in\{1,2\}\).
\end{proof}

\begin{corollary}
\label{cor:tom-origin-recurrence}
Under Assumption~\ref{ass:fourier}, if \(m=0\) and
\(r\in\{1,2\}\), then \(P_0\) is recurrent in the TOM sense.
\end{corollary}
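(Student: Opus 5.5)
Fix a unit vector \(u\in\Ran(P_0)\).  Since \(P_0=I_{\cH}\otimes|0\rangle\langle0|\), we can write \(u=v\otimes|0\rangle\) with \(v\in\cH\), \(\norm{v}=1\).  The plan is to reduce the TOM pairing to a spatial occupation series and then apply Theorem~\ref{thm:uniform-llt}(ii).  Nonzero vectors are handled by rescaling, since divergence of \(\sum_n\ip{u}{T^n(P_0)u}\) is invariant under multiplication of \(u\) by a nonzero scalar.

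\emph{Step 1 (reduction to occupation).}  The vector state of \(u\) is the block-diagonal density \(\rho^{(0)}=|v\rangle\langle v|\otimes|0\rangle\langle0|\).  This is the localized state \eqref{eq:localized-initial-state} with \(\rho_0=|v\rangle\langle v|\).  The operator \(T^n(P_0)\) lies in the diagonal algebra \(\algA\), so
\[
  \ip{u}{T^n(P_0)u}=\Tr\bigl(\rho^{(0)}T^n(P_0)\bigr)=\PP_{\rho^{(0)}}(X_n=0)=p_n(0),
\]
by \eqref{eq:occupation-duality} with \(K=\{0\}\).  Hence \(\sum_n\ip{u}{T^n(P_0)u}=G_{\rho^{(0)}}(\{0\})\).

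\emph{Step 2 (divergence).}  Apply \eqref{eq:cumulative-return-r1} for \(r=1\) and \eqref{eq:cumulative-return-r2} for \(r=2\) with this \(\rho_0\).  The partial sums \(\sum_{n\le N}p_n(0)\) grow like \(2g_\Sigma(0)N^{1/2}\) or \(g_\Sigma(0)\log N\).  Both diverge because \(g_\Sigma(0)>0\), which holds since \(\Sigma\) is positive definite.  Therefore the potential of \(P_0\) diverges on every nonzero \(u\in\Ran(P_0)\), which is the TOM recurrence in Definition~\ref{def:spatial-recurrence}.

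\emph{Main obstacle.}  The key point is that the result must hold for every internal vector \(v\), including those where the periodic mask \(D_\ell(\rho_0)\) may vanish on some residue classes.  That difficulty is already resolved by the mean-one identity \eqref{eq:mask-mean}: \(D_\ell\ge0\) and averages to one, so at least one residue class carries a positive mass.  This was used in the Abel-summation argument behind the cumulative asymptotics, and those asymptotics hold for every density matrix \(\rho_0\), not only faithful ones.  Beyond that, the only thing to check is that pure states \(|v\rangle\langle v|\) are admissible density matrices in Theorem~\ref{thm:uniform-llt}, which they are.
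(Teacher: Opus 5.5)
Your proposal is correct and follows essentially the same route as the paper's proof. Both write \(u=v\otimes|0\rangle\) and use the occupation duality \eqref{eq:occupation-duality} to identify \(\ip{u}{T^n(P_0)u}\) with \(\norm{v}^2\) times the return probability from the pure localized state \(|v\rangle\langle v|/\norm{v}^2\). Both then conclude divergence from the cumulative asymptotics \eqref{eq:cumulative-return-r1}--\eqref{eq:cumulative-return-r2}, which hold for every density matrix with the strictly positive leading constant \(g_\Sigma(0)\).
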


\begin{proof}
Every nonzero \(u\in\Ran(P_0)\) has the form
\(u=\eta\otimes|0\rangle\) with \(0\neq\eta\in\cH\).  Set
\[
  \rho_\eta:=\frac{|\eta\rangle\langle\eta|}{\norm{\eta}^2}.
\]
By the duality formula \eqref{eq:occupation-duality},
\[
  \ip{u}{T^n(P_0)u}
  =\norm{\eta}^2
    \PP_{\rho_\eta\otimes|0\rangle\langle0|}(X_n=0).
\]
The cumulative asymptotics
\eqref{eq:cumulative-return-r1}--\eqref{eq:cumulative-return-r2} apply to
every density matrix \(\rho_\eta\), and their leading constants are strictly
positive.  Thus the sum over \(n\) diverges for every nonzero
\(u\in\Ran(P_0)\), which is precisely TOM recurrence of \(P_0\).
\end{proof}

\subsection{Green functions and potential kernels}

The periodic local theorem decides summability.  The resolvent contains
strictly more information: its singularity at the stationary Fourier point
determines the spatial profile of the potential.  For the localized state
\eqref{eq:localized-initial-state}, write
\begin{equation}\label{eq:point-green-function}
  G_{\rho_0}(x):=\sum_{n=0}^\infty p_n(x).
\end{equation}
In the recurrent dimensions define, whenever the series converges, the
potential kernel
\begin{equation}\label{eq:potential-kernel}
  \mathfrak a_{\rho_0}(x)
  :=\sum_{n=0}^\infty\bigl(p_n(0)-p_n(x)\bigr).
\end{equation}
The series in \eqref{eq:potential-kernel} need not be absolutely convergent.

\begin{lemma}
\label{lem:fourier-singularity}
Let \(\Sigma\) be positive definite and set
\[
  R_\Sigma(x):=(x^T\Sigma^{-1}x)^{1/2}.
\]
Let \(\chi\) be a real-valued smooth even cutoff supported in a sufficiently small
neighborhood of \(0\) and equal to \(1\) near \(0\).  Then, as
\(R_\Sigma(x)\to\infty\),
\begin{align}
 \frac{1}{(2\pi)^r}\int_{\R^r}
 e^{-i\theta\cdot x}\frac{2\chi(\theta)}
 {\theta^T\Sigma\theta}\,d\theta
 &=
 \frac{\Gamma(\frac r2-1)}
 {2\pi^{r/2}\sqrt{\det\Sigma}}\,
 R_\Sigma(x)^{2-r}
 +O\!\left(R_\Sigma(x)^{1-r}\right),
 &&r\geq3,
 \label{eq:riesz-asymptotic}\\
 \frac{1}{2\pi}\operatorname{pv}\!\int_{\R}
 (1-e^{-i\theta x})\frac{2\chi(\theta)}
 {\Sigma\theta^2}\,d\theta
 &=\frac{|x|}{\Sigma}+O(1),
 &&r=1,
 \label{eq:linear-kernel}\\
 \frac{1}{(2\pi)^2}\int_{\R^2}
 (1-e^{-i\theta\cdot x})\frac{2\chi(\theta)}
 {\theta^T\Sigma\theta}\,d\theta
 &=\frac{1}{\pi\sqrt{\det\Sigma}}
   \log R_\Sigma(x)+O(1),
 &&r=2.
 \label{eq:log-kernel}
\end{align}
If a remainder \(R\) is supported in the same neighborhood, is smooth away
from \(0\), and satisfies
\[
  |\partial^\alpha R(\theta)|
  \leq C_\alpha|\theta|^{-1-|\alpha|}
\]
near \(0\), and obeys \(R(-\theta)=\overline{R(\theta)}\), then its Fourier
coefficient is \(O(R_\Sigma(x)^{1-r})\) for \(r\geq3\).  In dimension two
its contribution to \eqref{eq:log-kernel} is \(O(1)\).  The same conclusion
holds in dimension one provided
\[
  R(\theta)=\frac{ic}{\theta}+O(1)
  \quad(\theta\to0)
\]
for some \(c\in\R\); the integral is then interpreted in the principal-value
sense.
\end{lemma}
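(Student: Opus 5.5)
The plan is to reduce everything to the isotropic case by a linear change of variables, compare the cutoff integrals with exact Fourier transforms of homogeneous distributions, and treat the cutoff and the remainder \(R\) as smoother errors.

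\emph{Change of variables.} Put \(\theta=\Sigma^{-1/2}\phi\). Then \(\theta^T\Sigma\theta=|\phi|^2\), \(\theta\cdot x=\phi\cdot\Sigma^{-1/2}x\) and \(d\theta=(\det\Sigma)^{-1/2}d\phi\). Since \(|\Sigma^{-1/2}x|=R_\Sigma(x)\), this produces the factor \((\det\Sigma)^{-1/2}\), replaces \(x\) by \(y:=\Sigma^{-1/2}x\) with \(|y|=R_\Sigma(x)\), and replaces the cutoff by \(\tilde\chi(\phi)=\chi(\Sigma^{-1/2}\phi)\). The new cutoff is still smooth, even, and equal to \(1\) near \(0\).

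\emph{The case \(r\geq3\).} Write \(\tilde\chi=1-(1-\tilde\chi)\). The distributional Fourier transform of \(2|\phi|^{-2}\) is
\[
\frac{1}{(2\pi)^r}\int e^{-i\phi\cdot y}\,\frac{2}{|\phi|^2}\,d\phi=\frac{\Gamma(\frac r2-1)}{2\pi^{r/2}}\,|y|^{2-r}.
\]
This is the Newtonian constant: for \(r=3\) it gives \(1/(2\pi|y|)\), consistent with \(2\cdot\frac{1}{4\pi|y|}\).

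The complementary piece \((1-\tilde\chi)2|\phi|^{-2}\) is smooth and is a classical symbol of order \(-2\). Integrating by parts \(N\) times against \(e^{-i\phi\cdot y}\) shows its Fourier transform is \(O(|y|^{-N})\) away from \(y=0\). This yields \eqref{eq:riesz-asymptotic}, in fact with a better error.

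For the remainder \(R\), I would use a dyadic decomposition \(R=\sum_k R\,\psi(2^k\phi)\). Each piece has Fourier transform bounded by
\[
\min\bigl(2^{-k(r-1)},\ 2^{-k(r-1)}(2^{-k}|y|)^{-N}\bigr),
\]
using the symbol bounds \(|\partial^\alpha R|\le C_\alpha|\theta|^{-1-|\alpha|}\). Summing over \(k\) gives \(O(|y|^{1-r})\). The reality condition \(R(-\theta)=\overline{R(\theta)}\) only guarantees that the coefficient is real.

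\emph{The cases \(r=1,2\).} Here \(1-e^{-i\phi\cdot y}\) vanishes to first order at \(0\), and by evenness of the cutoff the linear term integrates to zero in the principal-value sense. So the integrals converge.

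For \(r=1\) I would compute directly:
\[
\frac1\pi\int(1-\cos\phi y)\,\frac{\tilde\chi(\phi)}{\phi^2}\,d\phi=\frac1\pi\int\frac{1-\cos\phi y}{\phi^2}\,d\phi+O(1)=|y|+O(1).
\]
Both the error from replacing \(\tilde\chi\) by \(1\) and the tail contribute \(O(1)\), since \((1-\cos)\) is bounded. Undoing the scaling gives \(|x|/\Sigma\); in \(r=1\) we have \(R_\Sigma(x)=|x|/\sqrt\Sigma\) and prefactor \(1/\sqrt\Sigma\).

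For \(r=2\), use polar coordinates, \(\tilde\chi=1\) on \(|\phi|<\epsilon\), and average \(1-\cos\) over angles to get \(1-J_0(\rho|y|)\). Then
\[
\frac{1}{(2\pi)^2}\cdot 2\cdot 2\pi\int_0^\epsilon\frac{1-J_0(\rho|y|)}{\rho}\,d\rho=\frac1\pi\bigl(\log|y|+O(1)\bigr).
\]
The region \(|\phi|>\epsilon\) contributes \(O(1)\).

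For the remainder in \(r=2\): \(\int(1-e^{-i\phi\cdot y})R\) is \(O(1)\). To see this, split at \(|\phi|=|y|^{-1}\). Near zero use \(|1-e|\le|\phi||y|\) with \(|R|\le C/|\phi|\). Far away, the term containing \(1\) is integrable since \(|\phi|^{-1}\) is integrable in 2D, and the oscillatory term is controlled by one integration by parts.

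For the remainder in \(r=1\): the \(ic/\theta\) part gives
\[
\frac{ic}{2\pi}\,\mathrm{pv}\!\int\frac{(1-e^{-i\theta x})\chi(\theta)}{\theta}\,d\theta,
\]
which tends to a bounded multiple of \(\operatorname{sign}(x)\) (Dirichlet integral), hence is \(O(1)\). The \(O(1)\) part is trivially bounded.

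\emph{Main obstacle.} The hardest step should be the uniform dyadic estimate for \(R\) when \(r\geq3\): one must obtain exactly \(|y|^{1-r}\) without logarithmic losses. The dyadic summation above handles this because the order \(-1\) is strictly above \(-r\).
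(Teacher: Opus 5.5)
Your proposal is correct and follows essentially the same route as the paper: the isotropic reduction by \(\Sigma^{1/2}\), the exact Riesz transform plus a smooth cutoff correction for \(r\geq3\), the explicit \(\int(1-\cos)/u^2\) and Bessel \(J_0\) computations for \(r=1,2\), a dyadic decomposition of the remainder for \(r\geq3\), and the Dirichlet integral for the \(ic/\theta\) part when \(r=1\). The differences are cosmetic. You correctly obtain \(O(|y|^{-N})\) for the cutoff correction, whereas the paper stops after \(r-1\) integrations by parts. In dimension two your split at \(|\phi|=|y|^{-1}\) is unnecessary, since \(R\in L^1(\R^2)\) already bounds that contribution by \(2\norm{R}_{L^1}\).
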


\begin{proof}
Let \(A=\Sigma^{1/2}\), chosen positive and symmetric, and set
\[
  u=A\theta,\qquad y=A^{-1}x,\qquad
  \chi_\Sigma(u):=\chi(A^{-1}u).
\]
Then \(R_\Sigma(x)=|y|\), \(d\theta=(\det\Sigma)^{-1/2}du\), and
\(\theta\cdot x=u\cdot y\).  Thus every principal integral is reduced to
the isotropic multiplier \(2|u|^{-2}\), followed by multiplication by
\((\det\Sigma)^{-1/2}\).

\smallskip
\noindent\emph{The transient dimensions \(r\geq3\).}
For \(y\neq0\), the Fourier transform of the full Riesz multiplier can be
computed from
\[
  \frac{2}{|u|^2}=2\int_0^\infty e^{-t|u|^2}\,dt
\]
and the Gaussian Fourier transform.  Namely,
\begin{align*}
 \frac{1}{(2\pi)^r}\int_{\R^r}
 e^{-iu\cdot y}\frac{2}{|u|^2}\,du
 &=2\int_0^\infty
   (4\pi t)^{-r/2}
   e^{-|y|^2/(4t)}\,dt\\
 &=\frac{\Gamma(\frac r2-1)}{2\pi^{r/2}}
   |y|^{2-r}.
\end{align*}
The first identity may equivalently be read in the sense of tempered
distributions.  The last equality follows from
\(s=|y|^2/(4t)\).

It remains to estimate the effect of the cutoff.  Put
\[
  h(u):=\frac{2(\chi_\Sigma(u)-1)}{|u|^2}.
\]
Because \(\chi_\Sigma=1\) near \(0\), \(h\) is smooth there.  At infinity,
\[
  |\partial^\alpha h(u)|
  \leq C_\alpha(1+|u|)^{-2-|\alpha|}.
\]
In particular, every derivative of order \(r-1\) is integrable.  If
\(\omega=y/|y|\), distributional integration by parts \(r-1\) times gives
\begin{align*}
 \left|\frac{1}{(2\pi)^r}
 \int_{\R^r}e^{-iu\cdot y}h(u)\,du\right|
 &=
 \frac{|y|^{1-r}}{(2\pi)^r}
 \left|\int_{\R^r}e^{-iu\cdot y}
   (\omega\cdot\nabla)^{r-1}h(u)\,du\right|\\
 &\leq C|y|^{1-r}.
\end{align*}
Combining this bound with the full Riesz transform, and then restoring the
factor \((\det\Sigma)^{-1/2}\), proves
\eqref{eq:riesz-asymptotic}.

\smallskip
\noindent\emph{The one-dimensional kernel.}
Here \(u=\sqrt\Sigma\,\theta\) and \(y=x/\sqrt\Sigma\).  First,
\begin{align*}
 \frac{1}{2\pi}\operatorname{pv}\!\int_{\R}
 (1-e^{-iuy})\frac{2}{u^2}\,du
 &=\frac{1}{\pi}\int_{\R}\frac{1-\cos(uy)}{u^2}\,du\\
 &=|y|.
\end{align*}
The imaginary part vanishes in the principal-value sense, and the last
identity follows by scaling and
\(\int_{\R}(1-\cos v)v^{-2}\,dv=\pi\).  Moreover,
\[
  \frac{2(\chi_\Sigma(u)-1)}{u^2}\in L^1(\R),
\]
so multiplying it by \(1-e^{-iuy}\) changes the preceding expression by
at most a constant independent of \(y\).  The Jacobian contributes
\(\Sigma^{-1/2}\), and hence
\(\Sigma^{-1/2}|y|=|x|/\Sigma\).  This proves
\eqref{eq:linear-kernel}.

\smallskip
\noindent\emph{The two-dimensional kernel.}
Choose a radial cutoff \(\chi_0(u)=\varphi(|u|)\) which equals \(1\) near
\(0\).  Since
\[
  \frac{\chi_\Sigma-\chi_0}{|u|^2}\in L^1(\R^2),
\]
replacing \(\chi_\Sigma\) by \(\chi_0\) changes the potential integral by
\(O(1)\), uniformly in \(y\).  Polar coordinates, followed by angular
integration, give
\begin{align*}
 &\frac{1}{(2\pi)^2}\int_{\R^2}
 (1-e^{-iu\cdot y})\frac{2\chi_0(u)}{|u|^2}\,du\\
 &\qquad
 =\frac1\pi\int_0^\infty
   \varphi(\rho)\frac{1-J_0(\rho|y|)}{\rho}\,d\rho,
\end{align*}
where \(J_0\) is the Bessel function of order zero.  If
\(\varphi=1\) on \([0,a]\) and is supported in \([0,b]\), then the change
of variables \(s=\rho|y|\) shows that the last integral equals
\[
  \frac1\pi\int_0^{a|y|}
   \frac{1-J_0(s)}{s}\,ds+O(1).
\]
The integral over \(0<s<1\) is finite because
\(1-J_0(s)=O(s^2)\).  On \(s\geq1\), the estimate
\(J_0(s)=O(s^{-1/2})\) implies that
\(\int_1^\infty J_0(s)s^{-1}\,ds\) converges.  Therefore
\[
  \int_0^{a|y|}\frac{1-J_0(s)}s\,ds
  =\log|y|+O(1).
\]
Restoring the determinant factor and using \(|y|=R_\Sigma(x)\) proves
\eqref{eq:log-kernel}.

\smallskip
\noindent\emph{The symbol remainder.}
It suffices to treat the part of \(R\) near zero; a smooth part supported
away from zero has Fourier coefficients \(O_N(|x|^{-N})\) by repeated
integration by parts.  Choose a smooth annular function \(\psi\) giving a
dyadic partition of a punctured neighborhood of zero, and write
\[
  R=\sum_{j\geq j_0}R_j+R_{\mathrm{sm}},
  \qquad
  R_j(\theta):=\psi(2^j\theta)R(\theta).
\]
The symbol bounds imply that
\[
  b_j(v):=2^{-j}\psi(v)R(2^{-j}v)
\]
is supported in one fixed annulus and is bounded there in every
\(C^N\)-norm, uniformly in \(j\).  After the change
\(\theta=2^{-j}v\), integration by parts gives
\begin{equation}\label{eq:dyadic-remainder-bound}
 \left|\frac{1}{(2\pi)^r}\int_{\R^r}
  e^{-i\theta\cdot x}R_j(\theta)\,d\theta\right|
 \leq C_N2^{-j(r-1)}
       (1+2^{-j}|x|)^{-N}.
\end{equation}
Choose \(J\) with \(2^J\leq1+|x|<2^{J+1}\).  For \(j\geq J\), summing
\eqref{eq:dyadic-remainder-bound} without the oscillatory gain gives
\[
  \sum_{j\geq J}2^{-j(r-1)}
  =O(|x|^{1-r}).
\]
For \(j<J\), choose \(N>r-1\); then
\[
 \sum_{j< J}2^{-j(r-1)}
 (2^{-j}|x|)^{-N}
 \leq C|x|^{-N}2^{J(N-r+1)}
 =O(|x|^{1-r}).
\]
This proves the Fourier-coefficient assertion for \(r\geq3\).  Since
\(|x|\) and \(R_\Sigma(x)\) are equivalent norms, the stated anisotropic
bound follows.

When \(r=2\), the estimate \(R(\theta)=O(|\theta|^{-1})\) makes \(R\)
integrable near zero.  Smoothness away from zero and compact support then
give \(R\in L^1(\R^2)\).  Thus
\[
 \left|\int_{\R^2}(1-e^{-i\theta\cdot x})R(\theta)\,d\theta\right|
 \leq2\norm{R}_{L^1}=O(1).
\]
When \(r=1\), choose an even cutoff \(\chi_1\) equal to \(1\) near zero and
write
\[
  R(\theta)=\frac{ic\,\chi_1(\theta)}{\theta}+R_0(\theta),
  \qquad R_0\in L^1(\R).
\]
The \(R_0\)-contribution is bounded.  For the singular part,
\[
 \operatorname{pv}\!\int_{\R}
 (1-e^{-i\theta x})\frac{\chi_1(\theta)}{\theta}\,d\theta
 =2i\int_0^\infty\frac{\sin(\theta x)}{\theta}
   \chi_1(\theta)\,d\theta,
\]
which is uniformly bounded in \(x\) by the Dirichlet test.  This completes
the proof.
\end{proof}

\begin{theorem}
\label{thm:green-potential-asymptotics}
Assume Assumption~\ref{ass:fourier} and \(m=0\).  As
\(|x|\to\infty\) through \(x\in\Z^r\), the following statements hold.
\begin{enumerate}[label=\textup{(\roman*)},leftmargin=2.4em]
\item If \(r\geq3\), then \(G_{\rho_0}(x)<\infty\) and
\begin{equation}\label{eq:green-asymptotic}
  G_{\rho_0}(x)
  =
  \frac{\Gamma(\frac r2-1)}
  {2\pi^{r/2}\sqrt{\det\Sigma}}\,
  R_\Sigma(x)^{2-r}
  +O\!\left(R_\Sigma(x)^{1-r}\right).
\end{equation}

\item If \(r=1\), the series \eqref{eq:potential-kernel} converges and
\begin{equation}\label{eq:potential-r1}
  \mathfrak a_{\rho_0}(x)
  =\frac{|x|}{\Sigma}+O(1).
\end{equation}

\item If \(r=2\), the series \eqref{eq:potential-kernel} converges and
\begin{equation}\label{eq:potential-r2}
  \mathfrak a_{\rho_0}(x)
  =\frac{1}{\pi\sqrt{\det\Sigma}}
   \log R_\Sigma(x)+O(1).
\end{equation}
\end{enumerate}
The error bounds are uniform over all density matrices \(\rho_0\).
\end{theorem}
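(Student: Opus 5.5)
The plan is to write the Green function and the potential kernel as Fourier integrals of the resolvent trace
\[
  F_{z,\rho_0}(\theta):=\Tr\bigl((I-z\Phi_\theta)^{-1}(\rho_0)\bigr)
  =\sum_{n\ge0}z^n\Tr\bigl(\Phi_\theta^n(\rho_0)\bigr),\qquad 0<z<1.
\]
By \eqref{eq:fourier-inversion},
\[
  \sum_n z^np_n(x)=(2\pi)^{-r}\int_{\mathbb T^r}e^{-i\theta\cdot x}F_{z,\rho_0}(\theta)\,d\theta .
\]
I will then analyse \(F_{1,\rho_0}\) through the spectral decomposition \eqref{eq:spectral-decomposition} and let \(z\uparrow1\). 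Away from \(\Theta\), \eqref{eq:away-origin-bound} makes \((I-z\Phi_\theta)^{-1}\) smooth and uniformly bounded up to \(z=1\). Near \(\xi\neq0\), \(\Theta\ni\xi\), the branch satisfies \(\lambda_\xi(\xi)=\zeta_\xi\neq1\). Hence \(1-z\lambda_\xi(\xi+\eta)\) stays away from \(0\), and the resolvent is again smooth; this is exactly where the hypothesis \(\zeta_\xi\ne1\) in (A2) is used. All these pieces are smooth on the torus, so they contribute \(O_N(|x|^{-N})\) to Fourier coefficients, uniformly in \(\rho_0\) by linearity.

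Near \(\theta=0\) we have
\[
  F_{1,\rho_0}(\theta)=\frac{a_0(\theta;\rho_0)}{1-\lambda_0(\theta)}+(\text{smooth}),
\]
where \(a_0(\theta;\rho_0)\) is the amplitude \eqref{eq:amplitude} at \(\xi=0\). With \(m=0\), (A3) gives
\[
  1-\lambda_0(\theta)=\tfrac12\theta^T\Sigma\theta+O(|\theta|^3).
\]
Combining this with \(a_0(\theta;\rho_0)=1+O(|\theta|)\) from \eqref{eq:amplitude-uniform-bounds}, I obtain
\[
  \chi(\theta)F_{1,\rho_0}(\theta)=\frac{2\chi(\theta)}{\theta^T\Sigma\theta}+R(\theta).
\]
The remainder \(R\) is a quotient of analytic functions, and it satisfies the symbol bounds \(|\partial^\alpha R|\le C_\alpha|\theta|^{-1-|\alpha|}\); I will verify this by writing \(R=(\text{analytic }O(|\theta|^3)\text{ or }O(|\theta|))\cdot(\theta^T\Sigma\theta)^{-1}\cdot(\text{smooth})\). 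The symmetry \(R(-\theta)=\overline{R(\theta)}\) follows from \(\Phi_{-\theta}(\sigma)=\Phi_\theta(\sigma^*)^*\) and self-adjointness of \(\rho_0\), so \(F_{1,\rho_0}(-\theta)=\overline{F_{1,\rho_0}(\theta)}\). In \(r=1\) I additionally need \(R(\theta)=ic/\theta+O(1)\). The first-order terms come from \(\partial a_0(0)\) and from the cubic term of \(\lambda_0\); both give \((\text{purely imaginary})\cdot\theta/\theta^2\) by the conjugation symmetry, which is the required form.

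Given this decomposition, the three cases follow as below.

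For \(r\ge3\), \(F_{1,\rho_0}\) is integrable, since \(|\theta|^{-2}\in L^1\). Monotone/dominated convergence as \(z\uparrow1\) gives \(G_{\rho_0}(x)=(2\pi)^{-r}\int e^{-i\theta\cdot x}F_{1,\rho_0}\), which is finite. Lemma~\ref{lem:fourier-singularity}, applied via \eqref{eq:riesz-asymptotic} plus the remainder bound, yields \eqref{eq:green-asymptotic}. The step from the torus integral to the \(\R^r\) integral is harmless because \(\chi\) has small support.

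For \(r=1,2\), I write
\[
  \sum_{n\le N}(p_n(0)-p_n(x))=(2\pi)^{-r}\int(1-e^{-i\theta\cdot x})\sum_{n\le N}\Tr\Phi_\theta^n(\rho_0)\,d\theta
\]
and pass \(N\to\infty\). I expect this convergence to be the main obstacle: the multiplier \((1-e^{-i\theta\cdot x})/\theta^T\Sigma\theta\) is integrable only in a principal-value sense when \(r=1\), and the series \eqref{eq:potential-kernel} need not be absolutely convergent. I would use the identity
\[
  \sum_{n\le N}\lambda^n=\frac{1-\lambda^{N+1}}{1-\lambda}
\]
and show that the term \(\lambda_0(\theta)^{N+1}(1-e^{-i\theta\cdot x})/(1-\lambda_0(\theta))\) tends to \(0\). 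By \eqref{eq:lambda-gaussian-majorant} it is bounded by
\[
  C e^{-cN|\theta|^2}\,\frac{|\theta||x|}{|\theta|^2}.
\]
This has integral \(O(|x|N^{(1-r)/2})\to0\) for \(r=2\), and also for \(r=1\) once the odd imaginary part is symmetrized (pairing \(\theta\) with \(-\theta\) kills the leading \(1/\theta\) term, leaving an \(O(1)\)-integrand times the Gaussian, which is \(O(N^{-1/2})\)). The periodic peripheral pieces and the rest of the torus contribute geometrically convergent sums. Hence the series converges to the limiting integral, and \eqref{eq:linear-kernel}–\eqref{eq:log-kernel} together with the remainder clause of Lemma~\ref{lem:fourier-singularity} give \eqref{eq:potential-r1}–\eqref{eq:potential-r2}.

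Uniformity in \(\rho_0\) holds throughout, because every object depends linearly on \(\rho_0\) through bounded trace functionals.
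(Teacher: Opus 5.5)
Your proposal is correct. Part (i) follows the paper's argument: the Abel resolvent, the splitting $F_{1,\rho_0}=2/(\theta^T\Sigma\theta)+R_{\rho_0}$ with order $-1$ symbol bounds and conjugation symmetry, dominated convergence as $z\uparrow1$, and then Lemma~\ref{lem:fourier-singularity}. The only deviation is that you read off finiteness of $G_{\rho_0}(x)$ from the integral representation rather than from \eqref{eq:centered-upper-return}.

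For (ii)--(iii) your route is genuinely different. The paper works in two stages. It first proves convergence of $\sum_n d_n(x)$ termwise: a scaled Gaussian expansion with cancellation of the odd linear term shows that the zero-mode part is $O_x(n^{-(r+2)/2})$, and the modes $\xi\neq0$ are handled by finite geometric sums. It then identifies the sum via Abel's theorem. That step needs a $t$-uniform domination of $(1-e^{-i\theta\cdot x})F_{t,\rho_0}$, and for $r=1$ the delicate expansion around $D_t(\theta)=\delta_t+\frac{t\Sigma}{2}\theta^2$. You apply the geometric-sum identity to the zero mode as well. The $N$-th partial sum then differs from the limiting integral \eqref{eq:ordinary-potential-resolvent} only by the integrals of the $\lambda^{N+1}$ terms, up to $O(\kappa^N)$. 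Convergence and identification come out in one step, with a rate, and neither Abel's theorem nor the $t$-uniform analysis is needed. The paper's route buys the extra fact that the zero-mode part of the series is absolutely summable, which isolates the conditional convergence in the oscillating modes $\xi\neq0$.

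Three steps of your sketch should be stated more carefully.

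\begin{itemize}
\item In $r=1$, pairing does not kill the leading term exactly, because $\operatorname{Im}\log\lambda_0(\theta)=O(|\theta|^3)$ need not vanish. The paired error is $-\frac{4x}{\Sigma\theta}\operatorname{Im}\lambda_0(\theta)^{N+1}+O_x(e^{-cN\theta^2})$. This is bounded by $C_x(1+N\theta^2)e^{-cN\theta^2}$, so it still integrates to $O(N^{-1/2})$. The split into limit and error must be made after pairing, since each piece separately behaves like $2ix/(\Sigma\theta)$ at $0$.
\item The contributions from $\xi\in\Theta\setminus\{0\}$ do not decay geometrically in $n$; they have modulus $1$ at $\eta=0$. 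You need the same geometric-sum identity with $|1-\lambda_\xi|\geq c_\xi$, together with \eqref{eq:lambda-gaussian-majorant}, to make the $\lambda_\xi^{N+1}$ term $O_x(N^{-r/2})$, exactly as in the paper.
\item In your symbol-bound factorization, the cofactor of $(\theta^T\Sigma\theta)^{-1}$ is a zeroth-order symbol rather than a smooth function, and the cubic piece carries $(\theta^T\Sigma\theta)^{-2}$. The bounds still follow.
\end{itemize}
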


\begin{proof}
\smallskip
\noindent\emph{Step 1: Abel resolvents.}
For \(0<t<1\), set
\[
  G_{t,\rho_0}(x):=\sum_{n=0}^\infty t^np_n(x).
\]
Since \(p_n(x)\leq1\), this series converges absolutely.  Moreover,
Lemma~\ref{lem:fourier-contraction} gives
\(r(t\Phi_\theta)\leq t<1\), so the operator Neumann series converges in finite
dimension:
\[
  (I-t\Phi_\theta)^{-1}
  =\sum_{n=0}^\infty t^n\Phi_\theta^n.
\]
Using \eqref{eq:fourier-inversion} and absolute convergence, we may
interchange the sum and the torus integral.  We obtain
\begin{equation}\label{eq:abel-resolvent}
  \sum_{n=0}^\infty t^np_n(x)
  =\frac{1}{(2\pi)^r}\int_{\mathbb T^r}
   e^{-i\theta\cdot x}
   F_{t,\rho_0}(\theta)\,d\theta,
  \qquad
  F_{t,\rho_0}(\theta)
  :=\Tr\!\left((I-t\Phi_\theta)^{-1}(\rho_0)\right).
\end{equation}

\smallskip
\noindent\emph{Step 2: the resolvent singularity at \(0\).}
If \(\theta\notin\Theta\), then \(r(\Phi_\theta)<1\), so
\(I-\Phi_\theta\) is invertible.  If
\(\xi\in\Theta\setminus\{0\}\), the only unit-modulus eigenvalue is
\(\zeta_\xi\neq1\); hence \(1\notin\spec(\Phi_\xi)\).  It follows, by
compactness, that \(I-\Phi_\theta\) is uniformly invertible outside every
fixed neighborhood of \(0\).

Near \(0\), use the spectral decomposition from Step~2 of
Theorem~\ref{thm:uniform-llt}.  On the punctured neighborhood,
\begin{align*}
 (I-\Phi_\theta)^{-1}
 &=
 \frac{1}{1-\lambda_0(\theta)}\Pi_{0,\theta}
 +(I-N_{0,\theta})^{-1}(I-\Pi_{0,\theta}).
\end{align*}
Consequently, if
\[
  a_0(\theta;\rho_0)
  :=\Tr\bigl(\Pi_{0,\theta}(\rho_0)\bigr),
\]
then
\begin{equation}\label{eq:resolvent-singularity}
  F_{1,\rho_0}(\theta)
  =\frac{a_0(\theta;\rho_0)}{1-\lambda_0(\theta)}
   +H_{\rho_0}(\theta)
  =\frac{2}{\theta^T\Sigma\theta}+R_{\rho_0}(\theta),
\end{equation}
where
\[
 H_{\rho_0}(\theta)
 :=\Tr\!\left(
 (I-N_{0,\theta})^{-1}
 (I-\Pi_{0,\theta})(\rho_0)\right)
\]
is analytic.  To verify the last equality and its remainder estimate, put
\[
  Q(\theta):=\frac12\theta^T\Sigma\theta.
\]
Because \(m=0\), \eqref{eq:lambda-expansion} and analyticity give
\[
  1-\lambda_0(\theta)=Q(\theta)+O(|\theta|^3),
  \qquad
  a_0(\theta;\rho_0)=1+O(|\theta|).
\]
Since \(Q(\theta)\geq c|\theta|^2\), we have, on the punctured
neighborhood,
\begin{align*}
 &\frac{a_0(\theta;\rho_0)}{1-\lambda_0(\theta)}
 -\frac1{Q(\theta)}\\
 &\quad
 =\frac{a_0(\theta;\rho_0)-1}{1-\lambda_0(\theta)}
 +\frac{Q(\theta)-(1-\lambda_0(\theta))}
 {Q(\theta)(1-\lambda_0(\theta))}
 =O(|\theta|^{-1}).
\end{align*}
Differentiating this quotient shows, for every multi-index \(\alpha\),
\begin{equation}\label{eq:resolvent-remainder-symbol}
  |\partial^\alpha R_{\rho_0}(\theta)|
  \leq C_\alpha|\theta|^{-1-|\alpha|}.
\end{equation}

The relation
\(\Phi_{-\theta}(X^*)=\Phi_\theta(X)^*\) gives
\(F_{1,\rho_0}(-\theta)=\overline{F_{1,\rho_0}(\theta)}\), which is the
required symmetry.  In dimension one, ordinary Taylor division gives
\[
  R_{\rho_0}(\theta)=\frac{c_{-1}(\rho_0)}{\theta}+O(1).
\]
The symmetry forces
\(-c_{-1}(\rho_0)=\overline{c_{-1}(\rho_0)}\), so
\(c_{-1}(\rho_0)=ic_{\rho_0}\) for some \(c_{\rho_0}\in\R\).
All constants in this step are uniform in \(\rho_0\): the spectral
projections and their derivatives are uniformly bounded, and the set of
density matrices is compact.

\smallskip
\noindent\emph{Step 3: the Green function for \(r\geq3\).}
The centered bound \eqref{eq:centered-upper-return}, applied to
\(K=\{x\}\), implies
\[
  G_{\rho_0}(x)=\sum_{n=0}^\infty p_n(x)<\infty
  \qquad(r\geq3).
\]
Since the summands are nonnegative,
\[
  G_{t,\rho_0}(x)\uparrow G_{\rho_0}(x)
  \quad\text{as }t\uparrow1.
\]
We next justify passage to the limit in the Fourier integral.  From
\eqref{eq:lambda-gaussian-majorant}, for \(t\) sufficiently close to \(1\),
\begin{align*}
  |1-t\lambda_0(\theta)|
  &\geq1-t|\lambda_0(\theta)|\\
  &\geq(1-t)+c|\theta|^2.
\end{align*}
The complementary resolvent is uniformly bounded, and hence
\begin{equation}\label{eq:abel-resolvent-bound}
  |F_{t,\rho_0}(\theta)|
  \leq C\left(1+\frac1{(1-t)+|\theta|^2}\right)
\end{equation}
near \(0\), uniformly in \(\rho_0\).  When \(r\geq3\), the function
\(|\theta|^{-2}\) is locally integrable.  Dominated convergence therefore
gives
\begin{equation}\label{eq:green-fourier-resolvent}
  G_{\rho_0}(x)
  =\frac{1}{(2\pi)^r}\int_{\mathbb T^r}
   e^{-i\theta\cdot x}F_{1,\rho_0}(\theta)\,d\theta.
\end{equation}

Choose the cutoff \(\chi\) of
Lemma~\ref{lem:fourier-singularity}, viewed in a coordinate neighborhood of
\(0\) in the torus.  Equation~\eqref{eq:resolvent-singularity} gives
\[
 F_{1,\rho_0}(\theta)
 =\chi(\theta)\frac{2}{\theta^T\Sigma\theta}
  +\chi(\theta)R_{\rho_0}(\theta)
  +S_{\rho_0}(\theta),
\]
where \(S_{\rho_0}\) is smooth on \(\mathbb T^r\).  Its Fourier
coefficients are \(O_N(|x|^{-N})\), uniformly in \(\rho_0\).
Lemma~\ref{lem:fourier-singularity}, together with
\eqref{eq:resolvent-remainder-symbol}, now yields
\eqref{eq:green-asymptotic}.

\smallskip
\noindent\emph{Step 4: convergence of the ordinary potential series.}
Let
\[
  d_n(x):=p_n(0)-p_n(x).
\]
Fourier inversion gives
\begin{equation}\label{eq:potential-coefficient-fourier}
 d_n(x)
 =\frac{1}{(2\pi)^r}\int_{\mathbb T^r}
  (1-e^{-i\theta\cdot x})
  \Tr\bigl(\Phi_\theta^n(\rho_0)\bigr)\,d\theta.
\end{equation}
We split the integral into the peripheral neighborhoods \(V_\xi\) and their
complement.  The complement and the spectral remainders contribute
\(O(\kappa^n)\), by
\eqref{eq:local-remainder-bound}--\eqref{eq:away-origin-bound}.

For \(\xi\in\Theta\), denote by \(d_{n,\xi}(x)\) the contribution of the
rank-one spectral term on \(V_\xi\).  With
\begin{align*}
 \psi_\xi(\eta)
 &:=\log\!\left(\zeta_\xi^{-1}
        \lambda_\xi(\xi+\eta)\right),\\
 h_{\xi,x}(\eta)
 &:=(1-e^{-i(\xi+\eta)\cdot x})
   a_\xi(\eta;\rho_0),
\end{align*}
we have
\begin{equation}\label{eq:peripheral-potential-contribution}
 d_{n,\xi}(x)
 =\frac{\zeta_\xi^n}{(2\pi)^r}
   \int_{|\eta|<\delta}
   h_{\xi,x}(\eta)e^{n\psi_\xi(\eta)}\,d\eta.
\end{equation}

We first treat \(\xi=0\).  Since \(a_0(0;\rho_0)=1\), Taylor's theorem
gives, uniformly over density matrices,
\begin{equation}\label{eq:zero-mode-amplitude-expansion}
 h_{0,x}(\eta)=i\eta\cdot x+O_x(|\eta|^2).
\end{equation}
Put \(Q(u):=\frac12u^T\Sigma u\), set \(u=\sqrt n\,\eta\), and fix
\(\varepsilon=1/12\).  On the inner region
\(|u|\leq n^\varepsilon\), analyticity and
\eqref{eq:lambda-expansion} imply
\begin{align}
 h_{0,x}(u/\sqrt n)
 &=n^{-1/2}i\,u\cdot x
   +O_x(n^{-1}|u|^2),
 \label{eq:scaled-zero-amplitude}\\
 n\psi_0(u/\sqrt n)
 &=-Q(u)+O(n^{-1/2}|u|^3).
 \label{eq:scaled-zero-phase}
\end{align}
Because \(n^{-1/2}|u|^3\leq n^{-1/4}\) on this region, the elementary
estimate \(|e^z-1|\leq |z|e^{|z|}\) yields, after reducing \(\delta\) if
necessary,
\begin{equation}\label{eq:scaled-zero-exponential-error}
 \left|e^{n\psi_0(u/\sqrt n)}-e^{-Q(u)}\right|
 \leq Cn^{-1/2}|u|^3e^{-c_1|u|^2}
\end{equation}
for some \(c_1>0\).  The integral of
\(n^{-1/2}i\,u\cdot x\,e^{-Q(u)}\) over the symmetric ball
\(|u|\leq n^\varepsilon\) vanishes.  Equations
\eqref{eq:scaled-zero-amplitude}--
\eqref{eq:scaled-zero-exponential-error} therefore show that the remaining
inner integral is bounded by
\[
 C_xn^{-1}\int_{\R^r}(|u|^2+|u|^4)e^{-c_1|u|^2}\,du
 =O_x(n^{-1}).
\]
On the outer region
\(n^\varepsilon<|u|<\delta\sqrt n\), the Gaussian majorant
\eqref{eq:lambda-gaussian-majorant} and boundedness of the amplitude give
\[
 \int_{n^\varepsilon<|u|<\delta\sqrt n}
 |h_{0,x}(u/\sqrt n)|
 |e^{n\psi_0(u/\sqrt n)}|\,du
 \leq Ce^{-c_2n^{2\varepsilon}}.
\]
Restoring the Jacobian \(n^{-r/2}\) in
\eqref{eq:peripheral-potential-contribution} proves
\begin{equation}\label{eq:zero-mode-potential-bound}
  d_{n,0}(x)=O_x\!\left(n^{-(r+2)/2}\right).
\end{equation}

We next treat \(\xi\neq0\).  Decrease \(\delta\), if necessary, so that
\[
  |1-\lambda_\xi(\xi+\eta)|\geq c_\xi>0
  \qquad(|\eta|<\delta);
\]
this is possible because \(\lambda_\xi(\xi)=\zeta_\xi\neq1\).
The finite geometric-sum identity gives
\begin{align*}
 \sum_{n=1}^N d_{n,\xi}(x)
 &=\frac1{(2\pi)^r}\int_{|\eta|<\delta}
 h_{\xi,x}(\eta)
 \frac{\lambda_\xi(\xi+\eta)
       (1-\lambda_\xi(\xi+\eta)^N)}
      {1-\lambda_\xi(\xi+\eta)}\,d\eta.
\end{align*}
The term containing \(\lambda_\xi^N\) is bounded in modulus by
\[
 C_x\int_{|\eta|<\delta}e^{-cN|\eta|^2}\,d\eta
 =O_x(N^{-r/2}),
\]
uniformly in \(\rho_0\).  Thus each nonzero peripheral contribution has
convergent ordinary partial sums.  Together with the absolutely summable
zero-mode bound \eqref{eq:zero-mode-potential-bound} and the exponentially
summable complement, this proves convergence of
\(\sum_{n=0}^\infty d_n(x)\) for \(r=1,2\), uniformly over \(\rho_0\)
for each fixed \(x\).

\smallskip
\noindent\emph{Step 5: identification and asymptotics of the potential.}
For \(0<t<1\), absolute convergence and
\eqref{eq:abel-resolvent} give
\begin{equation}\label{eq:abel-potential-resolvent}
 \sum_{n=0}^\infty t^nd_n(x)
 =\frac{1}{(2\pi)^r}\int_{\mathbb T^r}
  (1-e^{-i\theta\cdot x})
  F_{t,\rho_0}(\theta)\,d\theta.
\end{equation}
Since the ordinary series converges, Abel's theorem implies that its
left-hand side tends to \(\mathfrak a_{\rho_0}(x)\) as \(t\uparrow1\).

We describe the limit on the right.  Away from \(0\), the resolvents converge
smoothly and uniformly.  In dimension two,
\eqref{eq:abel-resolvent-bound} and
\(|1-e^{-i\theta\cdot x}|\leq C_x|\theta|\) give the locally integrable
majorant \(C_x|\theta|^{-1}\), so dominated convergence applies.  In
dimension one, pair the integrands at \(\theta\) and \(-\theta\).  The
symmetry
\[
  F_{t,\rho_0}(-\theta)
  =\overline{F_{t,\rho_0}(\theta)}
\]
shows that the paired integrand is
\[
  2\operatorname{Re}\!\left[
   (1-e^{-i\theta x})F_{t,\rho_0}(\theta)\right].
\]
The Taylor expansions
\[
 \lambda_0(\theta)
 =1-\frac{\Sigma}{2}\theta^2+O(\theta^3),
 \qquad
 a_0(\theta;\rho_0)=1+O(\theta),
\]
together with the symmetry, give the more precise one-dimensional
expansions
\[
 a_0(\theta;\rho_0)=1+i\alpha_{\rho_0}\theta+O(\theta^2),
 \qquad
 1-t\lambda_0(\theta)
 =\delta_t+\frac{t\Sigma}{2}\theta^2
  +it\beta\theta^3+O(\theta^4),
\]
where \(\delta_t=1-t\), \(\alpha_{\rho_0},\beta\in\R\), and all remainders
are uniform for \(t\) close to \(1\).  If
\[
  D_t(\theta):=\delta_t+\frac{t\Sigma}{2}\theta^2,
\]
Taylor division gives
\[
 F_{t,\rho_0}(\theta)
 =\frac1{D_t(\theta)}
  +\frac{i\alpha_{\rho_0}\theta}{D_t(\theta)}
  +\frac{i\beta_t\theta^3}{D_t(\theta)^2}
  +E_{t,\rho_0}(\theta),
\]
where \(\beta_t=-t\beta\) and \(E_{t,\rho_0}\) is uniformly bounded;
indeed, terms such as
\(\theta^2/D_t\), \(\theta^4/D_t^2\), and
\(\theta^6/D_t^3\) are bounded by constants.  In the paired expression,
the first term contributes
\[
  \frac{2(1-\cos(\theta x))}{D_t(\theta)}
  =O_x\!\left(\frac{\theta^2}{\delta_t+\theta^2}\right),
\]
the second contributes
\[
  O_x\!\left(\frac{\theta|\sin(\theta x)|}
  {\delta_t+\theta^2}\right)=O_x(1),
\]
and the third contributes
\[
  O_x\!\left(\frac{\theta^3|\sin(\theta x)|}
  {(\delta_t+\theta^2)^2}\right)=O_x(1).
\]
The bounded remainder causes no difficulty.  Thus the paired integrand is
bounded by a constant depending on \(x\), uniformly in \(t\) and
\(\theta\).  Dominated convergence on the positive half-interval gives the
principal-value limit
\begin{equation}\label{eq:ordinary-potential-resolvent}
 \mathfrak a_{\rho_0}(x)
 =\frac{1}{(2\pi)^r}\operatorname{pv}
  \int_{\mathbb T^r}(1-e^{-i\theta\cdot x})
  F_{1,\rho_0}(\theta)\,d\theta,
\end{equation}
where the principal value is needed only for \(r=1\).

Finally, insert the cutoff decomposition from Step~3 into
\eqref{eq:ordinary-potential-resolvent}.  The smooth term contributes
\(O(1)\), uniformly in \(x\) and \(\rho_0\).  The localized remainder
\(\chi R_{\rho_0}\) also contributes \(O(1)\) by the last part of
Lemma~\ref{lem:fourier-singularity}.  The principal quadratic term is
exactly the integral in \eqref{eq:linear-kernel} for \(r=1\) and in
\eqref{eq:log-kernel} for \(r=2\).  This proves
\eqref{eq:potential-r1}--\eqref{eq:potential-r2}.

All error constants are uniform over \(\rho_0\): the local symbol bounds,
the smooth-resolvent bounds, and the ordinary-series tail bounds depend
on \(\rho_0\) only through bounded linear functionals of a density matrix.
\end{proof}

\begin{example}
\label{ex:periodic-simple}
Let \(\cH=\C\), \(S=\{-1,1\}\), and
\(L_{-1}=L_1=2^{-1/2}\).  Then
\[
  L_{-1}^*L_{-1}+L_1^*L_1=1,
\]
so these operators define an OQRW.  Since the internal space is
one-dimensional, its only density matrix is \(\rho_0=1\), the local channel
is the identity on \(\C\), and the measured position process is precisely
the classical simple symmetric random walk.  In particular,
\begin{equation}\label{eq:periodic-exact-law}
 p_n(x)=
 \begin{cases}
  2^{-n}\displaystyle\binom{n}{(n+x)/2},
    & |x|\leq n\text{ and }n+x\text{ is even},\\[1ex]
  0,&\text{otherwise}.
 \end{cases}
\end{equation}

The Fourier deformation acts by scalar multiplication:
\[
  \Phi_\theta(z)
  =\frac12(e^{-i\theta}+e^{i\theta})z
  =\cos\theta\,z.
\]
Consequently,
\[
  \Theta=\{0,\pi\},\qquad
  \zeta_0=1,\quad \zeta_\pi=-1.
\]
Here \(\pi\) denotes the nonzero point of the torus
\(\R/(2\pi\Z)\).  At the two peripheral points,
\begin{align*}
 \log\lambda_0(\eta)
 &=\log(\cos\eta)
   =-\frac{\eta^2}{2}+O(\eta^4),\\
 \log\!\left(\zeta_\pi^{-1}\lambda_\pi(\pi+\eta)\right)
 &=\log(\cos\eta)
   =-\frac{\eta^2}{2}+O(\eta^4).
\end{align*}
Thus both branches have the common parameters
\[
  m=0,\qquad \Sigma=1.
\]
The spectral projections are the identity on \(\C\), so
\(a_0=a_\pi=1\).  Since \(e^{-i\pi x}=(-1)^x\), the periodic mask is
\[
  D_{n,x}
  =1+(-1)^ne^{-i\pi x}
  =1+(-1)^{n+x}
  =\begin{cases}
     2,&n+x\text{ is even},\\
     0,&n+x\text{ is odd}.
   \end{cases}
\]
Theorem~\ref{thm:uniform-llt} therefore yields the uniform periodic local
limit
\[
 p_n(x)
 =\frac{D_{n,x}}{\sqrt{2\pi n}}
   \exp\!\left(-\frac{x^2}{2n}\right)
  +o(n^{-1/2}),
\]
where the error is uniform in \(x\in\Z\).  Thus the Gaussian term is
multiplied by \(2\) on the admissible parity class and vanishes on the other
class, in agreement with the exact law \eqref{eq:periodic-exact-law}.

At the origin, only even times contribute and
\[
  p_{2k}(0)=4^{-k}\binom{2k}{k}
  \sim\frac1{\sqrt{\pi k}},
  \qquad p_{2k+1}(0)=0.
\]
Stirling's formula and summation of \(k^{-1/2}\) give
\[
  \sum_{n=0}^Np_n(0)
  \sim\sqrt{\frac{2N}{\pi}},
\]
so the origin is recurrent.

The potential kernel can also be evaluated exactly.  Theorem
\ref{thm:green-potential-asymptotics} guarantees convergence of its ordinary
series.  Since \(\Phi_\theta\) is multiplication by \(\cos\theta\), its
scalar resolvent is \(F_{1}(\theta)=(1-\cos\theta)^{-1}\).  Substitution
in \eqref{eq:ordinary-potential-resolvent} gives
\begin{align*}
 \mathfrak a(x)
 &=\frac1{2\pi}\operatorname{pv}\!\int_{-\pi}^{\pi}
   \frac{1-e^{-i\theta x}}{1-\cos\theta}\,d\theta\\
 &=\frac1{2\pi}\int_{-\pi}^{\pi}
   \frac{1-\cos(x\theta)}{1-\cos\theta}\,d\theta.
\end{align*}
The imaginary part vanishes by oddness.  For \(x\neq0\),
\[
 \frac{1-\cos(x\theta)}{1-\cos\theta}
 =\left(\frac{\sin(x\theta/2)}{\sin(\theta/2)}\right)^2,
\]
For \(k=|x|\), the squared ratio equals
\(\left|\sum_{j=0}^{k-1}e^{ij\theta}\right|^2\).  Expanding the square
and using orthogonality of the exponentials gives its integral over one
period as \(2\pi k\).  Hence
\[
  \mathfrak a(x)=|x|\qquad(x\in\Z).
\]
This example satisfies the periodic hypotheses with two peripheral modes,
but it is excluded by strong aperiodicity because
\(r(\Phi_\pi)=1\).
\end{example}

The admissible space--time parity classes in this example are displayed in
Figure~\ref{fig:oqrw-periodic-mask}.

\FloatBarrier
\begin{figure}[t]
\centering
\resizebox{\textwidth}{!}{%
\begin{tikzpicture}[x=0.64cm,y=0.78cm]
  \draw[->,thick] (-6.8,0) -- (7.0,0) node[right] {$x$};
  \draw[->,thick] (0,-0.35) -- (0,7.0) node[above] {$n$};

  \foreach \x in {-6,-5,...,6}{
    \draw[oqrwgray!25] (\x,0) -- (\x,6.25);
    \node[below,font=\scriptsize] at (\x,0) {$\x$};
  }
  \foreach \n in {0,1,...,6}{
    \draw[oqrwgray!25] (-6.25,\n) -- (6.25,\n);
  }
  \foreach \n in {1,...,6}{
    \node[left,font=\scriptsize] at (0,\n) {$\n$};
  }

  \foreach \n in {0,1,...,6}{
    \foreach \x in {-6,-5,...,6}{
      \pgfmathtruncatemacro{\parity}{mod(\n+\x+20,2)}
      \ifnum\parity=0
        \fill[oqrwblue] (\x,\n) circle (2.2pt);
      \else
        \draw[oqrwgray!60,fill=white] (\x,\n) circle (1.8pt);
      \fi
    }
  }

  \node[oqrw blue,text width=5.1cm,anchor=west] at (7.6,4.9) {
    \textbf{Periodic mask}\\[2pt]
    $D_{n,x}=1+(-1)^{n+x}$
  };
  \node[oqrw green,text width=5.1cm,anchor=west] at (7.6,3.0) {
    filled: $n+x$ even\\
    $D_{n,x}=2$
  };
  \node[oqrw gray,text width=5.1cm,anchor=west] at (7.6,1.2) {
    open: $n+x$ odd\\
    $D_{n,x}=0$
  };

  \draw[oqrw dashed arrow] (6.2,5.8) -- (7.55,5.1);
\end{tikzpicture}%
}
\caption{Parity mask for the one-dimensional simple symmetric walk.  The
periodic local theorem retains both peripheral Fourier points and produces
the factor two on the admissible parity class, rather than imposing strong
aperiodicity.}
\label{fig:oqrw-periodic-mask}
\end{figure}
%\FloatBarrier

\begin{example}
\label{ex:explicit-primitive}
Let \(r=1\), \(\cH=\C^2\), and write
\(E_{ab}=|e_a\rangle\langle e_b|\) for the standard matrix units.  On the
jump set \(S=\{-1,0,1,2\}\), define
\[
  L_0=2^{-1/2}E_{00},\qquad
  L_2=2^{-1/2}E_{01},\qquad
  L_{-1}=2^{-1/2}E_{10},\qquad
  L_1=2^{-1/2}E_{11}.
\]
The normalization can be checked componentwise:
\begin{align*}
 L_0^*L_0+L_{-1}^*L_{-1}&=E_{00},\\
 L_2^*L_2+L_1^*L_1&=E_{11}.
\end{align*}
Thus \(\sum_sL_s^*L_s=I\).

For \(X=(x_{ab})_{a,b=0}^1\), direct multiplication gives
\begin{align*}
 L_0XL_0^*&=\frac{x_{00}}2E_{00},&
 L_2XL_2^*&=\frac{x_{11}}2E_{00},\\
 L_{-1}XL_{-1}^*&=\frac{x_{00}}2E_{11},&
 L_1XL_1^*&=\frac{x_{11}}2E_{11}.
\end{align*}
Consequently,
\[
  \cL(X)=\Tr(X)\frac{I}{2}.
\]
In particular, \(\rho_\infty=I/2\), and every nonzero positive operator is
sent in one step to a positive-definite operator.  Hence \(\cL\) is
primitive, with primitivity exponent one.

The Fourier deformation annihilates the off-diagonal matrix units.  On a
diagonal matrix \(X=\diag(x_{00},x_{11})\), it acts as
\[
 \Phi_\theta(X)
 =\frac12\bigl(x_{00}+e^{2i\theta}x_{11}\bigr)E_{00}
  +\frac12\bigl(e^{-i\theta}x_{00}+e^{i\theta}x_{11}\bigr)E_{11}.
\]
Thus its restriction to the diagonal subspace is represented by
\[
  Q(\theta)=\frac12
  \begin{pmatrix}
    1&e^{2i\theta}\\
    e^{-i\theta}&e^{i\theta}
  \end{pmatrix},
\]
whose determinant is zero and whose trace is
\((1+e^{i\theta})/2\).  Therefore
\[
  \spec(\Phi_\theta)
  =\left\{\frac{1+e^{i\theta}}2,0\right\}.
\]
The nonzero eigenvalue can be written as
\begin{equation}\label{eq:explicit-eigenvalue-factorization}
 \lambda(\theta)
 =\frac{1+e^{i\theta}}2
 =e^{i\theta/2}\cos(\theta/2).
\end{equation}
It follows that
\[
  r(\Phi_\theta)=\left|\cos\frac\theta2\right|<1
  \quad\text{for }\theta\neq0\text{ in }[-\pi,\pi],
\]
so the peripheral set is \(\Theta=\{0\}\).  This verifies strong
aperiodicity.  Taking the analytic logarithm of
\eqref{eq:explicit-eigenvalue-factorization} near zero gives
\[
  \log\frac{1+e^{i\theta}}2
  =\frac{i\theta}{2}+\log\cos\frac\theta2
  =\frac{i\theta}{2}-\frac{\theta^2}{8}+O(\theta^4).
\]
Assumption~\ref{ass:fourier} holds with
\[
  m=\frac12,
  \qquad
  \Sigma=\frac14.
\]

We can also see these parameters directly from the trajectory.  Write the
initial density matrix as
\[
 \rho_0=\begin{pmatrix}1-q&z\\ \overline z&q\end{pmatrix},
 \qquad 0\leq q\leq1.
\]
The off-diagonal entry \(z\) does not affect any path probability.  Introduce
an initial label \(A_0\in\{0,1\}\) with
\(\PP(A_0=1)=q\).  After one Kraus operation the normalized internal state
is one of the basis states, and the possible transitions are
\begin{center}
\begin{tabular}{@{}cccc@{}}
\toprule
current label & next label & displacement & probability\\
\midrule
\(0\)&\(0\)&\(0\)&\(1/2\)\\
\(0\)&\(1\)&\(-1\)&\(1/2\)\\
\(1\)&\(0\)&\(2\)&\(1/2\)\\
\(1\)&\(1\)&\(1\)&\(1/2\)\\
\bottomrule
\end{tabular}
\end{center}
Thus \(A_1,A_2,\ldots\) are independent Bernoulli variables with parameter
\(1/2\), independent of \(A_0\), and the displacement at step \(k\) is
\[
  X_k-X_{k-1}=2A_{k-1}-A_k.
\]
Telescoping the sum of the displacements gives the exact representation
\begin{equation}\label{eq:explicit-position-representation}
  X_n=2A_0+\sum_{k=1}^{n-1}A_k-A_n
  \qquad(n\geq1).
\end{equation}
In particular,
\begin{align}
 \E_{\rho_0}X_n
 &=\frac n2+2q-1,
 \label{eq:explicit-first-moment}\\
 \operatorname{Var}_{\rho_0}(X_n)
 &=\frac n4+4q(1-q).
 \label{eq:explicit-variance}
\end{align}
The dependence on the initial state is confined to bounded terms.
Chebyshev's inequality and
\eqref{eq:explicit-first-moment}--\eqref{eq:explicit-variance} give
\[
 \frac{X_n}{n}\longrightarrow\frac12
 \quad\text{in probability}.
\]
For the central limit theorem, use
\eqref{eq:explicit-position-representation}: the centered sum
\(\sum_{k=1}^{n-1}(A_k-1/2)\) satisfies the classical i.i.d. central
limit theorem, and the remaining term \(2A_0-A_n-1/2\) is uniformly
bounded.  Slutsky's theorem therefore gives
\[
 \frac{X_n-n/2}{\sqrt n}\Longrightarrow
 \mathcal N\!\left(0,\frac14\right).
\]
The moment formulas also yield
\[
 \frac1{n^2}\E_{\rho_0}X_n^2
 =\frac1{n^2}\left[
   \left(\frac n2+2q-1\right)^2
   +\frac n4+4q(1-q)
  \right]
 \longrightarrow\frac14.
\]
Hence \(v_{\mathrm{bal}}=1/2\), uniformly in the initial density matrix.

For completeness, \eqref{eq:explicit-position-representation} also gives an
exact characteristic function.  For \(n\geq1\),
\begin{equation}\label{eq:explicit-characteristic-function}
 \E_{\rho_0}e^{i\theta X_n}
 =\bigl((1-q)+qe^{2i\theta}\bigr)
  \left(\frac{1+e^{i\theta}}2\right)^{n-1}
  \frac{1+e^{-i\theta}}2.
\end{equation}
This independently displays the dominant eigenvalue
\eqref{eq:explicit-eigenvalue-factorization}.  If
\(B_{n-1}\sim\operatorname{Bin}(n-1,1/2)\), then
\eqref{eq:explicit-position-representation} yields
\begin{equation}\label{eq:explicit-position-law}
 p_n(x)
 =2^{-n}\sum_{a,b\in\{0,1\}}
   q^a(1-q)^{1-a}
   \binom{n-1}{x-2a+b},
\end{equation}
where the Bernoulli weights are interpreted by continuity at \(q=0,1\),
and a binomial coefficient is understood to be zero when its lower index
does not belong to \(\{0,\ldots,n-1\}\).  For \(x\) in a fixed finite set,
all nonzero lower indices on the right remain bounded as \(n\to\infty\).
Thus \(p_n(x)\leq C_xn^{M_x}2^{-n}\), and consequently, for every finite
\(K\subset\Z\),
\[
  \PP_{\rho_0}(X_n\in K)\leq C_Ke^{-c_Kn}.
\]
This elementary estimate agrees with the exponential finite-set return bound
in Theorem~\ref{thm:uniform-llt}.
\end{example}

\subsection{A centered noncommuting family in every dimension}

\begin{example}
\label{ex:centered-noncommuting}
We give a family satisfying every part of
Assumption~\ref{ass:fourier}, including the exclusion of nonzero stationary
Fourier modes.  Begin on \(\Z\), with \(\cH=\C^2\), and put
\[
 P=\begin{pmatrix}1&0\\0&0\end{pmatrix},\qquad
 Q=\frac12\begin{pmatrix}1&1\\1&1\end{pmatrix},\qquad
 H=\left(I-\frac{P+Q}{2}\right)^{1/2}.
\]
The eigenvalues of \(H^2\) are
\((1\pm1/\sqrt2)/2\), so \(H\) is positive definite.  Define
\begin{equation}\label{eq:centered-noncommuting-kraus}
 L_{-1}=L_1=\frac12P,\qquad
 L_{-2}=L_2=\frac12Q,\qquad L_0=H.
\end{equation}
Since \(P^2=P\) and \(Q^2=Q\),
\[
 \sum_{s=-2}^2L_s^*L_s=H^2+\frac12P+\frac12Q=I.
\]
All Kraus operators are positive, and
\([L_1,L_2]=[P,Q]/4\neq0\).  Their jump probabilities depend on the
internal state:
\[
 \PP(\Delta X=\pm1\mid\rho)=\frac14\Tr(P\rho),\qquad
 \PP(\Delta X=\pm2\mid\rho)=\frac14\Tr(Q\rho).
\]
Each displayed probability is for one specified sign.  These increments
are not generally independent.  For example, starting with \(\rho_0=I/2\),
the probability of the first increment being \(1\) is \(1/8\), but its
occurrence leaves the internal state \(P\), making the conditional
probability of the next increment being \(1\) equal to \(1/4\).

\smallskip
\noindent\emph{Primitivity.}
Equip \(B(\C^2)\) with the Hilbert--Schmidt inner product
\(\langle X,Y\rangle_{\mathrm{HS}}=\Tr(X^*Y)\), and let
\(\norm{\cdot}_{\mathrm{HS}}\) be its norm.  For a positive matrix \(A\),
write \(\mathcal A_A(X)=AXA\).  This is a self-adjoint positive
operator on the Hilbert--Schmidt space, since
\[
 \langle X,\mathcal A_A(X)\rangle_{\mathrm{HS}}
 =\norm{A^{1/2}XA^{1/2}}_{\mathrm{HS}}^2\geq0.
\]
The local channel is
\(\cL=\mathcal A_H+\tfrac12\mathcal A_P+\tfrac12\mathcal A_Q\).
It is unital as well as trace preserving, and its faithful invariant state
is \(\rho_\infty=I/2\).  The normalization and self-adjointness of the
Kraus operators give the identity
\begin{equation}\label{eq:centered-dirichlet-identity}
 \langle X,(\mathrm{id}-\cL)X\rangle_{\mathrm{HS}}
 =\frac12\sum_{s=-2}^2
   \norm{L_sX-XL_s}_{\mathrm{HS}}^2.
\end{equation}
The only matrices commuting with both \(P\) and \(Q\) are scalar
matrices.  Thus the fixed space of \(\cL\) is \(\C I\).
Self-adjointness, positivity on Hilbert--Schmidt space, and
\eqref{eq:centered-dirichlet-identity} imply that the remaining eigenvalues
belong to \([0,1)\), and that the eigenvalue \(1\) is algebraically
simple.  Hence \(\cL^n(X)\to\Tr(X)I/2\) in operator norm on the
finite-dimensional matrix space.  Uniform convergence over density
matrices implies that some power sends every density matrix to a positive
definite matrix.  This proves primitivity and verifies \textup{(A1)}.

\smallskip
\noindent\emph{The full Fourier peripheral set.}
For real \(\theta\),
\begin{equation}\label{eq:centered-explicit-fourier}
 \Phi_\theta
 =\mathcal A_H+\frac{\cos\theta}{2}\mathcal A_P
                 +\frac{\cos(2\theta)}{2}\mathcal A_Q
\end{equation}
is self-adjoint on Hilbert--Schmidt space.  Its spectrum lies in
\([-1,1]\) by Lemma~\ref{lem:fourier-contraction}.  If
\(\Phi_\theta(X)=X\), then
\begin{align*}
 0={}&\langle X,(\mathrm{id}-\cL)X\rangle_{\mathrm{HS}}\\
 &+\frac{1-\cos\theta}{2}
        \norm{PXP}_{\mathrm{HS}}^2
  +\frac{1-\cos(2\theta)}{2}
        \norm{QXQ}_{\mathrm{HS}}^2.
\end{align*}
All three terms are nonnegative.  The first forces \(X=cI\);
if \(X\neq0\), the second then forces \(\cos\theta=1\).
Therefore \(1\) is not an eigenvalue at any nonzero point of the torus.
To exclude \(-1\), let \(h_*>0\) be the smallest eigenvalue of \(H\).
The decomposition
\[
 \mathrm{id}+\Phi_\theta
 =(\mathrm{id}-\cL)+2\mathcal A_H
   +\frac{1+\cos\theta}{2}\mathcal A_P
   +\frac{1+\cos(2\theta)}{2}\mathcal A_Q
\]
gives
\[
 \langle X,(\mathrm{id}+\Phi_\theta)X\rangle_{\mathrm{HS}}
 \geq2\norm{H^{1/2}XH^{1/2}}_{\mathrm{HS}}^2
 \geq2h_*^2\norm{X}_{\mathrm{HS}}^2.
\]
Thus \(-1\) is never an eigenvalue.  We have proved
\(\Theta=\{0\}\), with the simple peripheral eigenvalue
\(\zeta_0=1\), establishing \textup{(A2)}.

\smallskip
\noindent\emph{Drift and covariance.}
The analytic principal eigenvalue \(\lambda(\theta)\) is even because
\(\Phi_{-\theta}=\Phi_\theta\).  Moreover,
\(\Phi'_0=0\), so differentiating the normalized eigenvalue equation
and taking traces yields
\[
 \lambda'(0)=0,\qquad
 \lambda''(0)
 =\Tr\bigl(\Phi''_0(I/2)\bigr)
 =\Tr\left(-\frac14P-Q\right)=-\frac54.
\]
Consequently,
\begin{equation}\label{eq:centered-explicit-expansion}
 \log\lambda(\theta)=-\frac58\theta^2+O(\theta^4),
 \qquad m=0,\qquad \Sigma=\frac54.
\end{equation}
This verifies \textup{(A3)}.  Symmetry of each pair of Kraus operators
also makes the conditional mean of each displacement zero.

\smallskip
\noindent\emph{Tensor products and quantitative potentials.}
For any \(r\geq1\), use the internal space \((\C^2)^{\otimes r}\),
jump set \(\{-2,-1,0,1,2\}^r\), and operators
\[
 L_{\boldsymbol s}^{(r)}
 =L_{s_1}\otimes\cdots\otimes L_{s_r}.
\]
Their normalization follows by tensoring the one-dimensional normalization.
The active jumps generate \(\Z^r\), and
\[
 \cL^{(r)}=\cL^{\otimes r},\qquad
 \Phi_{\boldsymbol\theta}^{(r)}
 =\Phi_{\theta_1}\otimes\cdots\otimes\Phi_{\theta_r}.
\]
The tensor-product spectrum proves primitivity with invariant state
\(I/2^r\).  Since each factor has spectral radius at most one, with
equality only at zero, the full peripheral set is again \(\{0\}\).
Near zero the principal eigenvalue is \(\prod_{j=1}^r\lambda(\theta_j)\),
so \eqref{eq:centered-explicit-expansion} gives
\[
 m=0,\qquad \Sigma=\frac54 I_r.
\]
In particular, all hypotheses of Assumption~\ref{ass:fourier} hold in
every dimension.  The Kraus family remains noncommuting: the commutator of
\(L_1\otimes H^{\otimes(r-1)}\) and
\(L_2\otimes H^{\otimes(r-1)}\) is nonzero.

For every internal density matrix at the origin, including entangled
initial states when \(r>1\), Theorem~\ref{thm:uniform-llt} gives
\[
 \sup_{x\in\Z^r}\left|
 n^{r/2}p_n(x)
 -\left(\frac{2}{5\pi}\right)^{r/2}
     \exp\left(-\frac{2|x|^2}{5n}\right)\right|\longrightarrow0.
\]
Each coordinate has conditionally symmetric increments.  Thus \(X_n\)
is a vector-valued martingale and, since \(|X_{n+1}-X_n|^2\leq4r\),
\(\E|X_n|^2\leq4rn\).  Hence \(v_{\mathrm{bal}}=0\).
The cumulative returns satisfy
\[
 \sum_{n=1}^N p_n(0)\sim
 \begin{cases}
  2\sqrt{\dfrac{2}{5\pi}}\,\sqrt N,&r=1,\\[3pt]
  \dfrac{2}{5\pi}\log N,&r=2.
 \end{cases}
\]
Corollary~\ref{cor:tom-origin-recurrence} therefore gives TOM recurrence
of \(P_0\) for \(r=1,2\); for \(r\geq3\) every finite-set occupation
series is finite.  Finally,
Theorem~\ref{thm:green-potential-asymptotics} yields, as \(|x|\to\infty\),
\begin{align*}
 \mathfrak a_{\rho_0}(x)&=\frac45|x|+O(1),&&r=1,\\
 \mathfrak a_{\rho_0}(x)&=\frac{4}{5\pi}\log|x|+O(1),&&r=2,\\
 G_{\rho_0}(x)&=\frac{2\Gamma(\frac r2-1)}{5\pi^{r/2}}
                 |x|^{2-r}+O(|x|^{1-r}),&&r\geq3.
\end{align*}
All limits and error bounds have the initial-state uniformity asserted in
the cited theorems.  Thus the centered local and potential estimates apply
to a noncommuting family with state-dependent jumps, including both
recurrent and transient cases.
\end{example}

\section{Effective dimension and the drift--dimension dichotomy}
\label{sec:dichotomy}

A homogeneous walk may live in a proper subgroup of its ambient lattice.
Under the nondegeneracy hypothesis on that subgroup, its rank determines
the return exponent rather than the ambient dimension \(d\).

\begin{definition}
\label{def:effective-lattice}
For a homogeneous finite-range OQRW on \(\Z^d\), let
\[
  S_{\mathrm{act}}:=\{s\in\Z^d:L_s\neq0\}
\]
and define
\begin{equation}\label{eq:effective-lattice}
  \Gamma:=\langle S_{\mathrm{act}}\rangle_{\Z}\subseteq\Z^d.
\end{equation}
Its rank
\[
  r:=\rank\Gamma
\]
is the \emph{effective dimension}.  Choose a group isomorphism
\(J\colon\Gamma\to\Z^r\).  Its inverse extends uniquely to a linear injection
\begin{equation}\label{eq:iota-effective}
  \iota\colon\R^r\longrightarrow\R^d.
\end{equation}
For a walk started at the origin, put
\[
  Y_n:=J(X_n).
\]
\end{definition}

The rank of \(\Gamma\) describes spatial support, and is not by itself the
rank of the asymptotic covariance.  Internal correlations can produce
additional covariance degeneracy even when the jumps generate \(\Gamma\).
Accordingly, positive definiteness in Assumption~\ref{ass:fourier} must still
be checked after the coordinate change.

The effective Fourier deformation is
\begin{equation}\label{eq:effective-fourier}
  \widetilde\Phi_\theta(\sigma)
  :=\sum_{s\in S_{\mathrm{act}}}
    e^{i\theta\cdot J(s)}L_s\sigma L_s^*,
  \qquad \theta\in\mathbb T^r.
\end{equation}
When \(r\geq1\), we apply Assumption~\ref{ass:fourier} to this family and
denote its drift and covariance by \(\widetilde m\) and
\(\Sigma_{\mathrm{eff}}\).  The ambient drift is
\begin{equation}\label{eq:ambient-drift}
  m:=\iota(\widetilde m)\in\R^d.
\end{equation}

\begin{theorem}
\label{thm:drift-dimension}
Let a homogeneous finite-range OQRW on \(\Z^d\) start from
\eqref{eq:localized-initial-state}, and let \(r\) be its effective dimension.

If \(r=0\), then \(X_n=0\) for all \(n\), the RMS ballistic speed is zero,
and the walk is recurrent at the origin.

If \(r\geq1\), assume that the effective Fourier family
\eqref{eq:effective-fourier} satisfies Assumption~\ref{ass:fourier}.  Then
\begin{equation}\label{eq:effective-speed}
  v_{\mathrm{bal}}(\rho^{(0)})=|m|.
\end{equation}
Moreover:
\begin{enumerate}[label=\textup{(\roman*)},leftmargin=2.4em]
\item if \(m\neq0\), the walk is spatially transient;
\item if \(m=0\) and \(r\geq3\), the walk is spatially transient;
\item if \(m=0\) and \(r\in\{1,2\}\), the walk is recurrent at the origin,
and \(P_0\) is recurrent in the TOM sense.
\end{enumerate}
\end{theorem}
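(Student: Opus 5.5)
The plan is to reduce everything to Theorem~\ref{thm:uniform-llt} and its corollaries by transporting the walk to \(\Z^r\) via \(J\), and then to return to \(\Z^d\) through \(\iota\).

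\textbf{Case \(r=0\).} Here \(S_{\mathrm{act}}\subseteq\{0\}\), so only \(L_0\) can be nonzero and the normalization gives \(L_0^*L_0=I\). Every step is then a zero jump, so \(X_n=0\) almost surely for every \(n\). Consequently \(\E|X_n|^2=0\), so \(v_{\mathrm{bal}}=0\), and \(\sum_n p_n(0)=\infty\).

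\textbf{Case \(r\geq1\): reduction to \(\Z^r\).} Since the walk starts at \(0\) and every increment lies in \(S_{\mathrm{act}}\subseteq\Gamma\), we have \(X_n\in\Gamma\) for all \(n\). Hence \(Y_n=J(X_n)\) is well defined, and \(X_n=\iota(Y_n)\). Setting \(\widetilde L_{J(s)}:=L_s\) for \(s\in S_{\mathrm{act}}\) defines a homogeneous finite-range OQRW on \(\Z^r\) with the same normalization. Its block evolution is the image of \eqref{eq:homogeneous-evolution} under \(J\), and its Fourier family is exactly \eqref{eq:effective-fourier}. The path-expansion argument of Step~1 of Theorem~\ref{thm:uniform-llt} shows that the law of \((Y_n)\) equals the position law of this transported walk, with \(p_n^X(x)=p_n^Y(J(x))\) for \(x\in\Gamma\) and \(p_n^X(x)=0\) otherwise. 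Finite sets \(K\subset\Z^d\) pull back to finite sets \(J(K\cap\Gamma)\subset\Z^r\). Therefore
\[
  \PP(X_n\in K)=\PP\bigl(Y_n\in J(K\cap\Gamma)\bigr).
\]

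\textbf{Transience and recurrence.} Parts (i) and (ii) follow from Corollary~\ref{cor:one-component-return} applied to \(Y\), using that \(m=\iota(\widetilde m)\) vanishes iff \(\widetilde m\) does, because \(\iota\) is injective. For (iii), recurrence at the origin follows from the same corollary, since \(J(0)=0\). TOM recurrence of \(P_0\) follows by repeating the proof of Corollary~\ref{cor:tom-origin-recurrence}. For \(u=\eta\otimes|0\rangle\), duality gives
\[
  \ip{u}{T^n(P_0)u}=\norm{\eta}^2\,\PP_{\rho_\eta}(X_n=0)=\norm{\eta}^2\,\PP_{\rho_\eta}(Y_n=0),
\]
and the cumulative asymptotics \eqref{eq:cumulative-return-r1}--\eqref{eq:cumulative-return-r2} hold uniformly for every density matrix \(\rho_\eta\).

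\textbf{Speed.} The speed identity \eqref{eq:effective-speed} will follow from Proposition~\ref{prop:speed-from-lln} once we know that \(Y_n/n\to\widetilde m\) in probability. Then \(X_n/n=\iota(Y_n/n)\to\iota(\widetilde m)=m\) by continuity of the linear map \(\iota\), and the walk has finite range with \(X_0=0\).

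\textbf{Main obstacle.} The one genuinely new step is this law of large numbers, since it is not directly stated earlier. I would derive it from the Laplace deformation in Step~5 of Theorem~\ref{thm:uniform-llt}, applied to \(Y\) with an arbitrary unit direction \(e\in\R^r\) instead of \(e=\widetilde m/|\widetilde m|\). From \eqref{eq:laplace-expansion},
\[
  \Lambda(\pm\gamma e)=\pm\gamma\,\widetilde m\cdot e+O(\gamma^2),
\]
and the spectral argument there gives \(\E e^{\pm\gamma e\cdot Y_n}\leq Ce^{n(\Lambda(\pm\gamma e)+o(1))}\). Fix \(\epsilon>0\) and apply Markov's inequality to the events \(e\cdot Y_n\geq n(\widetilde m\cdot e+\epsilon)\) and \(e\cdot Y_n\leq n(\widetilde m\cdot e-\epsilon)\). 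Choosing \(\gamma\) small so that the \(O(\gamma^2)\) term is below \(\gamma\epsilon/2\), both probabilities decay exponentially in \(n\). Applying this to the finitely many coordinate directions \(\pm e_j\) gives \(Y_n/n\to\widetilde m\), even almost surely by Borel--Cantelli.

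The point needing care is that \(\mu(t)\) remains the dominant, separated eigenvalue for small real \(t\). This requires only (A1), which Step~5 already established, so no new spectral hypothesis enters. The argument works for \(\widetilde m=0\) as well, where the Laplace deformation was not previously used. As an alternative, one could invoke the LLN of \cite{AGPS15} or \cite{CarbonePautrat15} for primitive local channels, but the deformation route keeps the argument self-contained.
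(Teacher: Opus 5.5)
Your proposal is correct. Apart from the law of large numbers, it follows the paper's proof:
\begin{itemize}
\item the same transport to \(\Z^r\) via \(J\);
\item the identity \(\PP(X_n\in K)=\PP(Y_n\in J(K\cap\Gamma))\);
\item Corollary~\ref{cor:one-component-return} for \textup{(i)}--\textup{(iii)};
\item the duality argument of Corollary~\ref{cor:tom-origin-recurrence} for \(P_0\). You sensibly rerun this with the original \(T\) on \(\Z^d\).
\end{itemize}

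The genuine difference is how you obtain \(Y_n/n\to\widetilde m\).

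The paper uses characteristic functions. By the Step~2 spectral decomposition near \(\theta=0\),
\(\E e^{iu\cdot Y_n/n}=\Tr(\widetilde\Phi_{u/n}^n(\rho_0))=\widetilde\lambda_0(u/n)^n\widetilde a(u/n)+O(\kappa^n)\).
This tends to \(e^{iu\cdot\widetilde m}\). L\'evy's continuity theorem then gives convergence in distribution to a constant, hence in probability.

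You instead run the real Laplace deformation of Step~5 in every coordinate direction and apply Chernoff bounds. This is legitimate for two reasons. First, the derivation of \eqref{eq:laplace-expansion} never uses \(m\neq0\); that hypothesis only served to choose \(e=m/|m|\). Second, dominance and separation of \(\mu(t)\) for small real \(t\) rest only on \textup{(A1)}.

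What each route buys:
\begin{itemize}
\item Your route gives more than is needed: exponential concentration of \(Y_n/n\) around \(\widetilde m\), and hence almost-sure convergence along trajectories.
\item The paper's route is shorter. It needs only the first-order behavior of the Fourier branch already controlled in Theorem~\ref{thm:uniform-llt}, with no Perron--Frobenius argument for \(\Psi_t\).
\end{itemize}
Proposition~\ref{prop:speed-from-lln} only requires convergence in probability, so either route suffices for \eqref{eq:effective-speed}.
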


\begin{proof}
If \(r=0\), every active jump is zero, so the first assertion is immediate.
Suppose \(r\geq1\).  We first derive the law of large numbers directly from
the spectral expansion.  For \(u\in\R^r\),
\begin{equation}\label{eq:effective-lln-characteristic}
  \E e^{iu\cdot Y_n/n}
  =\Tr\bigl(\widetilde\Phi_{u/n}^n(\rho_0)\bigr).
\end{equation}
Let \(\widetilde\lambda_0(\theta)\) and
\(\widetilde\Pi_\theta\) be the eigenvalue branch and spectral projection
issuing from \(1\) at \(\theta=0\), and put
\[
  \widetilde a(\theta)
  :=\Tr\bigl(\widetilde\Pi_\theta(\rho_0)\bigr).
\]
As in \eqref{eq:amplitude}, \(\widetilde a\) is analytic and
\(\widetilde a(0)=1\).  The local spectral decomposition gives, for each
fixed \(u\) and all sufficiently large \(n\),
\[
  \Tr\bigl(\widetilde\Phi_{u/n}^n(\rho_0)\bigr)
  =\widetilde\lambda_0(u/n)^n\widetilde a(u/n)+O(\kappa^n)
\]
for some \(\kappa<1\).
Together with \eqref{eq:lambda-expansion}, this gives
\[
  \widetilde\lambda_0(u/n)^n
  \longrightarrow e^{iu\cdot\widetilde m},
\]
while \(\widetilde a(u/n)\to1\) and the complementary term tends to zero.
L\'evy's continuity theorem therefore implies
\begin{equation}\label{eq:effective-lln}
  \frac{Y_n}{n}\longrightarrow\widetilde m
  \quad\text{in distribution, hence in probability}.
\end{equation}
Since \(X_n=\iota(Y_n)\),
\[
  \frac{X_n}{n}\longrightarrow m
  \quad\text{in probability}.
\]
Proposition~\ref{prop:speed-from-lln} proves \eqref{eq:effective-speed}.

For a finite \(K\subset\Z^d\), set
\[
  K_{\mathrm{eff}}:=J(K\cap\Gamma)\subset\Z^r.
\]
Then
\[
  \PP(X_n\in K)=\PP(Y_n\in K_{\mathrm{eff}}).
\]
The three recurrence and transience statements now follow from
Theorem~\ref{thm:uniform-llt} and
Corollary~\ref{cor:one-component-return} applied on the effective lattice.
The TOM assertion in \textup{(iii)} follows from
Corollary~\ref{cor:tom-origin-recurrence}.
\end{proof}

The resulting drift--effective-dimension dichotomy is summarized in
Figure~\ref{fig:oqrw-drift-dimension}.

\FloatBarrier
\begin{figure}[t]
\centering
\resizebox{\textwidth}{!}{%
\begin{tikzpicture}
  \node[oqrw blue,text width=6.0cm] (root) at (0,0) {
    \textbf{Primitive finite-range OQRW}\\[-1pt]
    Fourier assumptions, drift $m$, covariance $\Sigma>0$,\\
    effective lattice rank $r$
  };

  \node[oqrw red,text width=4.8cm] (drift) at (-4.2,-2.3) {
    \textbf{Nonzero drift: $m\neq0$}\\[2pt]
    $v_{\mathrm{bal}}=|m|>0$\\
    $\mathbb P(X_n\in K)\leq C_K e^{-c_Kn}$
  };

  \node[oqrw blue,text width=4.8cm] (centered) at (3.0,-2.3) {
    \textbf{Centered regime: $m=0$}\\[2pt]
    $v_{\mathrm{bal}}=0$\\
    return behavior is decided by $r$
  };

  \node[oqrw green,text width=4.8cm] (low) at (0.55,-5.55) {
    \textbf{Low effective dimension:}\\[-1pt]
    \textbf{$r=1,2$}\\[2pt]
    $\sum_n p_n(0)=\infty$\\
    $P_0$ is recurrent in the TOM sense\\
    $\mathfrak a(x)\sim |x|/\Sigma$ $(r=1)$\\
    $\mathfrak a(x)\sim \dfrac{\log R_\Sigma(x)}
      {\pi\sqrt{\det\Sigma}}$ $(r=2)$
  };

  \node[oqrw gold,text width=4.8cm] (high) at (5.85,-5.55) {
    \textbf{Transient dimension: $r\geq3$}\\[2pt]
    $\sum_n p_n(0)<\infty$\\
    $G(x)\sim \dfrac{\Gamma(r/2-1)}
      {2\pi^{r/2}\sqrt{\det\Sigma}}
      R_\Sigma(x)^{2-r}$
  };

  \node[oqrw red,text width=4.8cm] (allr) at (-4.2,-5.05) {
    \textbf{Transient in every}\\[-1pt]
    \textbf{effective dimension}\\
    drift dominates diffusive\\spreading
  };

  \draw[oqrw arrow] (root) -- node[pos=0.58,left,fill=white,inner sep=1pt,font=\small] {$m\neq0$} (drift);
  \draw[oqrw arrow] (root) -- node[pos=0.58,right,fill=white,inner sep=1pt,font=\small] {$m=0$} (centered);
  \draw[oqrw arrow] (drift) -- (allr);
  \draw[oqrw arrow] (centered) -- node[pos=0.58,left,fill=white,inner sep=1pt,font=\small] {$r\leq2$} (low);
  \draw[oqrw arrow] (centered) -- node[pos=0.58,right,fill=white,inner sep=1pt,font=\small] {$r\geq3$} (high);

  \begin{scope}[on background layer]
    \node[draw=oqrwgray!45,dashed,rounded corners=4pt,
      fit=(root)(drift)(centered)(low)(high)(allr),inner sep=7pt] {};
  \end{scope}
\end{tikzpicture}%
}
\caption{Drift--effective-dimension classification in the primitive
one-component regime.  RMS speed detects drift, whereas recurrence also
depends on the rank of the effective lattice.}
\label{fig:oqrw-drift-dimension}
\end{figure}
\FloatBarrier

\begin{remark}
\label{rem:ambient-effective}
A walk on \(\Z^3\) with active jumps \(0,\pm e_1\) has effective dimension
one.  In the centered periodic regime of Assumption~\ref{ass:fourier} it is
recurrent, despite the
three-dimensional ambient space.  Replacing \(r\) by \(d\) in
Theorem~\ref{thm:drift-dimension} would therefore be false.
\end{remark}

\begin{remark}
\label{rem:speed-detects}
Within the one-component regime of Theorem~\ref{thm:drift-dimension},
\[
  v_{\mathrm{bal}}>0\quad\Longleftrightarrow\quad m\neq0,
\]
and this implies transience.  Zero speed is inconclusive: centered walks are
recurrent in effective dimensions one and two and transient in dimensions at
least three.
\end{remark}

\section{Reducible walks and componentwise potentials}
\label{sec:components}

The absorption construction below gives exact identities without any
component local-limit assumption.  The later drift--dimension classification
is conditional on Assumption~\ref{ass:component-return}, which specifies
the additional return estimates.  The component velocity limits used for
the RMS formula do not by themselves supply those estimates.

We next use the recurrent-enclosure decomposition of
Carbone--Girotti--Hernandez \cite{CGH22}.  Let \(\cL\) be the local channel of
a homogeneous OQRW with finite-dimensional internal space.  Choose the finite
recurrent decomposition
\begin{equation}\label{eq:cgh-recurrent-decomposition}
  \cR=\bigoplus_{\alpha\in\mathcal I}\chi_\alpha
\end{equation}
used in their generalized central limit theorem, and write \(p_\alpha\) for
the projection onto \(\chi_\alpha\).  The absorption operator is
\begin{equation}\label{eq:absorption-operator}
  A_\alpha:=A(\chi_\alpha)
  =\lim_{n\to\infty}\cL^{*n}(p_\alpha).
\end{equation}
For the localized initial state \eqref{eq:localized-initial-state}, define
\begin{equation}\label{eq:absorption-weights}
  a_\alpha:=\Tr(A_\alpha\rho_0),
  \qquad
  \mathcal I(\rho_0):=\{\alpha\in\mathcal I:a_\alpha>0\}.
\end{equation}
The recurrent subspace \(\cR\) is the span of the supports of all invariant
states.  Since each \(\chi_\alpha\) is an enclosure, its projection is
subharmonic: \(\cL^*(p_\alpha)\geq p_\alpha\).  Thus the limit in
\eqref{eq:absorption-operator} is monotone and exists in norm, and
\(0\leq A_\alpha\leq I\), \(\cL^*(A_\alpha)=A_\alpha\).
Moreover,
\[
 A_{\cR}:=\sum_\alpha A_\alpha
 =\lim_{n\to\infty}\cL^{*n}(p_{\cR})=I.
\]
To justify the last equality, put \(B=I-A_{\cR}\).  Then \(B\geq0\) is
harmonic and vanishes on \(\cR\).  For any density matrix \(\sigma\), a
subsequence of the Ces\`aro means
\(N^{-1}\sum_{n=0}^{N-1}\cL^n(\sigma)\) converges to an invariant state
\(\tau\), by finite-dimensional compactness.  Harmonicity gives
\(\Tr(B\sigma)=\Tr(B\tau)=0\), because \(\supp\tau\subseteq\cR\).
Hence \(B=0\).  In particular, \(\sum_\alpha a_\alpha=1\).

For each accessible \(\alpha\), harmonicity makes
\(Z_n^\alpha:=\Tr(A_\alpha\rho_n)/a_\alpha\) a martingale on the
trajectory filtration \((\mathcal F_n)\).  Indeed, conditional expectation
of its next value is obtained by applying \(\cL^*\) to \(A_\alpha\).
Since \(0\leq Z_n^\alpha\leq a_\alpha^{-1}\) and
\(Z_0^\alpha=1\), it converges in \(L^1\) to a density
\(Z_\infty^\alpha\).  Set
\(d\PP^\alpha_{\rho^{(0)}}=Z_\infty^\alpha\,d\PP_{\rho^{(0)}}\),
as in \cite[Lemma~3.1]{CGH22}.  The martingale property gives
\begin{equation}\label{eq:cgh-density}
  \left.
  \frac{d\PP^\alpha_{\rho^{(0)}}}{d\PP_{\rho^{(0)}}}
  \right|_{\mathcal F_n}
  =\frac{\Tr(A_\alpha\rho_n)}{a_\alpha}.
\end{equation}
Here \(\rho_n\) is the normalized internal state along the quantum
trajectory.  If \(a_\alpha=0\), the nonnegative martingale
\(\Tr(A_\alpha\rho_n)\) has expectation zero and therefore vanishes
almost surely at every time.  Consequently
\(\sum_{\alpha\in\mathcal I(\rho_0)}a_\alpha Z_n^\alpha=1\)
almost surely.  Passing to the \(L^1\) limits proves the exact convex
decomposition
\begin{equation}\label{eq:path-measure-decomposition}
  \PP_{\rho^{(0)}}
  =\sum_{\alpha\in\mathcal I(\rho_0)}
    a_\alpha\PP^\alpha_{\rho^{(0)}}.
\end{equation}
The term ``component'' below refers to one of these probability measures; it
need not mean that the original initial state was already supported in the
corresponding enclosure.

We next give a concrete OQRW realization of each component measure.  This
also makes precise what is meant by the Fourier family associated with a
component.  Let \(q_\alpha\) be the support projection of \(A_\alpha\), put
\(\cK_\alpha:=q_\alpha\cH\), and let \(A_\alpha^{-1/2}\) denote the inverse
of \(A_\alpha^{1/2}\) on \(\cK_\alpha\), extended by zero on
\(\ker A_\alpha\).  Define
\begin{equation}\label{eq:component-h-transform}
  \widehat L_s^{(\alpha)}
  :=\left.A_\alpha^{1/2}L_sA_\alpha^{-1/2}
    \right|_{\cK_\alpha},
  \qquad
  \widehat\rho_{0,\alpha}
  :=\frac{A_\alpha^{1/2}\rho_0A_\alpha^{1/2}}{a_\alpha}.
\end{equation}
These objects are well defined.  Indeed, harmonicity
\(\cL^*(A_\alpha)=A_\alpha\) implies that \(\ker A_\alpha\) is invariant
under every \(L_s\): if \(v\in\ker A_\alpha\), then
\[
  0=\ip{v}{A_\alpha v}
   =\sum_{s\in S}\ip{L_sv}{A_\alpha L_sv},
\]
and every term in the last sum is nonnegative.  Consequently
\(q_\alpha L_s=q_\alpha L_sq_\alpha\).  Moreover,
\[
  \sum_{s\in S}\widehat L_s^{(\alpha)*}
                    \widehat L_s^{(\alpha)}
  =A_\alpha^{-1/2}\cL^*(A_\alpha)A_\alpha^{-1/2}
  =I_{\cK_\alpha},
  \qquad
  \Tr(\widehat\rho_{0,\alpha})=1.
\]
Thus \((\widehat L_s^{(\alpha)})_{s\in S}\) defines a homogeneous OQRW on
\(\cK_\alpha\).

For a word \({\bf s}=(s_1,\ldots,s_n)\), write
\(L_{\bf s}=L_{s_n}\cdots L_{s_1}\) and define
\(\widehat L_{\bf s}^{(\alpha)}\) analogously.  The relation
\(q_\alpha L_s=q_\alpha L_sq_\alpha\) gives
\[
  \widehat L_{\bf s}^{(\alpha)}
  =A_\alpha^{1/2}L_{\bf s}A_\alpha^{-1/2}
  \quad\text{on }\cK_\alpha,
\]
and hence
\begin{align*}
 &\Tr\!\left(
   \widehat L_{\bf s}^{(\alpha)}\widehat\rho_{0,\alpha}
   \widehat L_{\bf s}^{(\alpha)*}\right)\\
 &\hspace{2cm}
  =\frac1{a_\alpha}
    \Tr\bigl(A_\alpha L_{\bf s}\rho_0L_{\bf s}^*\bigr).
\end{align*}
By \eqref{eq:cgh-density}, the expression on the right is precisely the
\(\PP^\alpha_{\rho^{(0)}}\)-probability of the word \({\bf s}\).  Therefore
the component measure on jump sequences, and hence on position paths, is
exactly realized by the \(h\)-transformed OQRW
\eqref{eq:component-h-transform}.  The internal states are related by
\[
  \widehat\rho_{n,\alpha}
  =\frac{A_\alpha^{1/2}\rho_nA_\alpha^{1/2}}
         {\Tr(A_\alpha\rho_n)}
\]
for every \(n\), almost surely under \(\PP^\alpha_{\rho^{(0)}}\).
The denominator is positive almost surely under this measure, and the
transformed internal states need not equal the original internal states.

Define
\[
  \widehat S_{\mathrm{act}}^{(\alpha)}
  :=\{s\in S:\widehat L_s^{(\alpha)}\neq0\},
  \qquad
  \Gamma_\alpha
  :=\langle\widehat S_{\mathrm{act}}^{(\alpha)}\rangle_{\Z},
  \qquad
  r_\alpha:=\rank\Gamma_\alpha,
\]
and choose a group isomorphism
\(J_\alpha\colon\Gamma_\alpha\to\Z^{r_\alpha}\).  Let
\(\iota_\alpha\colon\R^{r_\alpha}\to\R^d\) be the linear extension of
\(J_\alpha^{-1}\).  For \(r_\alpha\geq1\), the effective component Fourier
family is
\begin{equation}\label{eq:component-fourier-family}
  \widehat\Phi^{(\alpha)}_\theta(\sigma)
  :=\sum_{s\in\widehat S_{\mathrm{act}}^{(\alpha)}}
    e^{i\theta\cdot J_\alpha(s)}
    \widehat L_s^{(\alpha)}\sigma
    \widehat L_s^{(\alpha)*}.
\end{equation}
This is an ordinary Fourier deformation of the form
\eqref{eq:fourier-channel}, now on \(\cK_\alpha\).

For finite \(K\subset\Z^d\), set
\begin{equation}\label{eq:component-green}
  G_K^\alpha
  :=\sum_{n=0}^\infty
    \PP^\alpha_{\rho^{(0)}}(X_n\in K).
\end{equation}

\begin{proposition}
\label{prop:green-decomposition}
For every finite \(K\subset\Z^d\),
\begin{equation}\label{eq:green-decomposition}
  G_{\rho^{(0)}}(K)
  =\sum_{\alpha\in\mathcal I(\rho_0)}a_\alpha G_K^\alpha
  \quad\text{in }[0,\infty].
\end{equation}
Equivalently,
\begin{equation}\label{eq:tom-component-potential}
  \Tr\bigl(\rho^{(0)}U(P_K)\bigr)
  =\sum_{\alpha\in\mathcal I(\rho_0)}a_\alpha G_K^\alpha.
\end{equation}
\end{proposition}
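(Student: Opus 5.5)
The proof is a direct consequence of the exact path-measure decomposition \eqref{eq:path-measure-decomposition}, combined with monotone summation in \([0,\infty]\). We fix a finite \(K\subset\Z^d\) and a time \(n\geq0\). The event \(\{X_n\in K\}\) is determined by the first \(n\) jumps, so it lies in \(\mathcal F_n\). Applying \eqref{eq:path-measure-decomposition} to this event gives, for every \(n\),
\[
  \PP_{\rho^{(0)}}(X_n\in K)
  =\sum_{\alpha\in\mathcal I(\rho_0)}a_\alpha\,
    \PP^\alpha_{\rho^{(0)}}(X_n\in K).
\]
If one prefers to avoid the \(L^1\) limit, the same identity follows directly from the finite-time density \eqref{eq:cgh-density}. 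Indeed, \(\PP^\alpha_{\rho^{(0)}}(X_n\in K)=a_\alpha^{-1}\E_{\rho^{(0)}}[\Tr(A_\alpha\rho_n)\mathbf 1_{\{X_n\in K\}}]\). Summing over \(\alpha\) and using \(\sum_\alpha A_\alpha=I\) recovers the left-hand side. Terms with \(a_\alpha=0\) drop out, since \(\Tr(A_\alpha\rho_n)=0\) almost surely.

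Next we sum over \(n\geq0\). The index set \(\mathcal I(\rho_0)\) is finite, and every summand \(a_\alpha\PP^\alpha_{\rho^{(0)}}(X_n\in K)\) is nonnegative. By Tonelli, that is, rearrangement of nonnegative double series, we may interchange the sums in \([0,\infty]\). This yields \(G_{\rho^{(0)}}(K)=\sum_\alpha a_\alpha G_K^\alpha\), with both sides simultaneously finite or infinite. This is \eqref{eq:green-decomposition}.

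For \eqref{eq:tom-component-potential}, we use the definition \eqref{eq:green-potential} of \(G_{\rho^{(0)}}(K)\) as the monotone pairing \(\Tr(\rho^{(0)}U(P_K))\). That identity rests on the occupation duality \eqref{eq:occupation-duality} applied termwise. Substituting it into the left-hand side of \eqref{eq:green-decomposition} gives the second formula.

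The only delicate point is the measurability remark that \(\{X_n\in K\}\in\mathcal F_n\), together with consistency of the position marginal under \(\PP^\alpha\). Both are immediate from the trajectory construction, and no local-limit or return assumption enters.
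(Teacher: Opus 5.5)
Your proposal is correct and follows essentially the same route as the paper: apply the path-measure decomposition \eqref{eq:path-measure-decomposition} to the event \(\{X_n\in K\}\), sum over \(n\) using Tonelli, and read off \eqref{eq:tom-component-potential} from the definition \eqref{eq:green-potential}. Your alternative per-time derivation via the finite-time density \eqref{eq:cgh-density} and \(\sum_\alpha A_\alpha=I\) is also valid, and it avoids passing through the \(L^1\) limit \(Z_\infty^\alpha\).
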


\begin{proof}
Apply \eqref{eq:path-measure-decomposition} to the event
\(\{X_n\in K\}\):
\[
  \PP_{\rho^{(0)}}(X_n\in K)
  =\sum_{\alpha\in\mathcal I(\rho_0)}
    a_\alpha\PP^\alpha_{\rho^{(0)}}(X_n\in K).
\]
Summing over \(n\) and using Tonelli's theorem proves
\eqref{eq:green-decomposition}.  Equation
\eqref{eq:tom-component-potential} follows from
\eqref{eq:green-potential}.
\end{proof}

\begin{corollary}
\label{cor:componentwise-potential}
For a fixed finite \(K\subset\Z^d\),
\begin{align}
  G_{\rho^{(0)}}(K)<\infty
  &\quad\Longleftrightarrow\quad
  G_K^\alpha<\infty
  \text{ for every }\alpha\in\mathcal I(\rho_0),
  \label{eq:component-finite-equivalence}\\
  G_{\rho^{(0)}}(K)=\infty
  &\quad\Longleftrightarrow\quad
  G_K^\alpha=\infty
  \text{ for at least one }\alpha\in\mathcal I(\rho_0).
  \label{eq:component-infinite-equivalence}
\end{align}
Thus the full walk is spatially transient if and only if every accessible
component is spatially transient, and it is recurrent at the origin if and
only if at least one accessible component is recurrent there.
\end{corollary}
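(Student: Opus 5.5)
The plan is to read Corollary~\ref{cor:componentwise-potential} off the exact identity \eqref{eq:green-decomposition} of Proposition~\ref{prop:green-decomposition}. That identity is an equality in $[0,\infty]$ between $G_{\rho^{(0)}}(K)$ and a finite sum of nonnegative terms. Two facts will be used.

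\begin{itemize}
\item The index set $\mathcal I(\rho_0)$ is finite, because the recurrent decomposition \eqref{eq:cgh-recurrent-decomposition} has finitely many enclosures in finite dimension.
\item Every weight $a_\alpha$ with $\alpha\in\mathcal I(\rho_0)$ is strictly positive by the definition \eqref{eq:absorption-weights}.
\end{itemize}

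For \eqref{eq:component-finite-equivalence}, note that a finite sum of terms in $[0,\infty]$ is finite exactly when each term is finite. Since $0<a_\alpha<\infty$, we have $a_\alpha G_K^\alpha<\infty$ if and only if $G_K^\alpha<\infty$. The arithmetic convention $0\cdot\infty=0$ is never invoked, because indices with $a_\alpha=0$ were already excluded in \eqref{eq:path-measure-decomposition}.

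Statement \eqref{eq:component-infinite-equivalence} is the logical negation of \eqref{eq:component-finite-equivalence}, since $G$ takes values in $[0,\infty]$.

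For the final sentence, apply \eqref{eq:component-finite-equivalence} to every finite $K$.
\begin{itemize}
\item Suppose every accessible component is spatially transient, meaning $G_K^\alpha<\infty$ for all finite $K$. Then $G_{\rho^{(0)}}(K)<\infty$ for all finite $K$.
\item Conversely, suppose $G_{\rho^{(0)}}(K)<\infty$ for all $K$. Then for each $K$ every $G_K^\alpha$ is finite, so each component is transient.
\end{itemize}
The quantifiers over $K$ and $\alpha$ can be interchanged harmlessly, since each $\alpha$-statement is checked separately for each $K$.

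Recurrence at the origin is the case $K=\{0\}$ of \eqref{eq:component-infinite-equivalence}. Here "component recurrent at the origin" is read as divergence of the occupation series under $\PP^\alpha_{\rho^{(0)}}$. Equivalently, by the $h$-transform realization, it is divergence of the transformed OQRW started from $\widehat\rho_{0,\alpha}\otimes|0\rangle\langle0|$.

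No step is a genuine obstacle. The only point needing care is to state explicitly that $\mathcal I(\rho_0)$ is finite and that the weights are positive.
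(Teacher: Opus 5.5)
Your proposal is correct and matches the paper's proof: both read the equivalences directly off the nonnegative identity \eqref{eq:green-decomposition}, using only that $\mathcal I(\rho_0)$ is finite and that every $a_\alpha$ with $\alpha\in\mathcal I(\rho_0)$ is strictly positive. Your remarks on interchanging the quantifiers over $K$ and $\alpha$, and on taking $K=\{0\}$, make explicit what the paper treats as immediate.
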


\begin{proof}
All weights indexed by \(\mathcal I(\rho_0)\) are strictly positive, and the
number of components is finite.  The assertions follow immediately from the
nonnegative sum \eqref{eq:green-decomposition}.
\end{proof}

The next statement isolates exactly what is needed from component local limit
theory.

\begin{assumption}
\label{ass:component-return}
For every \(\alpha\in\mathcal I(\rho_0)\), let \(r_\alpha\) be the rank of
the effective lattice \(\Gamma_\alpha\) defined above.  Let \(m_\alpha\in\R^d\) be the component law-of-large-numbers drift:
\[
  X_n/n\longrightarrow m_\alpha
  \quad\text{in }\PP^\alpha_{\rho^{(0)}}\text{-probability}.
\]
For the decomposition chosen above these limits follow from the component
limit theorems in \cite[Theorem~3.6, Lemmas~4.2--4.3 and Theorem~4.4]{CGH22}.
Assume the following additional return properties:
\begin{enumerate}[label=\textup{(C\arabic*)},leftmargin=2.6em]
\item if \(r_\alpha=0\), then \(m_\alpha=0\) and \(X_n=0\) almost surely
under \(\PP^\alpha_{\rho^{(0)}}\);
\item if \(r_\alpha\geq1\) and \(m_\alpha\neq0\), then for every finite
\(K\subset\Z^d\) there are \(C_{K,\alpha},c_{K,\alpha}>0\) such that
\[
 \PP^\alpha_{\rho^{(0)}}(X_n\in K)
 \leq C_{K,\alpha}e^{-c_{K,\alpha}n}.
\]
If \(r_\alpha\geq1\) and \(m_\alpha=0\), assume instead that
\[
 \PP^\alpha_{\rho^{(0)}}(X_n\in K)
 \leq C_{K,\alpha}n^{-r_\alpha/2}\qquad(n\geq1)
\]
for every finite \(K\), and, when \(r_\alpha=1,2\), that for some
\(c_\alpha>0\),
\[
 \sum_{n=1}^N\PP^\alpha_{\rho^{(0)}}(X_n=0)
 \sim
 \begin{cases}
 c_\alpha\sqrt N,&r_\alpha=1,\\
 c_\alpha\log N,&r_\alpha=2.
 \end{cases}
\]
\end{enumerate}
\end{assumption}

A component with \(r_\alpha=0\) has no active transformed jump other than
zero, so the first item, including \(m_\alpha=0\), is automatic for a walk
started at the origin.
A sufficient condition for the second item is that the explicitly defined
family \eqref{eq:component-fourier-family} satisfy
Assumption~\ref{ass:fourier}.  Indeed, the component path law is exactly that
of the \(h\)-transformed OQRW, so Theorem~\ref{thm:uniform-llt} applies to it
on \(\Gamma_\alpha\), with ambient drift
\(m_\alpha=\iota_\alpha(\widetilde m_\alpha)\).  The formulation of
Assumption~\ref{ass:component-return} also covers situations where the same
return estimates are established by another method.  No primitivity or
periodicity condition on the original reducible family alone is asserted to
imply the component estimates.

The drift in Assumption~\ref{ass:component-return} is thus the same
\(m_\alpha\) as in the generalized law of large numbers below.  If the
Fourier sufficient condition applies, uniqueness of the probability limit
identifies its spectral drift with this \(m_\alpha\).

\begin{remark}[Limits of the component reduction]
\label{rem:component-scope}
The support of \(A_\alpha\) may include transient internal directions, so
the transformed channel need not be primitive.  Nor does a recurrent
restriction to \(\chi_\alpha\) by itself imply recurrence at the original
site.  For example, on \(\Z^2\) take orthonormal internal vectors \(t,r\)
and
\[
 L_{e_2}=|r\rangle\langle t|,\qquad
 L_0=2^{-1/2}|r\rangle\langle r|,\qquad
 L_{\pm e_1}=\tfrac12|r\rangle\langle r|.
\]
These operators are normalized, and the unique recurrent enclosure is
\(\C r\), with absorption operator \(I\).  Starting from
\(|t\rangle\langle t|\) at the origin, the first step is \(e_2\); all later
positions lie in \(e_2+\Z e_1\).  Consequently the origin Green series is
\(1\), although the walk is recurrent at \(e_2\).  Here the transformed
active-jump lattice has rank two, whereas the asymptotic covariance has
rank one.  This example is excluded by
Assumption~\ref{ass:component-return}; the exact potential decomposition
still holds without that assumption.
\end{remark}

\begin{theorem}
\label{thm:reducible-classification}
Under Assumption~\ref{ass:component-return}, the full walk is spatially
transient if and only if every accessible component satisfies
\begin{equation}\label{eq:component-transient-condition}
  m_\alpha\neq0
  \quad\text{or}\quad
  \bigl(m_\alpha=0\text{ and }r_\alpha\geq3\bigr).
\end{equation}
It is recurrent at the origin if and only if at least one accessible component
satisfies
\begin{equation}\label{eq:component-recurrent-condition}
  m_\alpha=0
  \quad\text{and}\quad
  r_\alpha\in\{0,1,2\}.
\end{equation}
\end{theorem}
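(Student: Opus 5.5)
The plan is to reduce everything to Corollary~\ref{cor:componentwise-potential} and then to classify each accessible component separately using Assumption~\ref{ass:component-return}. Since \(\mathcal I(\rho_0)\) is finite and every \(a_\alpha>0\), the Green decomposition \eqref{eq:green-decomposition} shows two things. The full walk is spatially transient exactly when \(G_K^\alpha<\infty\) for every accessible \(\alpha\) and every finite \(K\). It is recurrent at the origin exactly when \(G_{\{0\}}^\alpha=\infty\) for some accessible \(\alpha\). So it suffices to prove a per-component dichotomy: for each \(\alpha\in\mathcal I(\rho_0)\), either \(G_K^\alpha<\infty\) for all finite \(K\), or \(G_{\{0\}}^\alpha=\infty\). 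The first alternative should hold precisely under \eqref{eq:component-transient-condition}, and the second precisely under \eqref{eq:component-recurrent-condition}.

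For a fixed component I would split into the three cases of Assumption~\ref{ass:component-return}.

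\emph{Case \(r_\alpha=0\).} By (C1), \(X_n=0\) almost surely under \(\PP^\alpha_{\rho^{(0)}}\). Hence \(G_{\{0\}}^\alpha=\sum_n 1=\infty\), and \(m_\alpha=0\).

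\emph{Case \(r_\alpha\ge1\), \(m_\alpha\ne0\).} The exponential bound in (C2) gives \(G_K^\alpha\le \sum_n C_{K,\alpha}e^{-c_{K,\alpha}n}<\infty\) for every finite \(K\).

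\emph{Case \(r_\alpha\ge1\), \(m_\alpha=0\).} The polynomial bound \(C_{K,\alpha}n^{-r_\alpha/2}\) is summable when \(r_\alpha\ge3\), so the component is transient. When \(r_\alpha\in\{1,2\}\), the assumed cumulative asymptotics \(c_\alpha\sqrt N\) or \(c_\alpha\log N\) with \(c_\alpha>0\) force \(G_{\{0\}}^\alpha=\infty\).

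These cases exhaust all components. They also show that \eqref{eq:component-transient-condition} and \eqref{eq:component-recurrent-condition} are exact complements of each other, and that each component falls into exactly one alternative.

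To assemble the result, I first handle the ``if'' direction of the transience statement. If every accessible component satisfies \eqref{eq:component-transient-condition}, each \(G_K^\alpha\) is finite, so \eqref{eq:component-finite-equivalence} gives transience. Conversely, suppose some accessible component fails \eqref{eq:component-transient-condition}. Then it satisfies \eqref{eq:component-recurrent-condition}, so \(G_{\{0\}}^\alpha=\infty\), and \eqref{eq:component-infinite-equivalence} with \(K=\{0\}\) shows that the walk is not transient. The recurrence statement is then the complementary formulation. If some accessible component satisfies \eqref{eq:component-recurrent-condition}, the origin series diverges. If none does, all components are transient, and in particular \(G_{\rho^{(0)}}(\{0\})<\infty\).

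The main point needing care, though it is not deep, is checking that the case split is exhaustive and that the ``recurrent'' alternative really produces divergence at the origin itself rather than at some other site, as in Remark~\ref{rem:component-scope}. This is exactly why the cumulative return hypothesis in (C2) is stated at \(0\). I would also note explicitly that for \(r_\alpha=0\) the walk sits at the origin because it starts from \eqref{eq:localized-initial-state}.
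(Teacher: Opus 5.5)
Your proposal is correct and follows essentially the same route as the paper: classify each accessible component using (C1)--(C2) (trapped at the origin, summable exponential or polynomial bounds, or divergent cumulative returns at $0$), then transfer the result to the full walk through Corollary~\ref{cor:componentwise-potential}. Your explicit check that \eqref{eq:component-transient-condition} and \eqref{eq:component-recurrent-condition} are exact complements (using $m_\alpha=0$ when $r_\alpha=0$) spells out a step the paper leaves implicit.
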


\begin{proof}
For \(r_\alpha\geq1\), summing the exponential or polynomial upper
bounds in Assumption~\ref{ass:component-return} proves transience in the
stated regimes, while its cumulative lower asymptotics prove recurrence at
the origin in centered dimensions one and two.  A
zero-dimensional component is trapped at the origin and is therefore
recurrent.  Corollary~\ref{cor:componentwise-potential} then transfers these
component conclusions to the full walk.
\end{proof}

\begin{theorem}
\label{thm:reducible-speed}
Assume the homogeneous OQRW has finite jump set \(S\), starts from the
localized state \eqref{eq:localized-initial-state}, and satisfies the
empirical-mean conclusion of Carbone--Girotti--Hernandez
\cite[Corollary~4.5]{CGH22}:
\begin{equation}\label{eq:cgh-empirical-mean}
  \frac{X_n}{n}
  \Longrightarrow
  \sum_{\alpha\in\mathcal I(\rho_0)}a_\alpha\delta_{m_\alpha}.
\end{equation}
Then the RMS ballistic speed exists and
\begin{equation}\label{eq:reducible-speed}
  v_{\mathrm{bal}}(\rho^{(0)})^2
  =\sum_{\alpha\in\mathcal I(\rho_0)}
    a_\alpha|m_\alpha|^2.
\end{equation}
In addition,
\begin{equation}\label{eq:averaged-drift}
  \frac{\E X_n}{n}\longrightarrow
  \overline m:=\sum_{\alpha\in\mathcal I(\rho_0)}a_\alpha m_\alpha,
  \qquad
  |\overline m|\leq v_{\mathrm{bal}}(\rho^{(0)}).
\end{equation}
\end{theorem}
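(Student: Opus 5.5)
The plan is to repeat the uniform-integrability argument of Proposition~\ref{prop:speed-from-lln}, this time with a mixture of point masses as the limiting law instead of a single Dirac mass, exactly as in Remark~\ref{rem:ballistic-velocity-laws}. Since \(S\) is finite, the walk has range at most \(R:=\max_{s\in S}|s|\). The walk starts at the origin, so \eqref{eq:pathwise-range-bound} gives \(|X_n/n|\leq R\) almost surely for every \(n\geq1\). Hence the family \(\{|X_n/n|^2:n\geq1\}\), and also \(\{X_n/n\}\), is uniformly bounded. In particular both families are uniformly integrable, and every moment below is finite.

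Next I would pass from weak convergence to convergence of moments. The limit law \(\nu:=\sum_{\alpha\in\mathcal I(\rho_0)}a_\alpha\delta_{m_\alpha}\) is supported in the closed ball \(\{|v|\leq R\}\), since weak limits of laws supported in a closed set stay supported there. Take a bounded continuous function \(f\) on \(\R^d\) that agrees with \(v\mapsto|v|^2\), respectively \(v\mapsto v\) componentwise, on this ball. Applying \eqref{eq:cgh-empirical-mean} to \(f\) gives
\[
\frac{1}{n^2}\E|X_n|^2\longrightarrow\int|v|^2\,\nu(dv)=\sum_{\alpha}a_\alpha|m_\alpha|^2,
\qquad
\frac{\E X_n}{n}\longrightarrow\int v\,\nu(dv)=\overline m .
\]
This step needs no appeal to a general uniform-integrability theorem. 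Taking square roots in the first limit shows that the \(\limsup\) and \(\liminf\) defining \(v_{\mathrm{bal}}\) coincide, which proves \eqref{eq:reducible-speed}.

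The inequality \(|\overline m|\leq v_{\mathrm{bal}}\) then follows in either of two ways. One route is to pass to the limit in \eqref{eq:drift-speed-bound} of Proposition~\ref{prop:elementary-speed}; the first moment is finite by the range bound. The other is to apply Jensen or Cauchy--Schwarz to the probability weights \(a_\alpha\):
\[
\Bigl|\sum_\alpha a_\alpha m_\alpha\Bigr|^2\leq\sum_\alpha a_\alpha|m_\alpha|^2 ,
\]
which uses \(\sum_\alpha a_\alpha=1\) as established before \eqref{eq:cgh-density}.

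I expect no serious obstacle. The one point needing care is that weak convergence alone does not give convergence of second moments (cf.\ Remark~\ref{rem:diffusive-not-ballistic}). Here the deterministic bound \(|X_n|\leq Rn\) supplies the truncation argument, and it depends on the localized initial state. If \(X_0\) were spread out, one would instead use \(|X_n/n|^2\leq 2|X_0|^2+2R^2\) together with \(\E|X_0|^2<\infty\), as in Proposition~\ref{prop:speed-from-lln}.
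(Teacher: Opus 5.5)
Your proposal is correct and follows the paper's proof essentially step for step. Both use the range bound $|X_n/n|\leq R$ to place the limit law in a closed ball, then apply weak convergence to bounded continuous extensions of the coordinate functions and of $|x|^2$, and finish with Jensen's inequality. Your alternative route for $|\overline m|\leq v_{\mathrm{bal}}$ via \eqref{eq:drift-speed-bound} is also valid.
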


\begin{proof}
Let \(R=\max_{s\in S}|s|\).  Since \(X_0=0\), every trajectory satisfies
\[
  \left|\frac{X_n}{n}\right|\leq R.
\]
The limiting measure in \eqref{eq:cgh-empirical-mean} is therefore supported
in the closed ball \(\overline B(0,R)\).  Choose bounded continuous functions
on \(\R^d\) that agree on this ball with the coordinate functions and with
\(x\mapsto|x|^2\), respectively.  Applying weak convergence to these bounded
extensions gives
\[
  \frac{\E X_n}{n}\longrightarrow\sum_\alpha a_\alpha m_\alpha
\]
and
\[
  \frac{1}{n^2}\E|X_n|^2
  \longrightarrow\sum_\alpha a_\alpha|m_\alpha|^2.
\]
Taking square roots proves \eqref{eq:reducible-speed}.  The last inequality in
\eqref{eq:averaged-drift} is Jensen's inequality.
\end{proof}

Together, Proposition~\ref{prop:green-decomposition} and
Theorem~\ref{thm:reducible-speed} give the two aggregation laws illustrated
in Figure~\ref{fig:oqrw-reducible-decomposition}.

\FloatBarrier
\begin{figure}[t]
\centering
\resizebox{\textwidth}{!}{%
\begin{tikzpicture}
  \node[oqrw blue,text width=5.8cm] (initial) at (0,0) {
    \textbf{Initial internal state $\rho^{(0)}$}\\
    reducible local channel
  };

  \node[oqrw gray,text width=3.5cm] (c1) at (-4.7,-2.15) {
    component $1$\\
    weight $a_1$\\
    drift $m_1$
  };
  \node[oqrw gray,text width=3.5cm] (ca) at (0,-2.15) {
    component $\alpha$\\
    weight $a_\alpha$\\
    drift $m_\alpha$
  };
  \node[oqrw gray,text width=3.5cm] (cq) at (4.7,-2.15) {
    component $q$\\
    weight $a_q$\\
    drift $m_q$
  };

  \node[oqrw gray,text width=10.8cm] (data) at (0,-4.15) {
    \textbf{Accessible component data}\\[-1pt]
    $\bigl\{a_\alpha,m_\alpha,G_K^\alpha\bigr\}_\alpha$,
    \qquad $\sum_\alpha a_\alpha=1$
  };

  \node[oqrw green,text width=6.4cm] (green) at (-3.55,-6.35) {
    \textbf{Linear occupation decomposition}\\[2pt]
    $\displaystyle
      G_{\rho^{(0)}}(K)=\sum_\alpha a_\alpha G_K^\alpha$
  };

  \node[oqrw gold,text width=6.4cm] (speed) at (3.55,-6.35) {
    \textbf{Quadratic transport aggregation}\\[2pt]
    $\displaystyle
      v_{\mathrm{bal}}^2=\sum_\alpha
      a_\alpha|m_\alpha|^2$
  };

  \node[oqrw red,text width=9.2cm] (separation) at (0,-8.85) {
    \textbf{Recurrence and RMS speed can separate}\\[-1pt]
    one recurrent component of positive weight forces recurrence, while
    other components may still produce a positive RMS speed
  };

  \draw[oqrw arrow] (initial) -- node[above left,font=\small] {$h$-transform} (c1);
  \draw[oqrw arrow] (initial) -- node[right,font=\small] {$h$-transform} (ca);
  \draw[oqrw arrow] (initial) -- node[above right,font=\small] {$h$-transform} (cq);

  \foreach \c in {c1,ca,cq}{
    \draw[oqrw arrow] (\c) -- (data);
  }
  \draw[oqrw arrow] (data) -- (green);
  \draw[oqrw arrow] (data) -- (speed);
  \draw[oqrw arrow] (green) -- (separation);
  \draw[oqrw arrow] (speed) -- (separation);
\end{tikzpicture}%
}
\caption{Two different aggregation laws in the reducible regime.  Green
occupation potentials decompose linearly, whereas RMS transport averages the
squared component drifts.  A drift--dimension criterion for the component
Green series additionally requires Assumption~\ref{ass:component-return}.}
\label{fig:oqrw-reducible-decomposition}
\end{figure}
\FloatBarrier

\begin{remark}
\label{rem:positive-speed-reducible}
Equation \eqref{eq:reducible-speed} shows that positive RMS speed means only
that at least one accessible component has nonzero drift.  Transience requires
\emph{every} accessible component to be transient.  A recurrent centered
component of positive absorption weight therefore makes the full walk
recurrent even when another component is ballistic and the RMS speed is
positive.  Also, cancellation between component drifts can make
\(\overline m=0\) while \(v_{\mathrm{bal}}>0\).
\end{remark}

\section{Exact trapping regions and sparse barriers}
\label{sec:traps}

The preceding sections concern homogeneous asymptotic regimes.  Exact traps
are instead most naturally formulated for the general TOM of
Section~\ref{sec:setup}.

\begin{definition}
A set \(F\subset\Z^d\) is \emph{forward invariant} if
\begin{equation}\label{eq:no-escape}
  B_j^i=0
  \qquad(j\in F,\ i\notin F).
\end{equation}
It is \emph{reducing} if, in addition,
\begin{equation}\label{eq:no-inflow}
  B_j^i=0
  \qquad(j\notin F,\ i\in F).
\end{equation}
\end{definition}

\begin{proposition}
\label{prop:finite-trap}
Let \(F\subset\Z^d\) be nonempty, finite, and forward invariant.  Then:
\begin{enumerate}[label=\textup{(\roman*)},leftmargin=2.4em]
\item every state initially supported in \(F\) remains supported in \(F\);
\item for every \(n\geq0\),
\begin{equation}\label{eq:trap-compression}
  P_FT^n(P_F)P_F=P_F;
\end{equation}
\item \(P_F\) is recurrent in the TOM sense;
\item every initial state supported in \(F\) has RMS ballistic speed zero.
\end{enumerate}
If \(F\) is reducing, then
\begin{equation}\label{eq:trap-harmonic}
  T(P_F)=P_F.
\end{equation}
\end{proposition}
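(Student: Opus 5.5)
The plan is to work entirely from the support relation in \eqref{eq:general-oqrw} and the dual formula \eqref{eq:dual-TOM}, together with the column normalization \eqref{eq:column-normalization}.

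\emph{Step (i).} If \(\rho_j=0\) for \(j\notin F\), then the block of \(M(\rho)\) at \(i\notin F\) is \(\sum_{j\in F}B_j^i\rho_jB_j^{i*}=0\), by \eqref{eq:no-escape}. Induction on \(n\) shows that \(M^n(\rho)\) stays supported in \(F\).

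\emph{Step (ii).} Compute \(T(P_F)\) block by block. By \eqref{eq:dual-TOM}, its block at \(j\) is \(\sum_{i\in F}B_j^{i*}B_j^i\). For \(j\in F\), the terms with \(i\notin F\) vanish by \eqref{eq:no-escape}, so \eqref{eq:column-normalization} gives the block \(I_{\cH}\). Hence \(T(P_F)\geq P_F\), and \(P_FT(P_F)P_F=P_F\).

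To iterate, use positivity: \(T^{n}(P_F)\geq T^{n-1}(P_F)\geq\cdots\geq P_F\). Combined with \(T^n(P_F)\leq T^n(I)=I\) and unitality, this gives \(P_F\leq P_FT^n(P_F)P_F\leq P_F\), which is \eqref{eq:trap-compression}.

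Alternatively, \eqref{eq:trap-compression} follows from (i) and duality. For a state \(\rho\) supported in \(F\), we have \(\Tr(\rho T^n(P_F))=\Tr(M^n(\rho)P_F)=\Tr(\rho)\). Together with \(0\leq P_FT^n(P_F)P_F\leq P_F\), this forces equality. I would present the monotonicity argument and mention the duality argument as a check.

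\emph{Step (iii).} For \(0\neq u\in\Ran(P_F)\), we have \(\ip{u}{T^n(P_F)u}=\ip{u}{P_FT^n(P_F)P_Fu}=\norm{u}^2\) for every \(n\). The series therefore diverges, which is TOM recurrence in the sense of Definition~\ref{def:spatial-recurrence}.

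\emph{Step (iv).} By (i), the position law at every time is supported in the finite set \(F\). Hence \(\E|X_n|^2\leq\max_{x\in F}|x|^2\). The finite second moment assumption \eqref{eq:finite-second-moment} holds trivially, and \(n^{-1}(\E|X_n|^2)^{1/2}\to0\). So \(\overline v_{\mathrm{bal}}=\underline v_{\mathrm{bal}}=0\).

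\emph{Reducing case.} For \(j\notin F\), the block of \(T(P_F)\) at \(j\) is \(\sum_{i\in F}B_j^{i*}B_j^i=0\) by \eqref{eq:no-inflow}. Combined with the computation in Step (ii) for \(j\in F\), this gives \(T(P_F)=P_F\), which is \eqref{eq:trap-harmonic}.

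The only point needing care is the infinite sums in the dual formula. They are handled by interpreting them as monotone ultraweak limits, as in Section~\ref{sec:setup}, since all the relevant terms are positive.
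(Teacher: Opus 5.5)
Your proposal is correct and follows the paper's proof essentially step for step: the blockwise computation of \(T(P_F)\) from the no-escape condition and the column normalization, the sandwich \(P_F\le T^n(P_F)\le I\) obtained from positivity and unitality followed by compression with \(P_F\), the constant value \(\norm{u}^2\) for TOM recurrence, the bounded-position argument for zero speed, and the vanishing of the blocks outside \(F\) in the reducing case. Your duality cross-check of \eqref{eq:trap-compression} via part (i) is also valid, but the paper relies only on the monotonicity argument.
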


\begin{proof}
For a source site \(j\in F\), the block of \(T(P_F)\) at \(j\) is
\[
  \sum_{i\in F}B_j^{i*}B_j^i.
\]
By \eqref{eq:no-escape} and the normalization
\eqref{eq:column-normalization}, this sum is \(I_{\cH}\).  Hence
\[
  P_FT(P_F)P_F=P_F.
\]
The blocks of \(T(P_F)\) outside \(F\) are positive, so in fact
\(T(P_F)\geq P_F\).  Positivity of \(T\) gives
\(T^n(P_F)\geq P_F\) for every \(n\), while unitality gives
\(T^n(P_F)\leq I\).  Compression by \(P_F\) proves
\eqref{eq:trap-compression}.  In the Schr\"odinger picture, the no-escape
condition directly proves preservation of support.

For \(0\neq u\in\Ran(P_F)\), \eqref{eq:trap-compression} gives
\[
  \ip{u}{T^n(P_F)u}=\norm{u}^2
  \qquad(n\geq0).
\]
The sum over \(n\) diverges, so \(P_F\) is recurrent in the TOM sense.  If the
initial position is supported in \(F\), then \(|X_n|\leq\max_{x\in F}|x|\),
which gives zero RMS speed.

Finally, under \eqref{eq:no-inflow}, the blocks of \(T(P_F)\) outside \(F\)
vanish as well, proving \eqref{eq:trap-harmonic}.
\end{proof}

\begin{remark}
Forward invariance gives recurrence of the \emph{set projection} \(P_F\), but
it does not imply recurrence of every individual site in \(F\).  For example,
the dynamics inside \(F\) may contain a transient initial segment leading to
a smaller absorbing subset.  Additional irreducibility inside the trap would
be needed for sitewise conclusions.
\end{remark}

\begin{theorem}
\label{thm:finite-cell-partition}
Assume the OQRW has finite range.  Suppose
\begin{equation}\label{eq:cell-partition}
  \Z^d=\bigsqcup_{\beta\in\mathcal B}F_\beta
\end{equation}
is a partition into finite forward-invariant cells.  Then, for every initial
state with finite position second moment,
\begin{equation}\label{eq:cell-zero-speed}
  v_{\mathrm{bal}}(\rho^{(0)})=0.
\end{equation}
Each projection \(P_{F_\beta}\) is recurrent in the TOM sense.  No uniform
bound on the diameters of the cells is required.
\end{theorem}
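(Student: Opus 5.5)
The TOM recurrence of each \(P_{F_\beta}\) is immediate: every cell is nonempty, finite and forward invariant, so Proposition~\ref{prop:finite-trap}\,(iii) applies cell by cell. Note that each cell is in fact reducing, since every site lies in exactly one cell and no cell leaks. This is not needed for recurrence, however. The substance of the theorem is the zero-speed claim for an arbitrary initial state with finite second moment, which may be spread over infinitely many cells of unbounded diameter.

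For the speed, I will decompose the initial state by cells. Set \(w_\beta:=\sum_{x\in F_\beta}\Tr(\rho^{(0)}_x)\). By forward invariance, Proposition~\ref{prop:finite-trap}\,(i) shows that the mass initially in \(F_\beta\) stays in \(F_\beta\) forever. Linearity of \(M^n\) then gives
\[
  \E|X_n|^2=\sum_\beta w_\beta\,\E_{\beta}|X_n|^2,
\]
where \(\E_\beta\) denotes expectation for the normalized restriction of \(\rho^{(0)}\) to \(F_\beta\). This is the countable version of Proposition~\ref{prop:elementary-speed}\,(iv); it follows from the monotone truncation \eqref{eq:truncated-position} and Tonelli. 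Pathwise, the trajectory started in \(F_\beta\) satisfies \(|X_n|\le D_\beta:=\max_{x\in F_\beta}|x|\). Combined with the range bound \eqref{eq:pathwise-range-bound}, this yields
\[
  |X_n|^2\le \min\{D_\beta^2,\;2|X_0|^2+2R^2n^2\}.
\]

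Next I divide by \(n^2\) and pass to the limit by dominated convergence over the mixture in \(\beta\) and \(X_0\). For each fixed \(\beta\), \(D_\beta^2/n^2\to0\). The dominating function is \(2|X_0|^2+2R^2\) for \(n\ge1\), and it is integrable by \eqref{eq:finite-second-moment}. Hence
\[
  \frac1{n^2}\E|X_n|^2=\sum_\beta w_\beta\,\E_\beta\!\left[\frac{|X_n|^2}{n^2}\right]\longrightarrow0,
\]
so \(\overline v_{\mathrm{bal}}=\underline v_{\mathrm{bal}}=0\). The same argument can be phrased on the trajectory space: \(|X_n|\) is bounded by the radius of the cell containing \(X_0\), which is an almost surely finite random variable, and uniform integrability comes from \eqref{eq:pathwise-range-bound}.

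I expect the main obstacle to be the interchange of limit and infinite sum when the diameters \(D_\beta\) are unbounded, since no uniform bound on \(D_\beta\) is available. This is exactly where finite range and the finite second moment enter, through the dominating function \(2|X_0|^2+2R^2\) supplied by \eqref{eq:pathwise-range-bound}. The remaining care is bookkeeping: I must justify the cell decomposition of \(\E|X_n|^2\) rigorously via \eqref{eq:truncated-position} and monotone convergence, and handle cells of zero weight by omitting them.
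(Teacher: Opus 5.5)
Your proposal is correct and follows essentially the paper's argument. The cell containing $X_0$ bounds the trajectory, so $X_n/n\to0$ pointwise, and the finite-range bound $2|X_0|^2+2R^2$ supplies the integrable dominating function. TOM recurrence of each $P_{F_\beta}$ comes directly from Proposition~\ref{prop:finite-trap}. Your explicit cell-by-cell decomposition of $\E|X_n|^2$ is extra bookkeeping that the paper skips by working on trajectory space, exactly as in your closing remark.
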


\begin{proof}
For each trajectory, the cell containing \(X_0\) is finite and remains
invariant.  Hence \((X_n)_{n\geq0}\) is bounded along that trajectory and
\[
  \frac{X_n}{n}\longrightarrow0
  \qquad\text{almost surely}.
\]
If the range is at most \(R\), then for \(n\geq1\),
\[
  \left|\frac{X_n}{n}\right|^2
  \leq\frac{2|X_0|^2}{n^2}+2R^2
  \leq2|X_0|^2+2R^2.
\]
The right-hand side is integrable.  Dominated convergence therefore gives
\[
  \frac{1}{n^2}\E|X_n|^2\longrightarrow0,
\]
which proves \eqref{eq:cell-zero-speed}.  Recurrence of each cell projection
follows from Proposition~\ref{prop:finite-trap}.
\end{proof}

\begin{example}
\label{ex:sparse-barriers}
Let \((b_k)_{k\in\Z}\) be a strictly increasing sequence of integers such that
\(b_k\to\pm\infty\) as \(k\to\pm\infty\).  The gaps
\(b_{k+1}-b_k\) may be unbounded.  Put
\[
  F_k:=\{b_k,b_k+1,\ldots,b_{k+1}-1\}.
\]
On the scalar internal space \(\cH=\C\), choose numbers
\(p_j^+,p_j^-\geq0\) with \(p_j^++p_j^-\leq1\), subject to
\begin{align*}
  p_{b_k}^-&=0,\\
  p_{b_{k+1}-1}^+&=0
\end{align*}
for every \(k\).  Define the only nonzero transition amplitudes by
\begin{equation}\label{eq:sparse-barrier-amplitudes}
  B_j^{j+1}=\sqrt{p_j^+},\qquad
  B_j^{j-1}=\sqrt{p_j^-},\qquad
  B_j^j=\sqrt{1-p_j^+-p_j^-}.
\end{equation}
At a one-point cell both outward probabilities are set to zero.  The
normalization is automatic, and every \(F_k\) is forward invariant.  Thus the
barriers between \(b_k-1\) and \(b_k\) are exact reflectors.  Even when the
barriers become arbitrarily sparse, Theorem~\ref{thm:finite-cell-partition}
shows that every finite-second-moment initial state has zero RMS ballistic
speed, and every cell projection is TOM-recurrent.
\end{example}

\section{Examples separating speed and recurrence}
\label{sec:examples}

The examples below are deliberately elementary.  Their purpose is to show
that recurrence cannot be read from a single second-moment growth rate.
Example~\ref{ex:centered-noncommuting} already gives centered recurrent and
transient noncommuting models within Assumption~\ref{ass:fourier}.  The
first example below illustrates why its restriction on stationary Fourier
modes must still be checked separately, even for a primitive local channel.

\begin{example}
\label{ex:primitive-zero-speed-transient}
Let \(\cH=\C^2\), and let \(\sigma_x,\sigma_y,\sigma_z\) be the Pauli
matrices.  On \(\Z^3\), set
\[
  L_0=\sqrt{\frac12}\,I,
\]
and
\[
  L_{\pm e_1}=\sqrt{\frac1{12}}\,\sigma_x,
  \qquad
  L_{\pm e_2}=\sqrt{\frac1{12}}\,\sigma_y,
  \qquad
  L_{\pm e_3}=\sqrt{\frac1{12}}\,\sigma_z.
\]
The normalization \(\sum_sL_s^*L_s=I\) is immediate.  The local channel is
primitive: writing \(X=aI+r\cdot\sigma\), one obtains
\[
  \cL(X)=aI+\frac13r\cdot\sigma,
\]
which sends every nonzero positive operator to a positive-definite one.

This example is primitive but does not satisfy all of
Assumption~\ref{ass:fourier}.  In the Pauli basis the four Fourier
eigenvalues, with \(c_j=\cos\theta_j\), are
\begin{gather*}
 \tfrac12+\tfrac16(c_1+c_2+c_3),\qquad
 \tfrac12+\tfrac16(c_1-c_2-c_3),\\
 \tfrac12+\tfrac16(-c_1+c_2-c_3),\qquad
 \tfrac12+\tfrac16(-c_1-c_2+c_3).
\end{gather*}
Thus the peripheral set consists of \(0,(0,\pi,\pi),(\pi,0,\pi),
(\pi,\pi,0)\), all with eigenvalue \(1\).  The nonzero stationary modes
act on traceless Pauli matrices, so they disappear when taking the trace.
The position law can therefore be analyzed directly, despite the failure of
the stationary-mode condition in \textup{(A2)}.

Each \(L_s\) is a scalar multiple of a unitary, so the jump probability is
independent of the internal state.  The position process is exactly the lazy
simple symmetric random walk with
\[
  \PP(\xi=0)=\frac12,
  \qquad
  \PP(\xi=\pm e_j)=\frac1{12}.
\]
Thus \(X_n=\xi_1+\cdots+\xi_n\) with independent increments,
\[
  \E X_n=0,
  \qquad
  \E|X_n|^2=\frac n2,
  \qquad
  v_{\mathrm{bal}}=0.
\]
Its one-step characteristic function is
\[
  \phi(\theta)
  =\frac12+\frac16
    \bigl(\cos\theta_1+\cos\theta_2+\cos\theta_3\bigr).
\]
The usual Fourier estimate gives
\[
  \sup_{x\in\Z^3}\PP(X_n=x)\leq Cn^{-3/2}.
\]
Hence every finite-set occupation series is summable.  This is a primitive,
full-dimensional OQRW with zero RMS speed that is spatially transient.
\end{example}

\begin{example}
\label{ex:embedded-line}
Take \(\cH=\C\) and
\[
  L_0=\frac1{\sqrt2},
  \qquad
  L_{e_1}=L_{-e_1}=\frac12,
\]
with all other jump operators zero.  The ambient space is \(\Z^3\), but the
effective lattice is \(\Z e_1\), of rank one.  The position process is the
lazy centered walk on that line, with
\[
  \E|X_n|^2=\frac n2,
  \qquad
  v_{\mathrm{bal}}=0.
\]
The one-dimensional local limit theorem gives
\[
  \PP(X_n=0)\sim\frac1{\sqrt{\pi n}},
\]
so the origin occupation series diverges.  This example shows both that zero
speed does not imply transience and that the ambient dimension is irrelevant
when the effective dimension is smaller.
\end{example}

\begin{example}
\label{ex:opposite-drifts}
Let \(\cH=\C^2\) with orthogonal rank-one projections \(P_+\) and \(P_-\),
and define on \(\Z\)
\[
  L_{+1}=P_+,
  \qquad
  L_{-1}=P_-.
\]
For the initial internal state
\[
  \rho_0=pP_++(1-p)P_-,
\]
the trajectory is \(X_n=n\) with probability \(p\) and \(X_n=-n\) with
probability \(1-p\).  Therefore
\[
  \frac{\E X_n}{n}=2p-1,
  \qquad
  \frac{\E X_n^2}{n^2}=1,
  \qquad
  v_{\mathrm{bal}}=1.
\]
For \(p=1/2\), the averaged drift is zero although the RMS speed is one.  The
two component drifts are \(+1\) and \(-1\), so
\eqref{eq:reducible-speed} gives the correct result and the walk is transient.
\end{example}

\begin{example}
\label{ex:positive-speed-recurrent}
Let \(\cH=\C^2\) with complementary orthogonal rank-one projections
\(P_b\) and \(P_r\).  Define
\[
  L_{+1}=P_b+\frac12P_r,
  \qquad
  L_{-1}=\frac12P_r,
  \qquad
  L_0=\frac1{\sqrt2}P_r.
\]
Then \(\sum_sL_s^*L_s=I\).  The \(P_b\)-component moves deterministically to
the right, while the \(P_r\)-component is the lazy centered walk with jump
probabilities \(1/4,1/2,1/4\).  Starting from
\[
  \rho_0=pP_b+(1-p)P_r,
  \qquad 0<p<1,
\]
we have
\begin{equation}\label{eq:positive-speed-recurrent-moment}
  \E X_n^2
  =pn^2+(1-p)\frac n2,
  \qquad
  v_{\mathrm{bal}}=\sqrt p>0.
\end{equation}
On the other hand, for \(n\geq1\),
\[
  \PP(X_n=0)
  =(1-p)\PP(S_n=0)
  \sim\frac{1-p}{\sqrt{\pi n}},
\]
where \(S_n\) is the lazy centered component.  The origin Green series
therefore diverges.  A positive RMS speed does not imply transience in a
reducible OQRW.  Here \(P_0\) is not TOM-recurrent: its potential on the
ballistic internal vector at the origin equals \(1\).  This distinguishes
recurrence from the specified mixed state from recurrence of the projection.
\end{example}

\section{Conclusion}
\label{sec:conclusion}
In homogeneous finite-range OQRWs, RMS ballistic speed and potential-theoretic recurrence represent separate elements of dynamics.  In a one-component regime that satisfies Assumption~\ref{ass:fourier}, the drift norm determines speed, whereas drift and effective dimension determine recurrence.  In the centered low-dimensional region, the cumulative local theorem confirms that \(P_0\) is TOM-recurrent.  For centered walks in at least three dimensions, the Green function has the anisotropic Newtonian profile \((x^T\Sigma^{-1}x)^{1-r/2}\). In dimensions one and two, the potential kernels have linear and logarithmic growth with explicit covariance constants.  In the reducible regime, the contrast is clearer. Green occupation potentials are decomposed componentwise, but the squared RMS speed is a weighted average of component drifts.  Exact finite traps provide a nonhomogeneous mechanism that enforces zero speed and TOM repetition without any mixing assumptions.

To classify reducible drift-dimensions, Assumption~\ref{ass:component-return} needs additional component return estimations.
Without local estimates, the absorption decomposition and RMS identity are still valid.
Example~\ref{ex:centered-noncommuting} provides a centered noncommuting family that meets all basic Fourier hypotheses in every dimension, including explicit Green-function and potential-kernel constants.

The phrase RMS ballistic speed relates to the uncentered second instant.  \eqref{eq:centered-ballistic-coefficient} quantifies deterministic drift in the one-component domain. Coherent unitary walks may have a nondegenerate ballistic velocity law, whereas reducible OQRWs can have a mixture of component velocities.  These processes share a macroscopic size, but do not have interchangeable recurrence requirements.

The key advancement is the transition from global scaling data to potential-theoretic categorization.  The OQRW laws of big numbers and central limit theorems may detect drifts and Gaussian fluctuations, whereas the generic TOM theory provides abstract recurrence potentials.  The periodic uniform local theorem links the two without suppressing unit-modulus Fourier modes. The explicit mask specifies the leading return amplitudes in each time residue class, while the mean-one identity produces sharp cumulative return asymptotics.  Resolvent analysis determines the spatial asymptotic profile of the potential.  These findings confirm the summability of finite-set occupations, emphasize the importance of effective dimension over ambient dimension, and support an accurate absorption decomposition in the reducible situation.  The component \(h\)-transform and paired formulae for Green potentials and RMS speed explain how transport and recurrence may concur in primitive regimes but split entirely after reduction.

The assumptions of the local limit theorem are deliberately clear.  The authors discuss local-channel mixing, the small and simple Fourier peripheral spectrum, the lack of extra stationary Fourier modes, and covariance nondegeneracy.  The theorem includes finite periodic modes despite significant aperiodicity.  Models that do not meet these assumptions need a separate return analysis. The active-jump lattice and component \(h\)-transform do not automatically provide the required hypotheses.

\section*{Funding}
A. Dhahri is a member of GNAMPA-INdAM and he has been supported by the MUR grant Dipartimento di Eccellenza 2023-2027 of
Dipartimento di Matematica, Politecnico di Milano. The work of H. J. Yoo was supported by a grant from the National Research Foundation of Korea (NRF), funded by the Korean government (MSIT) (IRIS RS-2026-25488431).

% The authors should confirm this declaration before submission.
\section*{Declaration of competing interest}
The authors declare that they have no known competing financial interests or
personal relationships that could have appeared to influence the work
reported in this paper.

\section*{Data availability}
No data were used for the research described in this article.


\begin{thebibliography}{99}

\bibitem{AhlbrechtEtAl11}
A. Ahlbrecht, H. Vogts, A. H. Werner, and R. F. Werner,
\emph{Asymptotic evolution of quantum walks with random coin},
J. Math. Phys. \textbf{52} (2011), Article 042201.
\href{https://doi.org/10.1063/1.3575568}{doi:10.1063/1.3575568}.

\bibitem{AGPS15}
S. Attal, N. Guillotin-Plantard, and C. Sabot,
\emph{Central limit theorems for open quantum random walks and quantum
measurement records},
Ann. Henri Poincar\'e \textbf{16} (2015), 15--43.

\bibitem{APSS12}
S. Attal, F. Petruccione, C. Sabot, and I. Sinayskiy,
\emph{Open quantum random walks},
J. Stat. Phys. \textbf{147} (2012), 832--852.

\bibitem{BrunCarteretAmbainis03}
T. A. Brun, H. A. Carteret, and A. Ambainis,
\emph{Quantum random walks with decoherent coins},
Phys. Rev. A \textbf{67} (2003), Article 032304.
\href{https://doi.org/10.1103/PhysRevA.67.032304}{doi:10.1103/PhysRevA.67.032304}.

\bibitem{CarbonePautrat15}
R. Carbone and Y. Pautrat,
\emph{Homogeneous open quantum random walks on a lattice},
J. Stat. Phys. \textbf{160} (2015), 1125--1153.
\href{https://doi.org/10.1007/s10955-015-1261-6}{doi:10.1007/s10955-015-1261-6}.

\bibitem{CarbonePautrat16}
R. Carbone and Y. Pautrat,
\emph{Open quantum random walks: reducibility, period, ergodic properties},
Ann. Henri Poincar\'e \textbf{17} (2016), 99--135.

\bibitem{CarvalhoGuidiLardizabal17}
S. L. Carvalho, L. F. Guidi, and C. F. Lardizabal,
\emph{Site recurrence of open and unitary quantum walks on the line},
Quantum Inf. Process. \textbf{16} (2017), Article No.~17.

\bibitem{CGH22}
R. Carbone, F. Girotti, and A. M. Hernandez,
\emph{On a generalized central limit theorem and large deviations for
homogeneous open quantum walks},
J. Stat. Phys. \textbf{188} (2022), Paper No.~8.

\bibitem{CedzichEtAl26}
C. Cedzich, A. Joye, A. H. Werner, and R. F. Werner,
\emph{Exponential tail estimates for quantum lattice dynamics},
Ann. Henri Poincar\'e \textbf{27} (2026), 2843--2870.
\href{https://doi.org/10.1007/s00023-025-01598-4}{doi:10.1007/s00023-025-01598-4}.

\bibitem{DhahriEtAl26}
A. Dhahri, C. K. Ko, F. Mukhamedov, and H. J. Yoo,
\emph{Potential theory for quantum random walks associated with transition
operation matrices},
J. Math. Anal. Appl. \textbf{562} (2026), Article 130737.

\bibitem{DhahriFagnola23}
A. Dhahri and F. Fagnola,
\emph{Potential theory for quantum Markov states and other quantum Markov
chains},
Anal. Math. Phys. \textbf{13} (2023), Article No.~31.

\bibitem{DhahriKoYoo19}
A. Dhahri, C. K. Ko, and H. J. Yoo,
\emph{Quantum Markov chains associated with open quantum random walks},
J. Stat. Phys. \textbf{176} (2019), 1272--1295.

\bibitem{DhahriMukhamedov19}
A. Dhahri and F. Mukhamedov,
\emph{Open quantum random walks, quantum Markov chains and recurrence},
Rev. Math. Phys. \textbf{31} (2019), Article 1950020.

\bibitem{DM19}
A. Dhahri and F. Mukhamedov,
\emph{Open quantum random walk and quantum Markov chains},
Funct. Anal. Appl. 53 (2019), no. 2, 137-142.


\bibitem{EvansHoeghKrohn78}
D. E. Evans and R. H\o egh-Krohn,
\emph{Spectral properties of positive maps on \(C^*\)-algebras},
J. London Math. Soc. (2) \textbf{17} (1978), 345--355.

\bibitem{GrimmettJansonScudo04}
G. Grimmett, S. Janson, and P. F. Scudo,
\emph{Weak limits for quantum random walks},
Phys. Rev. E \textbf{69} (2004), Article 026119.
\href{https://doi.org/10.1103/PhysRevE.69.026119}{doi:10.1103/PhysRevE.69.026119}.

\bibitem{GrunbaumLardizabalVelazquez20}
F. A. Gr\"unbaum, C. F. Lardizabal, and L. Vel\'azquez,
\emph{Quantum Markov chains: recurrence, Schur functions and splitting
rules},
Ann. Henri Poincar\'e \textbf{21} (2020), 189--239.

\bibitem{Gudder08}
S. Gudder,
\emph{Quantum Markov chains},
J. Math. Phys. \textbf{49} (2008), Article 072105.

\bibitem{JacqLardizabal21}
T. S. Jacq and C. F. Lardizabal,
\emph{Homogeneous open quantum walks on the line: criteria for site
recurrence and absorption},
Quantum Inf. Comput. \textbf{21} (2021), 37--58.

\bibitem{Kato95}
T. Kato,
\emph{Perturbation Theory for Linear Operators},
Classics in Mathematics, Springer, Berlin, 1995.

\bibitem{KKSY19}
C. K. Ko, N. Konno, E. Segawa, and H. J. Yoo,
\emph{Central limit theorems for open quantum random walks on the crystal
lattices},
J. Stat. Phys. \textbf{176} (2019), 710--735.

\bibitem{Konno05}
N. Konno,
\emph{A new type of limit theorems for the one-dimensional quantum random
walk},
J. Math. Soc. Japan \textbf{57} (2005), 1179--1195.
\href{https://doi.org/10.2969/jmsj/1150287309}{doi:10.2969/jmsj/1150287309}.

\bibitem{KonnoYoo13}
N. Konno and H. J. Yoo,
\emph{Limit theorems for open quantum random walks},
J. Stat. Phys. \textbf{150} (2013), 299--319.

\bibitem{MukhamedovSouissiHamdi22}
F. Mukhamedov, A. Souissi, and T. Hamdi,
\emph{Open quantum random walks and quantum Markov chains on trees I: phase
transitions},
Open Syst. Inf. Dyn. \textbf{29} (2022), Article 2250003.

\bibitem{MukhamedovSouissiHamdiAndolsi23}
F. Mukhamedov, A. Souissi, T. Hamdi, and A. A. Andolsi,
\emph{Open quantum random walks and quantum Markov chains on trees II: the
recurrence},
Quantum Inf. Process. \textbf{22} (2023), Article No.~232.

\end{thebibliography}
\end{document}